\DeclareMathSymbol{\invques}{\mathord}{operators}{`>}
\DeclareRobustCommand{\tmquestiondown}{%
  \ifmmode\invques\else\textquestiondown\fi
}
\numberwithin{equation}{section}
\newcommand{\mylabel}[2]{#2\def\@currentlabel{#2}\label{#1}}
\newtheorem{theorem}{Theorem}[section]
\newtheorem{lemma}[theorem]{Lemma}
\newtheorem{conj}[theorem]{Conjecture}
\newtheorem{proposition}[theorem]{Proposition}
\newtheorem{corollary}[theorem]{Corollary}
\newtheorem{defn}[theorem]{Definition}
\newtheorem{example}[theorem]{Example}
\newtheorem{remark}[theorem]{Remark}
\newcommand{\de}{\mathrm{def}}
\newcommand{\inde}{\mathrm{indef}}
\newcommand{\fR}{\mathfrak{R}}
\newcommand{\fu}{\mathfrak{u}}
\newcommand{\fv}{\mathfrak{v}}
\newcommand{\Thn}{T_{h,n}}
\newcommand{\JL}{\mathrm{JL}}
\newcommand{\Tr}{\operatorname{Tr}}
\newcommand{\Gal}{\operatorname{Gal}}
\newcommand{\CC}{\mathbb{C}}
\newcommand{\QQ}{\mathbb{Q}}
\newcommand{\Qp}{\mathbb{Q}_p}
\newcommand{\Zp}{\mathbb{Z}_p}
\newcommand{\ZZ}{\mathbb{Z}}
\newcommand{\fin}{\f}
\newcommand{\sing}{\mathrm{sing}}
\newcommand{\Tfn}{T_{f,n}}
\newcommand{\Kml}{K_{m,\ell}}
\newcommand{\FF}{\mathbb{F}}
\newcommand{\FFF}{\mathcal{F}}
\newcommand{\ord}{\mathrm{ord}}
\newcommand{\one}{\mathbbm{1}}
\newcommand{\fp}{\mathfrak{p}}
\newcommand{\cH}{\mathcal{H}}
\newcommand{\cO}{\mathcal{O}}
\newcommand{\HIw}{H^1_{\mathrm{Iw}}}
\newcommand{\GL}{\mathrm{GL}}
\newcommand{\col}{\mathrm{Col}}
\newcommand{\fL}{\mathfrak{L}}
\newcommand{\Sel}{\mathrm{Sel}}
\newcommand{\Char}{\mathrm{char}}
\newcommand{\Ind}{\mathrm{Ind}}
\newcommand{\ac}{\textup{ac}}
\newcommand{\Afn}{A_{f,n}}
\newcommand{\LL}{\Lambda}
\newcommand{\TT}{\mathbb{T}}
\newcommand{\f}{\textup{\bf f}}
\newcommand{\lra}{\longrightarrow}
\newcommand{\ra}{\lra}
\newcommand{\res}{\textup{res}}
\newcommand{\ur}{\textup{ur}}
\newcommand{\fP}{\mathfrak{P}}
\newcommand{\cF}{\mathcal{F}}
\newcommand{\fM}{\m}
\newcommand{\cor}{\mathrm{cor}}
\definecolor{Green}{rgb}{0.0, 0.5, 0.0}
\newcommand{\p}{\mathfrak{p}}
\newcommand{\m}{\mathfrak{m}}
\newcommand{\cG}{\mathcal{G}}
\newcommand{\bd}{\mathbb{d}}
\newcommand{\fN}{\mathfrak{N}}
\newcommand{\rank}{\mathrm{rank}}
\newcommand{\Fp}{\mathbb{F}_p}
\newcommand{\cA}{\mathcal{A}}
\newcommand{\binsf}{\bullet\in\{\sharp,\flat\}}
 \definecolor{pAlgae}{RGB}{87,115,135}
\definecolor{airforceblue}{rgb}{0.36, 0.54, 0.66}
	\definecolor{bondiblue}{rgb}{0.0, 0.58, 0.71}
\definecolor{britishracinggreen}{rgb}{0.0, 0.26, 0.15}
\definecolor{camouflagegreen}{rgb}{0.47, 0.53, 0.42}
\definecolor{darkcyan}{rgb}{0.0, 0.55, 0.55}
\begin{document}
\author{Ashay Burungale}
\address{Ashay Burungale\newline  Department of Mathematics\\
The University of Texas at Austin\\2515 Speedway\\ Austin\\ TX 78712\\ USA }
\email{ashayk@utexas.edu}

\author{K\^az\i m B\"uy\"ukboduk}
\address{K\^az\i m B\"uy\"ukboduk\newline UCD School of Mathematics and Statistics\\ University College Dublin\\ Belfield\\Dublin 4\\Ireland}
\email{kazim.buyukboduk@ucd.ie}

\author{Antonio Lei}
\address{Antonio Lei\newline Department of Mathematics and Statistics\\University of Ottawa\\
150 Louis-Pasteur Pvt\\
Ottawa, ON\\
Canada K1N 6N5}
\email{antonio.lei@uottawa.ca}

\title{Anticyclotomic Iwasawa theory of abelian varieties of $\mathrm{GL}_2$-type at non-ordinary primes II}

\subjclass[2020]{11R23 (primary); 11G05, 11R20 (secondary)}
\keywords{Anticyclotomic Iwasawa theory, elliptic curves,  supersingular primes, bipartite Euler systems, Kolyvagin's conjecture.}
\begin{abstract}
Let $E/\mathbb{Q}$ an elliptic curve with good supersingular reduction at a prime $p\geq 5$, and $K$ an imaginary quadratic field such that the root number of $E$ over $K$ equals $-1$. When $p$ splits in $K$, Castella and Wan formulated the plus/minus Heegner point main conjectures for $E$ along the anticyclotomic $\mathbb{Z}_p$-extension of $K$, and proved them for semistable curves. We generalize their results to two settings:
\begin{itemize}
    \item[1.] For $p$ split in $K$,
    we formulate Sprung-type main conjectures for $\mathrm{GL}_2$-type abelian varieties at non-ordinary primes and prove them under some conditions.
    \item[2.] For $p$ inert in $K$, we formulate, relying on the work of the first-named author with Kobayashi and Ota, plus/minus Heegner point main conjectures for elliptic curves, and prove the minus main conjecture for semistable curves. 
\end{itemize}
The latter yields a $p$-converse to the Gross--Zagier and Kolyvagin theorem for semistable elliptic curves $E$ at supersingular primes $p\geq 5$, 
complementing the pioneering $p$-converse theorems of Skinner and Zhang.

Our method relies on Howard's framework of bipartite Euler systems, Zhang's resolution of Kolyvagin's conjecture and the recent proof of cyclotomic main conjecture at non-ordinary primes. 
\end{abstract}

\maketitle
\tableofcontents

\section{Introduction} 

Iwasawa theory of an elliptic curve $E{/\mathbb{Q}}$ along the anticyclotomic $\mathbb{Z}_p$-extension of an
imaginary quadratic field $K$
concerns the arithmetic of the anticyclotomic $\ZZ_p$-deformation of the $p$-adic Tate module of $E$. 
In this conjugate self-dual case, the phenomena are intertwined with the root number of $E$ over $K$, the splitting of $p$ in $K$ as well as the type of reduction of $E$ at $p$. This article explores anticyclotomic Iwasawa
theory of $E$ at a prime $p$ of non-ordinary reduction in the root number $-1$ case, allowing $p$ to be split or inert in $K$.
It is a sequel to our article \cite{BBL1} which considered the root number $+1$ case. 

Let $N_0$ denote the conductor of $E$.  
Suppose that the discriminant of the imaginary quadratic field $K$ is coprime to $N_0$. Write $N_0 = N^{+}N^{-}$, where $N^+$ and $N^-$ are the positive integers only divisible by primes which are split and 
inert in $K$, respectively. Throughout the paper, we assume the following generalized Heegner hypothesis: 
\begin{itemize}
    \item [(\mylabel{item_GHH}{\textbf{H}})] The integer $N^-$ is a square-free product of an even number of primes. 
\end{itemize}
It implies that the root number of $E$ over $K$ equals $-1$.

Let $p \geq 5$ be a prime of good supersingular reduction for $E$ unramified in $K$. In particular, $a_p(E) = 0$. Let $\rho_E: G_\QQ \ra  \GL_2(\ZZ_p)$ denote the associated $p$-adic Galois representation and $\overline{\rho}_E: G_\QQ \ra \GL_2(\FF_p)$ its mod $p$ reduction. 
Let $K_\infty$ denote the anticyclotomic $\Zp$-extension of $K$ and put $\Gamma=\Gal(K_{\infty}/K)$. 
For an integer $m\ge0$, write $K_m$ for the unique subextension of $K_{\infty}/K$ such that $[K_m:K]=p^m$. 
Put $\Gamma_m=\Gal(K_\infty/K_m)$ and $G_m=\Gal(K_m/K)$.
Let $\LL$ denote the Iwasawa algebra $\Zp[[\Gamma]]=\displaystyle\varprojlim_m\Zp[G_m]$. 
Since the root number of $E$ over $K_m$ also equals $-1$, one expects the existence of non-torsion points along the anticyclotomic tower. Hence the anticyclotomic plus/minus Selmer groups \'a la Kobayashi \cite{kobayashi03} are not expected to be $\LL$-cotorsion, 
 unlike the setting of \cite{BBL1}. 
A beautiful idea of Heegner provides a candidate for the non-torsion points via modular parametrization of $E$ and CM theory: Heegner points. 

Iwasawa theory of Heegner points was initiated by Perrin-Riou in the late 80's \cite{perrinriou87}. With an inspiration from Kolyvagin's work, she formulated the Heegner point main conjecture for an ordinary prime 
$p$ split\footnote{In fact Perrin-Riou allows $p$ to be inert in $K$, but the rest of the discussion in the paragraph requires $p$ to be split.} in $K$, 
which relates the maximal $\Lambda$-torsion submodule of the Pontryagin dual of the $p$-primary Selmer group of $E$ over $K_\infty$ to the index of the $\Lambda$-module generated by the Heegner points. 
In his thesis, Bertolini \cite{Bertolini1995} studied one divisibility toward this main conjecture: upper bound for the Selmer group in terms of the $\Lambda$-module generated by Heegner points,  based on  the Kolyvagin system of Heegner points. 
About a decade later, it was refined by Howard \cite{howard04}, completing proof of this divisibility. Subsequently, building on 
seminal ideas of Bertolini and Darmon~\cite{BertoliniDarmon2005}, Howard \cite{howard06} found a criterion under which the Heegner point main conjecture holds true, thereby introducing the framework of bipartite Euler systems. Howard's criterion was recently verified in \cite{BCK}, yielding a proof of the conjecture under some hypotheses. The method relies on Zhang's breakthrough \cite{zhang14} towards  Kolyvagin's conjecture, which asserts non-triviality of the Heegner point Kolyvagin system. A different approach was proposed by Wan \cite{wan21}, relying on the $p$-adic Waldspurger formula \cite{bertolinidarmonprasanna13} and a Rankin--Selberg main conjecture. 


More recently, for a supersingular prime $p$ split in $K$, 
Castella and Wan \cite{castellawan} formulated plus/minus Heegner point main conjectures.
The formulation involves plus/minus Selmer groups \`a la Kobayashi \cite{kobayashi03} (see also \cite{longovigni}). Using the strategy in \cite{wan21}, Castella and Wan also proved the plus/minus Heegner point main conjecture for semistable elliptic curves. 

In this article, we formulate and study plus/minus Heegner point main conjectures for a supersingular prime $p$ {\it{inert}} in $K$. 
The formulation 
is rooted in the work of
the first named author\footnote{He is grateful to Shinichi Kobayashi and Kazuto Ota for inspiring discussions (see also \cite{BHKO,BKO3,BKNO}).} with Kobayashi and Ota \cite{BKO1,BKO2} on anticyclotomic CM Iwasawa theory at inert primes (cf.~Conjecture~\ref{conj:IMC}). The main result proves the minus Heegner point main conjecture for semistable elliptic curves (cf.~Theorem~\ref{thm:main}). It has an application to the Birch and Swinnerton-Dyer conjecture: $p$-converse to the Gross--Zagier and Kolyvagin theorem for supersingular primes $p$ (cf.~Corollary \ref{p-cv}).
Our approach also treats the case of general weight two elliptic newforms and non-ordinary primes $p$ split in $K$ (see Section~\ref{appA}). 

\subsection{Main results}
We present the framework and results for an elliptic curve and a supersingular prime $p$ inert in $K$, referring the reader to \S\ref{appA} for the split non-ordinary case.

Our framework begins with definition of an anticyclotomic counterpart 
$\Sel_\pm(K_\infty,E)$ 
of Kobayashi's plus/minus Selmer groups in the cyclotomic setting (cf.~\S\ref{sec:Sel}). 
It relies on a system of local points introduced in \cite{BKO1}, which leads to plus/minus local conditions at the prime above $p$. We then define compact plus/minus Selmer groups $H^{1,\pm}(K,\TT^\ac) \subset \displaystyle\varprojlim_m H^1(K_m,T_p E)$, where $T_p E$ denotes the $p$-adic Tate module of $E$ and $\TT^\ac:=T_pE \otimes \Lambda$. 

In view of the generalized Heegner hypothesis \eqref{item_GHH},
the elliptic curve $E$ admits parametrization by a Shimura curve  associated with the indefinite quaternion algebra of discriminant $N^-$. The images of CM points on the Shimura curve with CM by an order of $K$ of conductor $p^n$ under the parametrization  define Heegner points on $E$ along the anticyclotomic $\ZZ_p$-tower. 
In \S\ref{subsec_8_1_2022_09_27_0906} we introduce a certain $\Lambda$-adic linear combination of the Heegner points, 
giving rise to plus/minus Heegner classes 
$$\kappa(1)^\pm\in H^{1,\pm}(K,\TT^\ac).$$

\subsubsection{Main conjectures}
In the spirit of main conjectures of Perrin-Riou \cite{perrinriou87} and Kobayashi \cite{kobayashi03} we propose the following. 

\begin{conj}[plus/minus Heegner point main conjectures]\label{conj:IMC} 
Let $E$ be an elliptic curve defined over $\QQ$ of conductor $N_{0}$ and $p\geq 5$ a prime of good supersingular reduction. Let $K$ be an imaginary quadratic field with discriminant coprime to $N_0$ so that the generalized Heegner hypothesis \eqref{item_GHH} holds and $p$ is inert in $K$.  
\begin{itemize}
    \item[a)] For $\bullet\in\{+,-\}$, we have 
    $\rank_\Lambda\,\Sel_\bullet(K_\infty,E)^\vee = 1$\,.
 \item[b)] Let $\Sel_\bullet(K_\infty,A_f)^\vee_{\rm tor}$ denote the $\LL$-torsion submodule of $\Sel_\bullet(K_\infty,A_f)^\vee$. Then we have\footnote{Here $\Char_\Lambda(M)$ denotes the characteristic ideal of a  $\Lambda$-module $M$.}
\[
{\rm char}_{\LL}\left(H^{1,\bullet}(K,\TT^\ac)/\LL\cdot\kappa(1)^\bullet \right)^2 = {\rm char}_{\LL}(\Sel_\bullet(K_\infty,E)^\vee_{\rm tor})
\]
as ideals of $\LL[1/p]$. Moreover, for $\bullet=-$, the equality holds in $\LL$.     
\end{itemize}
    \end{conj}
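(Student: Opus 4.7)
The plan is to fit the classes $\kappa(1)^{\pm}$ into Howard's bipartite Euler system framework \cite{howard06}, as made effective in \cite{BCK}, with the novel ingredient being the plus/minus local conditions at the inert prime $p$ furnished by \cite{BKO1,BKO2}. Concretely, I would construct two parallel towers of classes indexed by squarefree products of $n$-admissible primes: the indefinite-side Heegner classes $\kappa(n)^{\bullet}$ coming from the Shimura curve attached to the indefinite quaternion algebra of discriminant $N^{-}$, and the definite-side theta elements $\theta(n)^{\bullet}$ arising from Gross points on the definite quaternionic Shimura set of discriminant $pN^{-}$. The plus and minus Selmer structures $H^{1,\pm}(K,\TT_f^{\ac})$ take the place of the strict/relaxed ordinary conditions at $p$ that appear in Howard's original setup, and one must verify the usual Galois-theoretic and local compatibility axioms at every admissible $\ell$.

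The technical core then consists of two explicit reciprocity laws. The first relates the singular residue at an admissible $\ell$ of $\kappa(n\ell)^{\bullet}$ to the value of $\theta(n)^{\bullet}$, and the second relates the residue of $\theta(n\ell)^{\bullet}$ to the finite part of $\kappa(n)^{\bullet}$; both generalize the Bertolini--Darmon reciprocity used in \cite{BCK}. For the $\bullet=-$ integral statement one additionally needs an anticyclotomic Waldspurger-type formula connecting $\kappa(1)^{-}$ to a $p$-adic $L$-function via the Coleman-type map of \cite{BKO1}; this plays the role that the Bertolini--Darmon--Prasanna formula of \cite{bertolinidarmonprasanna13} plays for Castella--Wan in the split case.

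With the bipartite Euler system and the reciprocity laws in place, I would apply Howard's main criterion to conclude that $\kappa(1)^{\bullet}$ generates a $\Lambda$-submodule of rank exactly one and that part (b) holds as an equality of ideals in $\LL[1/p]$, provided the associated Kolyvagin system is non-trivial at some level. The non-triviality is supplied by Zhang's resolution \cite{zhang14} of Kolyvagin's conjecture, transferred to our generalized Heegner setting exactly as in \cite{BCK}, and it simultaneously gives part (a). The integral refinement of part (b) in the minus case is then obtained by ruling out a spurious $\mu$-invariant through a control-theoretic comparison with the cyclotomic supersingular setting, where the recently established Kobayashi main conjecture supplies the vanishing of the cyclotomic $\mu$-invariant.

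The main obstacle, in my view, is not the bipartite Euler system machinery itself --- which is by now reasonably standard --- but rather the inert-prime reciprocity. Local Iwasawa theory at a non-split supersingular prime is substantially more intricate than in the split case treated by Castella--Wan, and the precise identification of $\kappa(1)^{-}$ with a $p$-adic $L$-function through the local formalism of \cite{BKO1,BKO2} is where the genuinely new technical work must concentrate; in particular, it is what forces the asymmetry between the $+$ and $-$ cases in part (b) of the conjecture.
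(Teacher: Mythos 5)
The statement you address is a \emph{conjecture}; the paper does not claim to prove it in full, but rather establishes Theorem~\ref{thm:main} (parts (i), (ii), and, for square-free conductor, (iii) for $\bullet=-$). Your broad architecture---bipartite Euler systems in the sense of Howard, with the indefinite side built from Shimura curves, the definite side built from theta elements on Shimura sets, the signed local conditions at $p$ supplied by the local points of \cite{BKO1}, and non-vanishing from Zhang's theorem---is indeed the skeleton the paper uses. However, there are two substantive points at which your proposed route diverges from what the paper actually does, and in both cases your route would not go through.

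First, for the integral equality in the $-$-case you propose to use ``an anticyclotomic Waldspurger-type formula connecting $\kappa(1)^-$ to a $p$-adic $L$-function via the Coleman-type map of \cite{BKO1}, playing the role that the Bertolini--Darmon--Prasanna formula plays for Castella--Wan.'' The paper explicitly rejects exactly this plan: in \S\ref{ss:strategy} it states that in the inert case ``the search for a $p$-adic $L$-function/$p$-adic Waldspurger formula intertwined with an Iwasawa main conjecture remains elusive,'' and deliberately avoids the Castella--Wan approach. Instead, the equality is obtained through Howard's \emph{equality criterion}, reformulated as Theorem~\ref{thm_bipartite_Heegner_main}(iii), which requires primitivity of the bipartite Euler system. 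Primitivity is verified via Zhang's level-raising/rank-lowering principle: one raises the level to an $n$-admissible $S$ so that $\mathrm{Sel}(K,A_{g_S})=0$ (Proposition~\ref{prop_sweeting_lifting}), shows via the leading term formula of Lemma~\ref{lem:interpolation} that $\lambda(S,n)^-(\one)^2$ equals the normalized $L$-value mod $p^n$, and then uses the $p$-adic BSD formula coming from the resolution of Kobayashi's main conjecture \cite{bstw} (together with \cite{CCSS,FW21}) to conclude that the normalized $L$-value is a $p$-adic unit. Your proposal to use the Kobayashi main conjecture ``to rule out a spurious $\mu$-invariant through a control-theoretic comparison'' is therefore a mislocation of where that input actually enters: it is used to certify a $p$-adic unit $L$-value for a level-raised form with trivial Selmer group, not to control anticyclotomic $\mu$-invariants.

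Second, your explanation of the asymmetry between $+$ and $-$ in part~(b) as coming from the intricacy of inert-prime reciprocity is not what is happening. The asymmetry is a genuine exceptional zero phenomenon: because $a_p(f)=0$, the $+$-theta element satisfies $\lambda(S,n)^+(\one)=0$ for every $S$ and every $n$ (Lemma~\ref{lem:interpolation}, first assertion), so the primitivity criterion of Theorem~\ref{thm_bipartite_Heegner_main}(iii) can never be verified for $\bullet=+$ by this method at the prime $\fP=(\gamma-1)$. This is explicitly discussed in the remark following the conjecture. One further small point: the definite quaternion algebras that appear have discriminant $N^-S$ for $S\in\fN_n^\de$ a square-free product of $n$-admissible primes (which are by definition prime to $p$), not discriminant $pN^-$; the prime $p$ is not added to the ramification set of the quaternion algebra.
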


\begin{remark} In ~\cite{kobayashi03,castellawan} both the plus and minus main conjectures are expected to hold integrally. Our setting exhibits a new phenomenon (see \S\ref{ss:rmk}). 
\end{remark}

\subsubsection{Results}
Our results are conditional on the following hypotheses:
\begin{itemize}
        \item[(\mylabel{item_Loc}{\textbf{ram}})]
        If $\ell\mid N_0$, then $\bar{\rho}_E$ is ramified at $\ell$ in either of the following cases:
        \begin{itemize}
        \item[\tiny{$\circ$}] $\ell\mid N^{+}$,
        \item[\tiny{$\circ$}] $\ell \mid N^-$ and $\ell^2\equiv 1\mod{p}$.
        \end{itemize}
    \end{itemize}
    
The main result of this article 
is the following.
\begin{theorem}
\label{thm:main}
Let $E$ be a semistable elliptic curve defined over $\QQ$ of  conductor $N_{0}$ and $p\geq 5$ a prime of good supersingular reduction. Let $K$ be an imaginary quadratic field with discriminant coprime to $N_0$ and satisfying the generalized Heegner hypothesis \eqref{item_GHH}. Suppose that $p$ is inert in $K$, and satisfies the hypothesis 
\eqref{item_Loc}.
\begin{itemize}
    \item[i)] For $\bullet\in \{+,-\}$, we have $\rank_\Lambda\, H^{1}_{\bullet}(K,\TT^\ac)=\rank_\Lambda\,\Sel_\bullet(K_\infty,E)^\vee=1$\,. 
    \item[ii)] We have $${\rm char}_{\LL}\left(H^{1}_{+}(K,\TT^\ac)/\LL\cdot\kappa(1)^+ \right)^2 \subset {\rm char}_{\LL}(\Sel_+(K_\infty,E)^\vee_{\rm tor}).$$ 
    \item[iii)] For $\bullet=-$, Conjecture~\ref{conj:IMC} holds true, that is
    $${\rm char}_{\LL}\left(H^{1}_{-}(K,\TT_{f}^\ac)/\LL\cdot\kappa(1)^- \right)^2 = {\rm char}_{\LL}(\Sel_-(K_\infty,E)^\vee_{\rm tor})\,$$
    as ideas of $\Lambda$.
\end{itemize}
\end{theorem}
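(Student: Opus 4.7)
The plan is to recast the collection of Heegner point classes $\{\kappa(n)^\bullet\}_{n}$, indexed by admissible squarefree $n$, as a bipartite Euler system in the sense of Howard \cite{howard06}, with the $\bullet$-$\Theta$-elements constructed from \cite{BKO1,BKO2} playing the role of algebraic $p$-adic $L$-functions. The $\pm$-decomposition of local points at the inert prime $p$ coming from Burungale--Kobayashi--Ota provides the input needed to formulate the $\bullet$-local conditions at $p$ and to compute $\mathrm{loc}_p(\kappa(n)^\bullet)$.

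First I would verify the axioms of a bipartite Euler system: the norm-compatibility of $\kappa(n)^\bullet$ under change of level (with explicit Euler factors at the admissible primes $\ell\mid n$), and the two explicit reciprocity laws. One of them is a Jochnowitz-type congruence relating $\mathrm{loc}_\ell(\kappa(n\ell)^\bullet)$ at a new admissible prime $\ell$ to a $\bullet$-$\Theta$-element at level $n$; the other identifies $\mathrm{loc}_p(\kappa(n)^\bullet)$ with a $\bullet$-$\Theta$-element via the $\pm$-logarithm formalism of \cite{BKO1}. This is where the inert-prime theory of \cite{BKO1,BKO2} is indispensable.

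With the bipartite Euler system in place, the machinery of \cite{howard06} directly produces the divisibility
\[
{\rm char}_{\LL}\bigl(H^{1,\bullet}(K,\TT_{f}^\ac)/\LL\cdot\kappa(1)^\bullet\bigr)^2 \subseteq {\rm char}_{\LL}\bigl(\Sel_\bullet(K_\infty,E)^\vee_{\rm tor}\bigr)
\]
in $\LL[1/p]$, which yields part (ii). For part (i), non-triviality of $\kappa(1)^\bullet$, extracted from a BDP-type formula for inert primes, forces $\rank_\LL H^{1,\bullet}\geq 1$ and $\rank_\LL \Sel_\bullet(K_\infty,E)^\vee\geq 1$, while the global duality package intrinsic to the bipartite Euler system formalism forces the two ranks to coincide, so the displayed divisibility pins them both at $1$.

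The main obstacle is part (iii), the equality on the $-$-side. This requires verifying the equality criterion of \cite[Theorem~3.2.3]{howard06}, which demands non-vanishing modulo $p$ of the $-$-Kolyvagin system at some admissible level, equivalently of the $-$-$\Theta$-element mod $p$. Here I would combine Zhang's resolution \cite{zhang14} of Kolyvagin's conjecture with the recent proof of Kobayashi's cyclotomic main conjecture to transfer mod-$p$ non-triviality from the finite-level cyclotomic setting into the anticyclotomic $-$-setup. The squarefreeness of $N_0$ ensures that local conditions at primes dividing $N_0$ behave compatibly during this reduction, while hypotheses \eqref{item_BI} and \eqref{item_Loc} allow the residual-level arguments of Howard and Zhang to apply. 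The parallel strategy fails on the $+$-side because of the exceptional-zero phenomenon for the $+$-$\Theta$-elements modulo $p^n$ flagged in the remark after Conjecture~\ref{conj:IMC}, which is precisely why only a divisibility can be extracted there.
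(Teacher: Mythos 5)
Your high-level roadmap (bipartite Euler system \`a la Howard, Zhang's principle, Kobayashi's main conjecture) matches the paper, but there is one structural misunderstanding and one missing key lemma that together amount to a genuine gap.

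The structural error concerns the second explicit reciprocity law. You claim the bipartite Euler system is equipped with a reciprocity law identifying $\mathrm{loc}_p(\kappa(n)^\bullet)$ with a $\bullet$-$\Theta$-element via a BDP-type $\pm$-logarithm at the inert prime $p$. This is not how Howard's formalism works, and it is not what the paper does. In a bipartite Euler system, \emph{both} reciprocity laws are at admissible primes $\ell$: condition (T2) sends $\res_\ell(\kappa(S\ell,n)^\bullet)$ to $\lambda(S,n)^\bullet$ when $S\ell$ is indefinite, and condition (T3) sends $\res_\ell(\kappa(S,n)^\bullet)$ to $\lambda(S\ell,n)^\bullet$ when $S\ell$ is definite. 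What happens at $p$ is only a \emph{membership condition} (T1): one must show $\kappa(S,n)^\pm$ lies in the signed local subspace $\widehat{H}^{1,\pm}_{\mathrm{ord}_S}(K_\infty,T_{f,n})$, which is Lemma~\ref{lemma_local_conditions_Heeg_inert_case} in the paper. The BKO local points \cite{BKO1} are used precisely to \emph{define} this signed local condition via the Coleman maps of \S\ref{sec:coleman} and to prove (T1) --- not to produce a $p$-adic Waldspurger formula. Indeed the paper explicitly remarks (\S\ref{ss:strategy}) that in the inert case a $p$-adic $L$-function / $p$-adic Waldspurger formula intertwined with the main conjecture ``remains elusive,'' and names reliance on such a formula as the main point of \emph{difference} from the approach of Castella--Wan. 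Your blueprint therefore describes the approach the paper deliberately avoids.

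The missing ingredient is the interpolation formula for the signed theta elements at the trivial character (Lemma~\ref{lem:interpolation}), which is the concrete bridge between the equality criterion of Theorem~\ref{thm_bipartite_abstract_main}(iii) and Kobayashi's cyclotomic main conjecture. The paper's proof of Proposition~\ref{prop:equality} proceeds by invoking Sweeting's arithmetic level-raising (Proposition~\ref{prop_sweeting_lifting}) to produce a characteristic-zero newform $g_S$ congruent to $f$ with $\Sel(K,A_{g_S})=0$, then deduces from \cite{bstw} that $L(g_S/K,1)/(\Omega_{g_S}^{\mathrm{can}}\eta_{g_S,N^+,N^-S})$ is a $p$-unit, and \emph{only then} uses Lemma~\ref{lem:interpolation} to conclude that $\lambda(S,n)^-(\one)$ is a unit. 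That lemma is nontrivial because $g_S$ is not supersingular with $a_p=0$; the paper has to manipulate non-integral $p$-stabilized theta elements $\lambda(g)_{\alpha,m},\lambda(g)_{\beta,m}$ and cancel cross terms to obtain an integral leading-term identity. Without identifying and proving this interpolation formula (and the companion vanishing $\lambda(S,n)^+(\one)=0$, which is the actual source of the exceptional-zero phenomenon you allude to), the reduction of the equality criterion to Kobayashi's main conjecture does not go through. Finally, a smaller point: the non-triviality (T4) of $\kappa(1)^\bullet$ in the paper is cited from the proof of Mazur's nonvanishing conjecture in \cite{burungale2020}, not from a BDP-type interpolation.
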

\begin{remark}
Theorem~\ref{thm:main} iii) may be restated as follows:
We have 
 $${\rm char}_{\LL}\left(H^{1}_{-}(K,\TT_{f}^\ac)/\LL\cdot\kappa(1)^- \right)^2 \subset {\rm char}_{\LL}(\Sel_-(K_\infty,E)^\vee_{\rm tor})\,$$
 and the equality holds if the signed \emph{cyclotomic} main conjecture holds for weight two elliptic newforms\footnote{In fact, our method requires it for a specific weight two newform: a newform obtained from mod $p$ level raising of the newform associated with $E$ for which the $p^\infty$-Selmer group over $K$ vanishes.} with square-free conductor and good non-ordinary reduction at $p$ or their quadratic twist with good reduction at $p$. Such cases of the main conjecture are established in \cite{bstw,CCSS}.
\end{remark}

We have the following application of the $p$-converse to the Gross--Zagier and Kolyvagin theorem \cite{GZ86,koly}.  
\begin{corollary}\label{p-cv}
Let $E$ be a semistable elliptic curve defined over $\QQ$ and $p\geq 5$ a prime of good supersingular reduction. Suppose that $p$ is coprime to the Tamagawa number of $E$. 
Then, 
$$
{\rm corank}_{\ZZ_{p}}\Sel_{p^\infty}(E/\QQ)=1 \quad \implies \quad \ord_{s=1}L(E,s)=1.
$$
\end{corollary}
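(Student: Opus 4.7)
The plan is to apply Theorem~\ref{thm:main}(iii) (the equality in the minus Heegner point main conjecture) over a carefully chosen imaginary quadratic field $K$ to force the $\LL$-adic Heegner class $\kappa(1)^-$ to be non-torsion, and then descend to non-triviality of the classical Heegner point $y_K \in E(K)$ in order to invoke the Gross--Zagier formula.

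First I would select an imaginary quadratic field $K$ of discriminant coprime to $pN_0$ such that $p$ is inert in $K$, every prime dividing $N_0$ splits in $K$ (so that in the factorization $N_0 = N^+ N^-$ one has $N^- = 1$, making \eqref{item_GHH} hold trivially and \eqref{item_Loc} vacuous), $h_K$ is prime to $p$ (condition \eqref{item_cp}), and $L(E^K, 1) \neq 0$. The existence of such $K$ is standard: the $p \nmid h_K$ condition is generic, while non-vanishing of central values of the twisted $L$-functions with prescribed local behavior is guaranteed by the work of Bump--Friedberg--Hoffstein and Murty--Murty. For semistable $E$ with $p \geq 5$, the hypothesis \eqref{item_BI} follows from Mazur's classification of rational isogenies together with the good supersingular hypothesis (the potentially delicate case $p=5$ requiring a short case-check), so all assumptions of Theorem~\ref{thm:main} are in force for this $K$.

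Next, Kato's Euler system for $E^K$, which applies in the supersingular setting, yields $|\Sel_{p^\infty}(E^K)| < \infty$ from $L(E^K, 1) \neq 0$. Since $p$ is odd, the $\Gal(K/\QQ)$-action decomposes $\Sel_{p^\infty}(E/K) \simeq \Sel_{p^\infty}(E) \oplus \Sel_{p^\infty}(E^K)$ up to finite groups, and together with the hypothesis $\mathrm{corank}_{\Zp}\,\Sel_{p^\infty}(E) = 1$ this yields $\mathrm{corank}_{\Zp}\,\Sel_{p^\infty}(E/K) = 1$. Theorem~\ref{thm:main}(i) already gives $\rank_\LL \Sel_-(K_\infty, E)^\vee = \rank_\LL H^1_-(K, \TT_f^\ac) = 1$, so Theorem~\ref{thm:main}(iii) reads
\[
{\rm char}_{\LL}\bigl(H^1_-(K, \TT_f^\ac)/\LL \cdot \kappa(1)^-\bigr)^2 \,=\, {\rm char}_{\LL}\bigl(\Sel_-(K_\infty, E)^\vee_{\rm tor}\bigr),
\]
whose right-hand side is nonzero by definition of the torsion submodule. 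If $\kappa(1)^-$ were $\LL$-torsion, then $\LL \cdot \kappa(1)^-$ would be a torsion submodule of the rank-one module $H^1_-(K, \TT_f^\ac)$, so the quotient on the left would have $\LL$-rank one and hence \textbf{zero} characteristic ideal, a contradiction. Thus $\kappa(1)^-$ is non-$\LL$-torsion.

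It remains to transfer non-triviality of $\kappa(1)^-$ to non-triviality of the classical Heegner point $y_K \in E(K)$: this is the main obstacle. In the inert supersingular setting the requisite control/specialization argument is subtle, but it can be carried out using the explicit construction of $\kappa(1)^-$ from the $p$-power conductor Heegner points via the local framework of \cite{BKO1,BKO2}, together with a non-vanishing check for the $\LL$-adic interpolation factor governing the bottom-layer augmentation. Pinning down the image of $\kappa(1)^-$ in $E(K)\otimes\Zp$ as a nonzero multiple of $y_K$ then shows that $y_K$ has infinite order in $E(K)$. The Gross--Zagier formula gives $L'(E/K, 1) \neq 0$, hence $\ord_{s=1}L(E/K, s) = 1$; combined with the factorization $L(E/K, s) = L(E, s) L(E^K, s)$ and $L(E^K, 1) \neq 0$, one concludes $\ord_{s=1} L(E, s) = 1$.
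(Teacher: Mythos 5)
Your overall strategy matches the paper's: choose $K$ with $p$ inert, all primes of $N_0$ split (so $N^-=1$), $p\nmid h_K$, and $L(E^K,1)\neq 0$; invoke Theorem~\ref{thm:main}(iii); and conclude via Gross--Zagier. The paper, however, offloads the entire descent step to the argument of \cite[Theorem 6.10]{castellawan}, whereas you attempt to spell it out, and the account you give of that step does not work as described.

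Observe that your derivation of non-torsion of $\kappa(1)^-$ uses only the main conjecture equality of Theorem~\ref{thm:main}(iii); it makes no use of the hypothesis $\mathrm{corank}_{\Zp}\Sel_{p^\infty}(E)=1$. Correspondingly, the corank computation over $K$ in your second paragraph is derived but then never used. This is a signal that something is amiss: if your chain of implications were valid, it would prove $\ord_{s=1}L(E,s)=1$ for \emph{every} semistable $E$ supersingular at $p\ge 5$ (e.g.\ for curves of rank $0$), which is absurd. Concretely, the implication ``$\kappa(1)^-$ not $\LL$-torsion $\Rightarrow$ its image in $E(K)\otimes\Zp$ is non-torsion'' is false. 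The $\LL$-adic class $\kappa(1)^-$ is non-torsion essentially by Mazur's conjecture (the content of property \eqref{item_T4}, due to Cornut--Vatsal/Burungale), irrespective of the rank of $E(K)$. Its specialization at the trivial character can perfectly well vanish: the augmentation map $H^{1,-}(K,\TT_f^\ac)/(\gamma-1)\to H^1(K,T_f)$ need not be injective, and there is no ``interpolation factor'' whose non-vanishing rescues the conclusion. The actual argument (the one in Castella--Wan you need to reproduce) combines the main conjecture, a control theorem at the trivial character, \emph{and} the corank-one hypothesis on $\Sel_{p^\infty}(E/K)$ to pin down the exact $\Zp$-corank of the bottom-level signed Selmer group and to deduce that the Heegner point accounts for the rank; only then does one get $y_K$ non-torsion and can apply Gross--Zagier. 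That delicate step is the content of what you labeled ``the main obstacle'', and it cannot be replaced by a bare specialization-plus-interpolation-factor argument.
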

\begin{proof}
Pick an imaginary quadratic field $K$ with $p$ inert, all primes dividing $N_0$ split so that 
$$
L(E\otimes \epsilon_K,1) \neq 0,
$$
 where $\epsilon_K$ denotes the quadratic character associated with the extension $K/\QQ$ (cf.~\cite{bf}). 
 
 Note that \eqref{item_Loc} is satisfied due to our hypothesis on the Tamagawa number. Therefore, Theorem~\ref{thm:main}(iii) holds. It implies the $p$-converse by the same argument as in the proof of \cite[Theorem 6.11]{castellawan}.
\end{proof}

\begin{remark}
A decade ago, Skinner \cite{skinner20} and Zhang \cite{zhang14} initiated the study of $p$-converse to the Gross--Zagier and Kolyvagin theorem. Their pioneering work considered ordinary primes $p$ (see \cite{bst2021} for an overview). 
The above supersingular case is originally due to Castella and Wan \cite{castellawan}, of which our method provides a different proof. 
\end{remark}
\subsection{Strategy}\label{ss:strategy}
We approach Theorem~\ref{thm:main} by adapting Howard's framework of bipartite Euler systems to the present setting. It also relies on Zhang's approach to Kolyvagin's conjecture \cite{zhang14} and a recent progress in cyclotomic non-ordinary Iwasawa theory \cite{bstw,CCSS}. 

We begin with construction of a family of plus/minus Heegner classes and 
$p$-adic $L$-functions in the guise of theta elements,  
and subsequently prove reciprocity laws connecting the two (see \S\ref{subsec_8_1_2022_09_27_0906}). This framework is ancillary to the strategy, and builds on the work of Bertolini--Darmon \cite{BertoliniDarmon2005} and Darmon--Iovita \cite{darmoniovita}, and Burungale--Kobayashi--Ota \cite{BKO1,BKO2}. Next, we show that the plus/minus Heegner classes extend to  $\Lambda$-adic Kolyvagin systems. Consequently, part (i) and the upper bound for Selmer groups in parts (ii)-(iii) of Theorem~\ref{thm:main} follow. 
As for the equality in the plus/minus Heegner point main conjecture, 
our variant of Howard's equality criterion asserts that it holds if the bipartite Euler system arising from the plus/minus Heegner class and theta element is primitive. 

The primitivity is approached via Zhang's principle  of level raising and rank lowering \cite{zhang14}.  
A key for its implementation: a formula that relates the value of the plus/minus theta elements at the trivial character to the algebraic part of the associated central $L$-value.
We establish such a formula for the minus theta element, whereas the plus theta element curiously vanishes at the trivial character! 
In light of Zhang's principle, 
the minus Heegner point main conjecture then reduces to the implication: 
if the mod $p$ Selmer group associated with a certain\footnote{It arises from level raising and rank lowering of the newform associated with $E$, the existence of which is due to Zhang \cite{zhang14}.} weight two elliptic newform $g$ over $K$ is trivial, then algebraic part of the associated central $L$-value is a $p$-adic unit. This implication is a consequence of the signed cyclotomic Iwasawa main conjecture for $g$ and $g\otimes \epsilon_K$ over $\QQ$ as established in \cite{bstw,CCSS}. 

The strategy necessitates working with elliptic newforms $g$ congruent to $f_E$ for which $a_p(g)$ is not necessarily zero, since the method of level raising only guarantees that $a_p(g)\equiv a_p(f_E)= 0 \mod{p}$. This leads to some technical complications. 
For example, we 
consider non-integral theta elements to establish the aforementioned formula for the plus/minus theta elements at the trivial character (see Lemma~\ref{lem:interpolation}).

We now discuss the case where $p$ splits in $K$. 
Generalizing our previous work~\cite{BBL1}, we extend Theorem~\ref{thm:main} to abelian varieties of $\GL_2$-type over $\QQ$ that are non-ordinary at $p$ for primes $p\geq 5$ split in $K$ under similar hypotheses (see Theorem~\ref{thm:appendix}). In this setting $a_p(f)$ is not necessarily zero, where $f$ denotes the  associated weight two newform, and we construct signed Heegner classes and theta elements.
We approach the signed main conjecture via the above strategy. 
As for the value of the signed theta elements at the trivial character, we modify the calculation in the inert case, and the outcome turns out to be different! Namely, the signed theta elements relate to the algebraic part of the central $L$-value for both signs. Here we utilize an explicit Waldspurger formula of Cai--Shu--Tian \cite{cst} (see the proof of Lemma~\ref{lem:app-interpolation}). Consequently, we establish the signed Heegner point main conjectures for both signs and the analogue of Corollary~\ref{p-cv} for $\GL_2$-type abelian varieties (see~Theorem~\ref{thm:appendix} and Corollary~\ref{cor:p-conv}). 
In contrast, 
Theorem~\ref{thm:main} 
is only shown for elliptic curves, since it relies on the anticyclotomic family of local points constructed in \cite{BKO1,BKO2}, which is not yet generalized to $\GL_2$-type abelian varieties (see Remark~\ref{rk:lastwords}).

The strategy above has its roots in the work of the first-named author with Castella and Kim \cite{BCK}, which established Perrin-Riou's Heegner point main conjecture at ordinary primes in several cases. It notably differs from the work of Castella--Wan \cite{castellawan}. In {\it{loc. cit.}} the authors rely on the $p$-adic Waldspurger formula of Bertolini--Darmon--Prasanna \cite{bertolinidarmonprasanna13} relating the plus/minus Heegner points with an anticyclotomic $p$-adic $L$-function and an Eisenstein congruence divisibility \cite{clw}  towards the main conjecture for this $p$-adic $L$-function. On the other hand, we rely on a cyclotomic main conjecture in light of Zhang's aforementioned principle of level raising and rank lowering. 

\subsubsection{Remarks on Conjecture~\ref{conj:IMC}}\label{ss:rmk}
An integral formulation of plus Heegner point main conjecture is an interesting open problem. 

Due to the exceptional zero phenomenon for the plus theta element explained above (see also Lemma~\ref{lem:interpolation}), 
the plus bipartite Euler system to which the Heegner class $\kappa(1)^+$ belongs does {\it{not}} satisfy the equality criterion of Howard. In turn, our method does not suggest an optimal form for the plus Heegner point main conjecture. We surmise that the naive formulation is not true. This rudimentary guess is based on the following principle\footnote{The principle  was first proposed by Mazur and Rubin \cite{mr02} 
in the context of Kolyvagin systems over discrete valuation rings (a rigidity phenomenon), and later generalized to cyclotomic deformations (resp. deformations over complete regular local rings) by the second-named author \cite{kbb} (resp.~\cite{kbbdeform}; see also \cite{sakamoto_Gorenstein,KimKimSun}). 
For an elliptic newform, it implies that the Kolyvagin system attached to the residual Galois representation is non-zero if the bound arising from Beilinson--Kato elements is optimal. The latter is known to hold under some hypotheses (cf.~\cite{skinnerurbanmainconj}).}: the bound arising from a Kolyvagin system over a complete local Noetherian ring is optimal if and only if the Kolyvagin system is non-zero modulo the maximal ideal.

\subsection{Related works}
This work was first announced in \cite{BBL1}.
During the final stages of its preparation, we became aware of the recent work of Bertolini--Longo--Venerucci \cite{BLV}. 
In {\it{loc. cit.}} the authors establish the anticyclotomic main conjecture in new cases, which include Theorem~\ref{thm:main}. As for the split case, {\it{loc. cit.}} assumes that $a_p(f)=0$ in the non-ordinary case, whereas we treat the general non-ordinary case (cf.~\S\ref{appA}). 
 The strategy in {\it loc. cit.} shares some similarities as well as differences with this article. In the inert supersingular case, both crucially rely on the construction of local points in \cite{BKO1}. 
 Our proof of Theorem~\ref{thm:main}(iii) employs the equality criterion originating from~\cite{howard06}, where Howard formalized the method of Bertolini and Darmon~\cite{BertoliniDarmon2005}.  On the other hand, Bertolini--Longo--Venerucci refine the inductive argument of \cite{BertoliniDarmon2005}. Our treatment of the interpolation formulas for the plus/minus theta elements and their utility also differ from \cite[\S4]{BLV} (see~\S\ref{ss:strategy}).  
As illustrated in \S\ref{appA}, our framework is more readily adaptable to the $a_p(f)\ne0$ case. 

In a different direction, when $p$ splits in $K$, the method of \cite{BCK} has been very recently used by Castella--Hsu--Kundu--Lee--Liu to prove the plus/minus Heegner point main conjecture for elliptic curves \cite[Appendix A]{CHKLL}. Note that Theorem \ref{thm:main} and the results in \S\ref{appA} complement {\it{loc. cit.}}

Our explicit formulas for the value of plus/minus theta elements at the trivial character have other applications to supersingular Iwasawa theory (see~\cite{CHKLL}, Remark~A.7). 

\subsubsection{Horizon} In the inert case, the search for a $p$-adic $L$-function or a $p$-adic Waldspurger formula which is connected with Selmer groups and leads to an Iwasawa main conjecture remains  elusive. 
An important problem is to find connections between our plus/minus Heegner points and the $p$-adic Waldspurger formula of Andreatta and Iovita \cite{AndreattaIovitaBDP}.  

\subsection{Organization} Sections \ref{S:notation}--\ref{sec:Sel} present some preliminaries. In particular, section \ref{sec:coleman} constructs the relevant Coleman maps using local results of \cite{BKO1,BKO2}. Section \ref{S:Shimura} constructs plus/minus Heegner points and theta elements, and establishes their basic properties. Then section \ref{sec_Heeg_classes_construction_reciprocity} adapts Howard's framework of bipartite Euler systems to the inert supersingular case, and verifies the analogue of his equality criterion, leading to Theorem~\ref{thm:main}. Finally, section \ref{appA} treats the split non-ordinary case. 

\subsection*{Acknowledgements}
We thank Francesc Castella, Henri Darmon, Adrian Iovita, Chan-Ho Kim, Shinichi Kobayashi,  Kazuto Ota, Chris Skinner,
Naomi Sweeting, 
Matteo Tamiozzo, Ye Tian and Xin Wan for helpful discussions. 
We are grateful to the referees for their extensive comments,
acute questions 
and constructive suggestions.

AB is partially supported by the NSF grants DMS-2303864 and DMS-2302064. KB’s research in this publication was conducted with the financial support of Taighde \'{E}ireann -- Research Ireland under Grant number IRCLA/2023/849 (HighCritical). AL's research is supported by the NSERC Discovery Grants Program RGPIN-2026-04351. Parts of this article are based upon work supported by the National Science Foundation under Grant No. DMS-1928930 while the authors were in residence at the MSRI / Simons Laufer Mathematical Sciences Institute (SLMath) in Berkeley, California, during the Spring 2023 semester.  

\section{Notation}\label{S:notation}

Let the setting be as in the introduction. 
In particular, $K$ denotes an imaginary quadratic field and $p\geq 5$ a prime, which is assumed to be inert in $K$ until
\S\ref{sec_Heeg_classes_construction_reciprocity}.
Fix an elliptic curve $E/\QQ$ of conductor $N_0$ with good supersingular reduction at $p$ and let $T_p(E)$ denote the associated $p$-adic Tate module. The hypothesis  \eqref{item_Loc} will be assumed from \S\ref{S:Shimura} onward.

Let $f\in S_2(\Gamma_0(N_0))$ be the weight two elliptic newform associated with $E$ and let $T_f$ denote the $f$-isotypic subquotient of $H^1_{\text{\'et}}(X_0(N_0),\ZZ_p)$. 
An optimal modular parameterization of $E$ induces an isomorphism $T_p(E)\simeq T_f$.
Put $V_f=T_f\otimes_{\Zp}\Qp$ and $$A_f=V_f/T_f\simeq E[p^\infty].$$ Given an integer $n\ge0$,  write $\Tfn$ for $T_f/p^n T_f\simeq E[p^n]$ and put\footnote{Even though $\Tfn$ and $\Afn$ are isomorphic as Galois modules, we find it helpful to distinguish them, especially when considering duality.} $A_{f,n}:=A_f[p^n]\simeq E[p^n]$.

Throughout this article, we consider weight two Hecke eigenforms on a quaternion algebra that are congruent to $f$.
Let $$X=X_{M^+,M^-}$$ be the Shimura curve (resp.~Shimura set) {attached to an indefinite (resp.~a definite) quaternion algebra of discriminant $M^-$ and of level $\Gamma_0(M^+)$ (see \cite[\S1.3]{BD-mumford} for a more detailed discussion)}. Let $h$ be a $\Zp$-valued weight two Hecke eigenform on $X$. Write $T_h$ for the $\Zp$-linear $G_\QQ$-representation associated with $h$, and define $T_h$, $V_h$, $A_h$ and $\Thn$ just as above.

For a field $k$, write $G_k=\Gal(\overline{k}/k)$, where $\overline{k}$ denotes a separable closure. 
For an algebraic extension $k'/k$ and  a $G_k$-representation $W$, let
$$ \cor_{k'/k}:H^1(k',W)\rightarrow H^1(k,W)\quad\text{and}\quad \res_{k'/k}:H^1(k,W)\rightarrow H^1(k',W)$$
be the corestriction and restriction maps respectively. For a $p$-adic Lie extension $\mathcal{K}/k$, put
\[
H^i_{\mathrm{Iw}}(\mathcal{K},W)= \varprojlim H^i(k',W),
\]
where the inverse limit is over finite extensions $k'$ of $k$  contained in $\mathcal{K}$ with respect to corestriction maps.


\section{Coleman maps}\label{sec:coleman}
The aim of this section is to introduce the pertinent local Iwasawa theory.

Recall that $p$ is inert in the imaginary quadratic field $K$ and 
$K_\infty$ denotes the anticyclotomic $\Zp$-extension of $K$.
Throughout this section, let $w$ be a place of $K_\infty$ lying above $p$. Let $$\Gamma_w=\Gal(K_{\infty,w}/K_p)\cong\Zp$$ be the decomposition group of $w$ in $\Gamma$. 
Note that $K_p=\QQ_{p^2}$. 
To lighten the notation, we shall write $k_\infty=K_{\infty,w}$. For an integer $ m\ge0$, let $k_m$  denote the intermediate extension of $k_\infty/K_p$ that is of degree $p^m$. Let $\cG_m=\Gal(k_m/k_0)$.

Put
$$\mathfrak{R}=\cO_{K_p}[[\Gamma_w]]=\varprojlim\cO_{K_p}[\cG_m].$$ 
Given an integer $n\ge 1$,  write $\mathfrak{R}_n=\mathfrak{R}/p^n \mathfrak{R}=(\cO_{K_p}/p^n\cO_{K_p})[[\Gamma_w]]$. If $m\ge0$ is another integer, let $\mathfrak{R}_{m,n}$ denote the group ring $(\cO_{K_p}/p^n\cO_{K_p})[\cG_m]$.

\subsection{Local points}
The following result of Burungale--Kobayashi--Ota from \cite{BKO1} is key to this section.

\begin{theorem}\label{thm:Q-inert}
There exists a system of local points $d_m\in \hat{E}(\fM_{k_m})$ such that:
\begin{itemize}
    \item[(1)] $\Tr_{k_m/k_{m-1}}d_m=-d_{m-2}$  for all $m\ge 2$;
    \item[(2)] $\Tr_{k_1/k_0}d_1=-d_{0}$;
    \item[(3)] $d_0\in \hat{E}(\fM_{k_0})\setminus p\hat{E}(\fM_{k_0})$.
\end{itemize}
\end{theorem}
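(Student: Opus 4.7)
My plan is to construct the system $(d_m)$ as formal group points obtained from a specific $\Lambda$-adic class in local Iwasawa cohomology, then read off the trace relations from its interpolation property at finite layers. The essential arithmetic input is that $E$ is supersingular at $p$ with $a_p(E)=0$, so the crystalline Frobenius $\vp$ on $\Dcris(V_f)$ satisfies $\vp^2=-p$; this identity is what should ultimately furnish the minus sign appearing in (1) and (2).

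First I would set up the local geometry. Since $p$ is inert in $K$, we have $k_0=K_p\simeq\QQ_{p^2}$, unramified of degree two over $\QQ_p$, and $k_\infty/k_0$ is a totally ramified $\ZZ_p$-extension. The formal group $\hat{E}/\cO_{K_p}$ has height two, and the Kummer map embeds $\varprojlim_m\hat{E}(\fM_{k_m})$ into $\HIw(k_\infty,T_f)$. I would then invoke a Perrin-Riou-type big logarithm (or dual big exponential) for $T_f|_{G_{k_0}}$ along this tower: applied to a basis of $\Fil^0\Dcris(V_f)$ normalized via the N\'eron differential, this produces a canonical $\Lambda$-adic class whose image in $\varprojlim_m\hat{E}(\fM_{k_m})$ is the candidate for $(d_m)$.

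The trace relations (1) and (2) should then be extracted from the interpolation formula for the big logarithm at finite layers: after applying $\log_{\hat{E}}$, the images $d_m$ ought to be expressible in terms of iterated $\vp$-action on the chosen basis, and the identity $\vp^2=-p$ translates directly into a sign-twisted two-step trace compatibility once one inverts $\log_{\hat{E}}$. The boundary case (2) requires handling the unramified step $k_0/\QQ_p$ separately, and should fall out of a direct comparison between the $\vp$-action on $\Dcris(V_f)\otimes_{W(\FF_p)}W(\FF_{p^2})$ and that on $\Dcris(V_f)$ itself. This is the direct anticyclotomic analogue of Kobayashi's cyclotomic construction, except that the base is now $\QQ_{p^2}$ rather than $\QQ_p$ and the tower carries no Lubin-Tate torsion structure.

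The hardest part will be property~(3): the primitivity assertion that $d_0\notin p\hat{E}(\fM_{k_0})$. A priori the big-logarithm construction yields a class only up to a $\QQ_p$-scalar, so one must pin down a canonical integral normalization of the input in $\Fil^0\Dcris(T_f)$ and verify that the resulting $d_0$ is a unit multiple of a generator of $\hat{E}(\fM_{k_0})/p\hat{E}(\fM_{k_0})$. I expect this will reduce to an explicit evaluation of the leading term of the Coleman map at the trivial character, together with the verification that a certain period attached to the N\'eron differential is a $p$-adic unit in this inert supersingular setting; such an explicit integrality analysis is presumably the technical heart of \cite{BKO1}.
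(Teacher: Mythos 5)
The paper's proof is a bare citation to Burungale--Kobayashi--Ota \cite{BKO1}, so there is no argument here for you to reproduce; the substance lives in that reference. Your opening observation is correct and well-aimed: the sign in (1)--(2) does come from $a_p(E)=0$, i.e.\ from $\varphi^2=-p$ on $\Dcris(V_f)$. But the machinery you propose to extract it from does not exist in this setting, and supplying a substitute is precisely the content of \cite{BKO1}.

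Concretely, a Perrin-Riou big exponential/logarithm is available for the \emph{cyclotomic} tower, and its Lubin-Tate generalizations (Berger--Fourquaux, Schneider--Venjakob) for the full Lubin-Tate tower of a local field. The anticyclotomic $\ZZ_p$-extension $k_\infty/k_0$ is neither: it is the particular $\ZZ_p$-quotient of the Lubin-Tate tower of $\QQ_{p^2}$ cut out by the anticyclotomic condition, and no off-the-shelf big-logarithm map produces norm-compatible classes along this tower. Even if one were granted such a map, its output would a priori be an element of $\HIw(k_\infty,T_f)\otimes\QQ_p$, and you would still have to prove that it lands integrally in $\varprojlim_m\hat{E}(\fM_{k_m})$; that integrality is exactly where the difficulty lies. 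Note also that Kobayashi's cyclotomic construction, which you invoke as your model, does \emph{not} go through Perrin-Riou's big logarithm: he builds his points $c_n$ directly from the Honda-theoretic parametrization of $\hat{E}$ and reads off the trace relation by an explicit formal-group computation. The anticyclotomic construction in \cite{BKO1} is likewise explicit, organized around (relative) Lubin-Tate formal groups over $\QQ_{p^2}$ and the structure theory of local units in the anticyclotomic tower (Rubin's conjecture, resolved by the same authors). Your remark that the tower ``carries no Lubin-Tate torsion structure'' is therefore the opposite of the truth: that structure is essential to the argument. Finally, you flag (3) as the hardest point but offer no mechanism; in the explicit construction the primitivity of $d_0$ is arranged at the outset by the choice of generator at the bottom layer, and it does not fall out of a normalization of $\Fil^0\Dcris$ alone without the integrality analysis you defer.
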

\begin{proof}
This is \cite[Theorem~5.5]{BKO1}. 
\end{proof}

Put
\[
d_m^+=\begin{cases}
d_m&\textrm{if $m$ is even},\\
d_{m-1}&\textrm{if $m$ is odd},
\end{cases}\qquad d_m^-=\begin{cases}
d_{m-1}&\textrm{if $m\ge2$ is even},\\
d_{m}&\textrm{if $m$ is odd}.
\end{cases}\quad
\]

The formal group $\hat E$ is equipped with a natural $\cO_{K_p}$-action, which leads to the following definition of plus and minus subgroups inside $\hat{E}(k_m)$.

\begin{defn}
Let $m\ge0$ be an integer. We define $\hat E^\pm(k_m)$ to be the $\mathfrak{R}$-modules generated by $d_m^\pm$, respectively. 
\end{defn}
\begin{remark}
These subgroups can be described in terms of the trace maps, as in \cite[Definition~8.16]{kobayashi03}.    
\end{remark}

We may identify $\hat E(k_m)$ (resp. $\hat E(k_m)\otimes \Qp/\Zp$) with $H^1_{\rm f}(k_m,T_f)$ (resp. $H^1_{\rm f}(k_m,A_f)$) via the Kummer map.  Analogous to Lemma~8.17 of \textit{op. cit.}, the Kummer map also induces an injection 
\[
\hat E^\pm(k_m)\otimes\Qp/\Zp\hookrightarrow H^1(k_m,A_f).
\]
Furthermore, as in \cite[Proposition~8.6]{kobayashi03}, $\hat E(k_m)$ is $p$-torsion-free. Thus,
\[
H^1(k_m,\Afn)=H^1(k_m,A_f)[p^n]
\]
and we may identify $\hat E^\pm(k_m)/(p^n)\simeq \left(\hat E^\pm(k_m)\otimes\Qp/\Zp\right)[p^n]$ as a subgroup of $H^1(k_m,\Afn)$. 

\subsection{Coleman maps} 
In view of the last paragraph of the preceding subsection, proceeding as in \cite[Proposition~4.28]{BBL1}, we may define the $\mathfrak R$-morphisms
 $$\col^{\pm}_{\Tfn}:\HIw(k_\infty,\Tfn)\lra \mathfrak{R}_n.$$ 
More explicitly, $\col^\pm_{\Tfn}$ can be described as follows. 

We identify $\mathfrak{R}_n$ with the ring of power series $\cO_{K_p}/(p^n)[[X]]$. Let $m\ge1$ be an integer. Write $\omega_m=(1+X)^{p^m}-1\in \mathfrak{R}_n$ and let $\Phi_m$ denote the $p^m$-th cyclotomic polynomial. 
 Let
\[\langle-,-\rangle_{m,n}:H^1(k_m,\Tfn)\times H^1(k_m, \Afn)\lra \cO_{K_p}/(p^n).\]
denote the local $\cO_{K_p}$-linear pairing.
Define
 \begin{align*}
 \col^\pm_{m,n}:H^1(k_m,\Tfn)&\lra \mathfrak{R}_{m,n} , \\
 z&\longmapsto \sum_{\sigma\in G_m}\langle z^{\sigma^{-1}},d_m^\pm\rangle_{m,n}\sigma.
 \end{align*}
It can be shown as in \cite[Proposition~8.19]{kobayashi03} that the image of $\col^\pm_{m,n}$ lies inside 
\begin{equation}
\label{eqn_2023_04_17_1459}
    \mathfrak{R}_{n}/(\omega_m^\mp)\xrightarrow[\times \widetilde{\omega}_m^\pm]{\,\sim\,} \widetilde \omega_m^\pm \mathfrak{R}_{m,n}\,,
\end{equation}
where $$\widetilde\omega_m^+=\prod_{2\le r\le m,r\ \mathrm{even}}\Phi_r(1+X),\quad \widetilde\omega_m^-(X)=\prod_{1\le r\le m,r\ \mathrm{odd}}\Phi_r(1+X),\quad\text{and}\quad \omega_m^\pm=\omega_m/\widetilde\omega_m^\mp.$$
Taking inverse limit then results in the maps $\col^\pm_{\Tfn}$.

From this description, one can see that the kernel of $\col^{\pm}_{\Tfn}$ is precisely $\displaystyle\varprojlim_m (\hat E^\pm(k_m)/(p^n))^\perp$, where $(\hat E^\pm(k_m)/(p^n))^\perp\subset H^1(k_m,\Tfn)$ is the orthogonal complement of $\hat E^\pm(k_m)/(p^n)\subset H^1(k_m,\Afn)$ under $\langle-,-\rangle_{m,n}$.
 Note that even though $\col^\pm_{\Tfn}$ depends on the choice of $d_m^\pm$, its kernel does not.


\begin{defn}
For $\bullet\in\{+,-\}$ and $m\ge0$, define $H^{1,\bullet}(k_m,\Tfn)\subset H^1(k_m,\Tfn)$ to be the image of $\ker\col^\bullet_{\Tfn}$ under the natural projection $\HIw(k_\infty,\Tfn)\rightarrow H^1(k_m,\Tfn)$.

We define $H^1_\bullet(k_m,\Afn)\subset H^1(k_m,\Afn)$ to be the orthogonal complement of $H^{1,\bullet}(k_m,\Tfn)$ under  $\langle -,-\rangle_{m,n}$.
\end{defn}

\begin{remark}
    We have the inclusion $\hat E^\pm(k_m)/(p^n)\subset H^1_\pm (k_m,\Afn)$; see \cite[Corollary~4.29]{BBL1}. When $0<m<\infty$, this inclusion is strict, that is, the two subgroups are not equal to each other, as discussed in Remark~4.9 of \textit{op. cit.}
\end{remark}


\section{Selmer groups}\label{sec:Sel}
The aim of this section is to introduce some of the underlying Selmer groups. 

Recall that $K_\infty$ is the anticyclotomic $\Zp$-extension of $K$. For an integer $m\ge0$, let $K_m\subset K_\infty$ denote the unique subextension such that $[K_m:K]=p^m$. Let $n$ be a positive integer. For a rational prime $\ell$ and $X\in\{A,T\}$, let 
\[
K_{m,\ell}:=K_m\otimes_\QQ\QQ_\ell,\quad H^1(K_{m,\ell},X_{f,n}):=\bigoplus_{\lambda|\ell}H^1(K_{m,\lambda},X_{f,n}),
\]
where the direct sum runs over all primes of $K_m$ above $\ell$. We have the natural restriction map
\[
\res_\ell:H^1(K_m,X_{f,n})\lra H^1(K_{m,\ell},X_{f,n}).
\]

Denote by $H^1_\fin(\Kml,T_{f,n})$  the image of the Bloch--Kato subgroup $H^1_\fin(\Kml,T_f)$ given in \cite[(3.7.3)]{blochkato}. We similarly define $H^1_\fin(\Kml,A_{f,n})$. The singular quotient is given by
\[
H^1_\sing(\Kml,X_{f,n}):=\frac{ H^1(\Kml,X_{f,n})}{H^1_\fin(\Kml,X_{f,n})}.
\]

\begin{defn}
The Bloch--Kato Selmer group of $A_{f,n}$ (resp. $\Tfn$) over $K_m$ is defined as 
\[
\Sel(K_m,A_{f,n}):=\ker\left(H^1(K_m,A_{f,n})\lra \prod_\ell H^1_\sing(\Kml,A_{f,n})\right)\,,
\]
\[
H^1_{\rm f}(K_m,T_{f,n}):=\ker\left(H^1(K_m,T_{f,n})\lra \prod_\ell H^1_\sing(\Kml,T_{f,n})\right).
\]
Put
\[
\Sel(K_\infty,A_{f,n}):=\varinjlim_m \Sel(K_m,A_{f,n})\,,
\]
\[
\widehat H^1(K_\infty,T_{f,n}):=\varprojlim_m H^1_{\rm f}(K_m,T_{f,n}).
\]
Moreover, 
for $\cdot\in\{0, \emptyset\}$  
define $\Sel_\cdot$ and $H^1_\cdot$  by replacing $H^1_\sing(K_{m,p},X_{f,n})$ with $H^1(K_{m,p},X_{f,n})$ and $0$, respectively.
\end{defn}

\begin{defn}
\label{def:signedSelmer} 
For $\bullet\in\{+,-\}$, 
define the $\bullet$-Selmer group by 
\[
\Sel_\bullet(K_m,A_{f,n}):=\ker\left(H^1(K_m,A_{f,n})\rightarrow \prod_{\ell\nmid p} H^1_\sing(\Kml,A_{f,n})\times \prod_{\fp| p}\frac{ H^1(K_{m,\fp},A_{f,n})}{H^1_\bullet(K_{m,\fp},A_{f,n})}\right)\,,
\]
\[
H^{1,\bullet}(K_m,T_{f,n}):=\ker\left(H^1(K_m,T_{f,n})\rightarrow \prod_{\ell\nmid p} H^1_\sing(\Kml,T_{f,n})\times \prod_{\fp| p}\frac{ H^1(K_{m,\fp},T_{f,n})}{H^{1,\bullet}(K_{m,\fp},T_{f,n})}\right).
\]
Moreover, define
\[
\Sel_\bullet(K_\infty,A_{f,n}):=\varinjlim_m \Sel(K_m,A_{f,n})\,,\qquad \Sel_\bullet(K_\infty,A_{f}):=\varinjlim_{n} \Sel_\bullet(K_\infty,A_{f,n}).
\]
\[
\widehat H^{1,\bullet}(K_\infty,T_{f,n}):=\varprojlim_m H^{1,\bullet}(K_m,T_{f,n})\,.
\]

For $0\le m\le\infty$ and $\cdot\in\{0,\emptyset, +,-\}$, we similarly define the Selmer groups  $\Sel_\cdot(K_m,A_f)$.
\end{defn}

Finally, we introduce the following auxiliary Selmer groups.
\begin{defn}
\label{defn_4_3_2024_12_10}
    Let $S$ be a square-free integer that is coprime to $pN_0$. For $\cdot\in\{0,\emptyset,+,-,\}$ define the generalized Selmer group $\Sel_{S,\cdot}(K_m,\Afn)$ 
by
$$\Sel_{S,\cdot}(K_m,\Afn):=\ker\left(\Sel_\cdot(K_m,\Afn)\lra \bigoplus_{\ell \mid S} H^1(K_{m,\ell},\Afn) \right)\,.$$
Similarly, define $H^1_{S,\cdot}(K_m,\Tfn)$ by
$$H^1_{S,\cdot}(K_m,\Tfn):=\ker\left(H^1(K_m,\Tfn)\lra  \bigoplus_{\fp\mid p} \frac{H^1(K_{m,\fp},\Tfn)}{H^1_\cdot(K_{m,\fp},\Tfn)}\,\oplus\,\bigoplus_{\ell\nmid S} \frac{H^1(\Kml,\Tfn)}{H^1_\fin(\Kml,\Tfn)}  \right)\,,$$
where $H^1_\cdot(K_{m,\fp},\Tfn)$ equals $0$ (resp. $H^1(K_{m,\fp},\Tfn)$)
for $\cdot=0$ (resp. $\cdot=\emptyset$). Likewise, put 
$$H^{1,\bullet}_S(K_m,\Tfn):=\ker\left(H^1(K_m,\Tfn)\lra  \bigoplus_{\fp\mid p} \frac{H^1(K_{m,\fp},\Tfn)}{H^{1,\bullet}(K_{m,\fp},\Tfn)}\,\oplus\,\bigoplus_{\ell\nmid S} \frac{H^1(\Kml,\Tfn)}{H^1_\fin(\Kml,\Tfn)}  \right)\,,\quad  \bullet\in \{+,-\}\,.$$
\end{defn}


\section{Shimura curves, Shimura sets, and special loci}\label{S:Shimura}
This section  adapts basic constructions in the work of Bertolini and Darmon ~\cite{BertoliniDarmon2005}
to our inert supersingular setting. 
It also incorporates Zhang's principle of level raising and rank lowering \cite{zhang14}.

As before, let $f=f_E$ be the elliptic newform associated with an  
elliptic curve $E/\QQ$ and $p\geq 5$ a supersingular prime. {From this section onward, we assume that $E$ is semistable, and the hypothesis \eqref{item_Loc}. Since $p\geq 5$, the $G_{\QQ}$-action on $E[p]$ is surjective  (cf.~\cite[Proposition~2.1]{edi97}), which will be used in our arguments.}

\subsection{Setting}

\begin{defn}
\label{def_admissible_prime_mod_pn}
For an integer $n\geq 1$, 
a rational prime $\ell$ is said to be \textbf{$n$-admissible} relative to $f$ if it satisfies the following conditions:
\begin{itemize}
    \item[i)] $\ell\nmid pN_0$ ;
    \item[ii)] $\ell$ is inert in $K$;
    \item[iii)] $p\nmid \ell^2-1$;
    \item[iv)] $p^n$ divides $(\ell+1)^2-a_\ell(f)^2$.
    \end{itemize}
\end{defn}
For an $n$-admissible prime $\ell$, denote by $\epsilon_\ell(f)\in\{\pm 1\}$ the integer so that $p^n$ divides $\ell+1-\epsilon_\ell(f)a_\ell(f)$. Let $\epsilon_\ell:\QQ^\times\to \{\pm 1\}$ be the unique character given by $\ell\mapsto \epsilon_\ell(f)$.

Write $\fL_n$ for the set of $n$-admissible primes and $\fN_n$ that of of square-free products of primes in $\fL_n$. We will consider $1$ as an element of $\fN_n$.
For $1\neq S\in \fN_n$,  
let $S$ also denote the set of prime factors of $S$.
  As before, let $\epsilon_K$ denote the quadratic character associated with the extension $K/\QQ$.
  
\begin{defn}
   We say that $S\in\fN_n$ is \textbf{definite} if $${\epsilon_K(SN^-)=-1.}$$ 
    Otherwise, it is said to be \textbf{indefinite}. 
\end{defn}
  
    Let $\fN_n^\de$ (resp. $\fN_n^\inde$) denote the subset of elements in $\fN_n$ that are definite (resp. indefinite). 
    \begin{example} $1\in \fN_n^\inde$. 
    \end{example}
Following \cite[Definition 2.22]{BertoliniDarmon2005} and \cite[Definition 3.16]{darmoniovita}, we say that $S\in \fN_n$ is \emph{\textbf{core-$n$-admissible}} if ${\rm Sel}_{S,\emptyset}(K,A_{f,n})=0$.

\begin{proposition}
\label{prop_2023_04_17_1447}
Let $n$ be a positive integer and $S \in \fN_n$. Then there exists a core-$n$-admissible $S' \in \fN_n$ divisible by $S$.
\end{proposition}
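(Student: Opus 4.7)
The plan is to adapt the classical Chebotarev-based inductive argument of Bertolini--Darmon~\cite[Prop.~2.25]{BertoliniDarmon2005}, as refined by Darmon--Iovita~\cite[Prop.~3.17]{darmoniovita}, to the Selmer group $\Sel_{S,\{\}}(K,T_{f,n})$ at hand. This group is a finite $\ZZ/p^n$-module --- being a subgroup of $H^1(K_\Sigma/K,T_{f,n})$ for a suitable finite set of places $\Sigma$ --- so we may induct on its cardinality.

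The base case is vacuous: if $\Sel_{S,\{\}}(K,T_{f,n})=0$, then $S$ is already core-$n$-admissible. For the inductive step, fix a nonzero class $c \in \Sel_{S,\{\}}(K,T_{f,n})$. The aim is to produce an $n$-admissible prime $\ell \notin S$ such that $\res_\lambda(c) \neq 0$ in $H^1(K_\lambda,T_{f,n})$, where $\lambda$ is the unique prime of $K$ above $\ell$. Once this is achieved, the strict local condition at $\ell$ that accompanies the passage from $S$ to $S\cup\{\ell\}$ in the definition of $\Sel_{S\cup\{\ell\},\{\}}$ expels $c$, giving $|\Sel_{S\cup\{\ell\},\{\}}(K,T_{f,n})|<|\Sel_{S,\{\}}(K,T_{f,n})|$ and closing the induction.

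The production of $\ell$ relies on Chebotarev's density theorem. Let $F=\QQ(T_{f,n},\mu_{p^n})$ and let $F_c/F$ denote the finite Galois extension cut out by (a cocycle representative of) $c$ restricted to $G_F$, regarded as a homomorphism $G_F \to T_{f,n}$. It suffices to exhibit $\tau \in \Gal(F_c/\QQ)$ satisfying: (i) $\tau|_K$ generates $\Gal(K/\QQ)$, so that any $\ell$ with $\Frob_\ell\sim \tau$ is inert in $K$; (ii) $\tau$ acts on $T_{f,n}$ with eigenvalues $1$ and $u$ for some $u \in (\ZZ/p^n)^\times$ with $u \not\equiv \pm 1 \pmod p$ --- then the identities $\ell \equiv \det\tau \equiv u$ and $a_\ell(f) \equiv \mathrm{tr}\,\tau \equiv 1+u$ modulo $p^n$ deliver $p\nmid \ell^2-1$ and $p^n \mid (\ell+1)^2-a_\ell(f)^2$, forcing $n$-admissibility; and (iii) a nonvanishing of $c(\tau)$ modulo $(\tau-1)T_{f,n}$, which translates into $\res_\lambda(c) \neq 0$ in the unramified cohomology $T_{f,n}/(\Frob_\lambda-1)\cong \ZZ/p^n$.

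The hard part is securing the joint compatibility of (i)--(iii). Items (i) and (ii) constrain only the image $\bar\tau \in \Gal(F/\QQ)$ of $\tau$, and under the bigness hypothesis \eqref{item_BI} --- combined with Serre's open-image theorem for $p\geq 7$ in the non-CM case --- a direct matrix computation in $\GL_2(\ZZ/p^n)\times\Gal(K/\QQ)$ produces a suitable $\bar\tau$. For (iii), the inflation--restriction sequence paired with residual absolute irreducibility of $\bar\rho_f$ (which follows from \eqref{item_BI}) shows that $c$ restricts to a nontrivial $\Gal(F/K)$-equivariant homomorphism $G_F \to T_{f,n}$, whose image $\Gal(F_c/F)$ is a nonzero $\Gal(F/K)$-stable submodule of $T_{f,n}$, necessarily equal to $T_{f,n}$ by irreducibility. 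Since $\bar\tau$ has $1$ as an eigenvalue and $u\not\equiv 1 \pmod p$, the subgroup $(\bar\tau-1)T_{f,n}$ is a proper index-$p^n$ subgroup of $T_{f,n}$; the freedom in choosing the lift $\tau$ within the coset $\bar\tau\cdot\Gal(F_c/F)$ then allows one to arrange $c(\tau)\notin (\bar\tau-1)T_{f,n}$. Chebotarev produces the desired $\ell$, and the induction closes.
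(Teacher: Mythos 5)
Your proposal reconstructs the Chebotarev argument of Bertolini--Darmon (Theorem 3.2) and Mazur--Rubin (Corollary 4.1.9) that the paper's proof simply cites, so the route is the same as the paper's. Two steps in your sketch are imprecise and would need repair in a full write-up. First, since $c$ lives over $K$ and not over $\QQ$, the field $F_c$ need not be Galois over $\QQ$, so $\Gal(F_c/\QQ)$ is not yet defined as written; the standard fix is to decompose $c$ into its eigencomponents under complex conjugation and run the argument with a nonzero eigenpiece, whose cut-out field is Galois over $\QQ$. Second, and more substantively, the assertion that $\Gal(F_c/F)=T_{f,n}$ ``by irreducibility'' is false for $n\geq 2$: irreducibility of $\bar{\rho}_f$ over $\FF_p$ only constrains $\FF_p$-subspaces, and the image of $c|_{G_F}$ may well be a proper $\ZZ/p^n$-submodule such as $p^{n-1}T_{f,n}$. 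The step nevertheless goes through because what you actually need is weaker: any nonzero $\Gal(F/K)$-stable submodule $I\subset T_{f,n}$ satisfies $I\not\subset(\bar{\tau}-1)T_{f,n}$, since $I\cap T_{f,n}[p]$ is a nonzero $\Gal(F/K)$-stable subspace of $T_{f,n}[p]\cong\FF_p^2$, hence all of it by residual irreducibility, whereas $(\bar{\tau}-1)T_{f,n}\cap T_{f,n}[p]$ is only a line; the freedom in the lift $\tau$ within $\bar{\tau}\cdot\Gal(F_c/F)$ then produces $c(\tau)\notin(\bar{\tau}-1)T_{f,n}$ as you intend.
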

\begin{proof} In view of surjectivity of the $G_{\QQ}$-action on $E[p]$ (cf.~\cite[Proposition~2.1]{edi97}), the existence follows from 
\cite[Theorem 3.2]{BertoliniDarmon2005}. As remarked in the proof of \cite[Proposition~7.9]{BBL1}, neither the choice of local conditions at $p$ nor the splitting behaviour of $p$ in $K/\QQ$  plays a role in the argument. 
\end{proof}

\begin{remark}\label{rmk:bd} For the existence of admissible primes under mild hypotheses, the reader may refer to \cite[Theorem 3.3.5 and Corollary 3.3.9]{sweeting}.
It may eventually lead to removal of the hypothesis that $E$ is semistable 
in our results (see also Remark~\ref{rmk:lr}).
\end{remark}

\subsection{Shimura curves}
\label{subsection_shimura_curves}
Fix an integer $$S\in 
\bigcup_n \fN_n^\inde$$ and let $B_S$ denote the indefinite quaternion algebra over $\QQ$ of discriminant $N^-S$. 

We consider the compact Shimura curve 
$$X_S:=X_{N^+,N^-S}$$
attached to an Eichler order $R_S\subset B_S$ of level $N^+$. Then, 
$$X_S(\CC)=B_S^\times \big\backslash\left( (\CC-\mathbb{R}) \times \widehat{B}_S^\times\right)\big/\widehat{R}_S^\times \,\, \bigcup \, \{\hbox{cusps}\}\,,$$
where
$\widehat{B}_S^\times:=B_S\otimes_{\ZZ}\widehat{\ZZ}$ and $\widehat{R}_S^\times:=R_S\otimes_{\ZZ}\widehat{\ZZ}$. Note that unless $N^-S=1$, the set $\{\hbox{cusps}\}$ is empty.
In the latter case, $X_S=X_0(N)$ is the classical modular curve. 

The curve $X_S$ is a coarse moduli space, and admits a smooth regular model over $\ZZ[\frac{1}{NS}]$, which we still denote by $X_S$ (cf. \cite{KatzMazur85, Buzzard97, Helm07}, see also \cite[\S4.2]{jsw} for a description of the moduli problem). 
\subsubsection{CM locus}
Fix an optimal embedding $\iota_K:K\hookrightarrow B_S$ such that $$\iota_K(K)\cap R_S=\iota_K(\cO_K),$$ where $\cO_K\subset K$ is the ring of integers. Let $\mathfrak{h}$ denote the unique fixed point of the upper half plane under the action of $\iota_K(K)$.  

The set of CM points (relative to the imaginary quadratic field $K$) on $X_S$ is given by 
$${\rm CM}(X_S):=\{[\mathfrak{h},b] \in X_S(\CC):\,b\in \widehat{B}_S^\times\}\xleftrightarrow{\,\sim\,}
K^\times \backslash \widehat{B}_S^\times/\widehat{R}_S^\times\,.$$

\subsubsection{The Heegner point of conductor $p^m$}
\label{subsubsec_522_2023_04_07}

For each natural number $m$, consider the Heegner point $x_m^0 \in {\rm CM}(X_S)$ defined as the image of
$$
\left[\begin{pmatrix}
    p^m & 0\\
0& 1
\end{pmatrix}\right]\in K^\times \big\backslash \widehat{K}^\times{\rm GL}_2 (\QQ_p)\widehat{B}_S^{(p)}\big/\widehat{R}_S^\times \subset K^\times \backslash \widehat{B}_S^\times/\widehat{R}_S^\times \xleftrightarrow{\,\sim\,} {\rm CM}(X_S)\,.$$
The Heegner point $x_m^0 \in {\rm CM}(X_S)$ is defined over the ring class field $K[p^m]$ of conductor $p^m$.

Fix an auxiliary rational prime $q_0 \nmid pNS$ such that $$a_{q_0}(f)-q_0-1 \in \ZZ_p^\times.$$ The existence of $q_0$ is guaranteed by the irreducibility\footnote{Recall that $E$ has good supersingular reduction at $p$.} of the $G_\QQ$-representation $E[p]$.

Let us put
$$x_m:=(a_{q_0}(f)-q_0-1)^{-1}{\rm Tr}_{K[p^{d(m)}]/K_m}\,(T_{q_0}-q_0-1)[x_{m+1}^0]\in {\rm Jac}(X_S)(K_m)\otimes_{\ZZ} \ZZ_p\,,$$
where $T_{q_0}$ is the $q_0$-th Hecke correspondence on $X_S$, $d(m)=\min\left\{d\in\ZZ_{\ge0}: K_m\subset K[p^{d}]\right\}$, and $K_m$ is the $m$-th layer of the anticyclotomic $\ZZ_p$-extension $K_\infty/K$. We define the class 
\begin{equation}
    \label{eqn_2023_3_31_1721}
    z_m \in H^1_{\rm f}(K_m,T_p({\rm Jac}(X_S)))\,.
\end{equation} as the image of $x_m$ under the Kummer map
$${\rm Jac}(X_S)(K_m)\otimes_{\ZZ}\ZZ_p\lra H^1_{\rm f}(K_m,T_p({\rm Jac}(X_S)))\,,$$
where $H^1_{\rm f}(K_m,T_p({\rm Jac}(X_S)))$ denotes the Bloch--Kato Selmer group.

\subsubsection{Arithmetic level raising}
\label{subsubsec_2024_12_04_1546}
Fix a positive integer $n$ so that $S\in \fN_n^\inde$ and denote by $\mathbb{T}_S$ the Hecke algebra generated by the Hecke operators $T_\ell$ for $\ell\nmid N_0S$ and $U_\ell$ for $\ell \mid N_0S$ acting on ${\rm Jac}(X_S)$ over $\ZZ_p$ via Picard functoriality. 

\begin{defn}
    A weak eigenform $h$ on $X_S$ modulo $p^n$ of weight $2$ is a ring homomorphism $$\mathbb{T}_S\xrightarrow{e_h} \ZZ/p^n\ZZ.$$ 
\end{defn}
    
    Let $I_h \subset \mathbb{T}_S$ denote the kernel of $e_h$. The $G_\QQ$-representation attached to $h$ is defined to be 
\begin{equation}
\label{eqn_2023_04_06_1654}
    \Thn:={\rm Jac}(X_S)[p^n]\big{/}I_h{\rm Jac}(X_S)[p^n]\,.
\end{equation}


We have the following key existence (cf.~\cite[\S4.3]{pollack-weston11}, \cite[\S4.2]{chidahsiehanticyclomainconjformodformscomposito}, \cite[Remark 4.5.8]{sweeting}, or \cite[proof of Theorem~5.15]{BertoliniDarmon2005}).
\begin{proposition}\label{prop,indef-existence} Let $E$ be a semistable elliptic curve and $p$ a prime such that the hypothesis \eqref{item_Loc} holds. 
Then for the newform $f=f_E$ and $S\in\fN_n^\inde$,  
there exists a weak eigenform 
\begin{align}\label{eq:f_S}
    \begin{aligned}
    f_S: \TT_S &\lra \ZZ/p^n\ZZ\,,\\
  (\ell\nmid  N_0S) \qquad T_\ell &\longmapsto a_\ell(f) \mod p^n ,\\
(\ell\mid  N_0) \qquad \, U_\ell & \longmapsto a_\ell(f) \mod p^n ,\\
   (\ell\mid S) \qquad  U_\ell &\longmapsto \epsilon(\ell)  \mod p^n ,
    \end{aligned}
\end{align}
where $\epsilon(\ell)=\pm1$ is uniquely determined by the requirement that $p^n$ divides $\ell+1 -\epsilon(\ell)a_\ell(f)$ as in ~Definition~\ref{def_admissible_prime_mod_pn}.
\end{proposition} In turn, we have an isomorphism 
\begin{equation}
\label{eqn_2023_04_06_1212}
    T_{f_S, n}\simeq T_{f,n}
\end{equation}
of $G_\QQ$-representations 
(cf. \cite[Theorem~5.17]{BertoliniDarmon2005}; see also \cite[Proposition~4.4]{pollack-weston11}). 
Fix such an isomorphism for each $S\in \fN_n^\inde$. 

\subsubsection{Selmer groups, bis}
Let $f_S$ be a weak eigenform in the previous subsection. Let $m$ be a natural number and  $\lambda$ a prime of $K_m$ lying above the rational prime $\ell$. 

Define $H^1_\fin(K_{m,\lambda},T_{f_S,n})$ to be the image of $H^1_\fin(K_{m,\lambda},T_{f,n})$ under the isomorphism \eqref{eqn_2023_04_06_1212} and put $$H^1_\fin(\Kml,T_{f_S,n}):=\oplus_{\lambda\mid \ell}\, H^1_\fin(K_{m,\lambda},T_{f_S,n}).$$  

This leads to the Selmer groups $H^1_{S,\emptyset}(K_m,T_{f_S,n})$
associated with the Galois representation $T_{f_S,n}$ as in Definition~\ref{defn_4_3_2024_12_10}.
\begin{remark}
    \label{remark_2024_10_04_1704} The Bloch--Kato subgroup 
    $H^1_{\rm f}(K_{m,\lambda},{\rm Jac}(X_S)[p^n])$
    coincides with the image of the Kummer map:
    $$H^1_{\rm f}(K_{m,\lambda},{\rm Jac}(X_S)[p^n])={\rm im}\left({\rm Jac}(X_S)(K_{m,\lambda})/p^n{\rm Jac}(X_S)(K_{m,\lambda})\xrightarrow{\rm Kum}H^1(K_{m,\lambda},{\rm Jac}(X_S)[p^n])\right)\,.$$
We discuss its relation with $H^1_\fin(K_{m,\lambda},T_{f,n})$ below.
\begin{itemize}
\item[i)] If $\lambda \nmid pN_0S$, then $H^1_\fin(K_{m,\lambda},T_{f_S,n})$ and the image of $H^1_{\rm f}(K_{m,\lambda},{\rm Jac}(X_S)[p^n])$ under the map induced from \eqref{eqn_2023_04_06_1654} coincide with the submodule $H^1_{\rm ur}(K_{m,\lambda},T_{f_S,n})=\ker(H^1(K_{m,\lambda},T_{f_S,n})\to H^1(K_{m,\lambda}^{\rm ur},T_{f_S,n}))$ of unramified cocycles. 
\item[ii)] If $\lambda \mid N_0$, then in view of the hypothesis \eqref{item_Loc}, the conclusion of (i) still holds due to \cite[Lemmas~6 and 8]{grossparson}, which cover the cases when $\lambda$ divides $N^+$ and $N^-$, respectively\footnote{We are grateful to Matteo Tamiozzo for related discussions and also sharing his master thesis, which expands on the exposition of \cite{grossparson}.}. 
\item[iii)] If $\lambda \mid S$, then the image of $H^1_{\rm f}(K_{m,\lambda},{\rm Jac}(X_S)[p^n])$ under the map induced from \eqref{eqn_2023_04_06_1654} does \emph{not} coincide with $H^1_\fin(K_{m,\lambda},T_{f_S,n})$. We refer the reader to \S\ref{subsubsec_517_2024_12_10} for elaboration, particularly the proof of Lemma~\ref{lemma_2023_04_23_2146}.
\item[iv)] If $f_S$ lifts to a characteristic zero Hecke eigenform with associated Galois representation $T_{f_S}$, then one can define $H^1_\fin(K_{m,p},T_{f_S,n})$ directly as the image of the Bloch--Kato subgroup $H^1_\fin(K_{m,p},T_{f_S})$. As explained in \cite[Proposition 6.5]{BBL1}, this alternative definition coincides with the one given above.
\end{itemize}
\end{remark}

\subsubsection{Norm relations}
\label{subsec_3_29_2023}
For a natural number $m$, a positive integer $n$ and $S\in \fN_n^\inde$,  the {\textit{Heegner class}} 
\begin{equation}
\label{eqn_2023_04_23_2203}
    \kappa(S,n)_m\in H^1_{S,\emptyset}(K_m,T_{f,n})
\end{equation}
is defined as the image of the class $z_m$ under the composition of the following morphisms: 
\begin{align}
\begin{aligned}\label{eqn_construct_Heeg_via_level_raising}    
        H^1_{\rm f}(K_m,T_p({\rm Jac}(X_S)))\rightarrow H^1_{\rm f}(K_m,{\rm Jac}(X_S)[p^n]) \xrightarrow{\eqref{eqn_2023_04_06_1654}} H^1_{S,\emptyset}(K_m,T_{f_S,n})\stackrel{\eqref{eqn_2023_04_06_1212}}{\simeq}  H^1_{S,\emptyset}(K_m,T_{f,n})\,. 
    \end{aligned}
\end{align}
The arrow \eqref{eqn_2023_04_06_1654} arises from Remark~\ref{remark_2024_10_04_1704}(i) and (ii).

Note that the class $\kappa(S,n)_m$ does not depend on the choice of the auxiliary prime $q_0$ in \S\ref{subsubsec_522_2023_04_07}. Moreover, the collection $\{\kappa(S,n)_m\}_{m\ge0}$ satisfies the 
trace relation
\begin{equation}
    \label{eqn_Heegner_trace_relation}
    {\cor}_{K_{m+1}/K_{m}}\, \kappa(S,n)_{m+1}= -\res_{K_{m}/K_{m-1}}\kappa(S,n)_{m-1}\,
\end{equation}
for any integer $m\geq 1$, since $a_p(f)=0$.

\subsubsection{} In the special case where $S=1$, we take $f_S=f \mod p^n$. The classes $\kappa (1,n)_m$ are evidently compatible as $n$ varies under the obvious maps. We may therefore define 
$$\kappa(1)_m:=\{\kappa (1,n)_m\}_n\in \varprojlim_n H^1_{\rm f}(K_m, T_{f,n})= H^1_{\rm f}(K_m, T_{f})\,.$$

\subsubsection{Local properties at primes dividing $S$} 
\label{subsubsec_517_2024_12_10}
We close this subsection with an analysis of the local properties satisfied by the class $\kappa(S,n)_m\in H^1_{S,\emptyset}(K_m,T_{f,n})$ at a prime $\ell \mid S$. 

It follows from condition (i) in Definition~\ref{def_admissible_prime_mod_pn} that $T_{f,n}$ is unramified at $\ell$. As explained in \cite[\S2.2]{BertoliniDarmon2005}, the requirements (iii) and (iv) enforce that the action of the Frobenius element over $\QQ$ on $T_{f,n}$ is semisimple with distinct eigenvalues $\pm \ell$ and $\pm 1$. 
This leads to a uniquely determined decomposition
\begin{equation}
\label{eqn_2023_04_23_1856}
T_{f,n}{}_{\vert_{G_{K_\ell}}}\simeq \mu_{p^n} \oplus \ZZ/p^n\ZZ\,.
\end{equation}

\begin{defn}
    \label{define_ord_condition}
    The ordinary local condition 
    $$H^1_{\rm ord}(K_\ell,T_{f,n}) \subset H^1(K_\ell,T_{f,n})$$ 
    on $T_{f,n}$ at $\ell$ is given as the image of $H^1(K_\ell,\mu_{p^n})\to H^1(K_\ell,T_{f,n})$  under the map induced from the decomposition given by \eqref{eqn_2023_04_23_1856}. 
\end{defn}

Since the prime $\ell$ splits completely in $K_m/K$ for any positive integer $m$, we have 
\begin{equation}
    \label{eqn_2023_04_23_2146}
    \bigoplus_{\lambda\mid \ell} H^1(K_{m,\lambda},T_{f,n})=H^1(K_{m,\ell},T_{f,n})=H^1(K_\ell,T_{f,n})\otimes \LL_{m,n}
\end{equation}
for the semi-local cohomology group at $\ell$, where the second identification is due to Shapiro's lemma. Put
$$H^1_{\rm ord}(K_{m,\ell},T_{f,n}):=H^1_{\rm ord}(K_\ell,T_{f,n})\otimes \LL_{m,n}\simeq \bigoplus_{\lambda\mid \ell} H^1_{\rm ord}(K_{m,\lambda},T_{f,n})\,.$$

\begin{lemma}
    \label{lemma_howard_06_221}
    For every positive integer $m$, we have a natural decomposition of free $\LL_{m,n}$-modules
    $$H^1(K_{m,\ell},T_{f,n})=H^1_{\rm ur}(K_{m,\ell},T_{f,n})\oplus H^1_{\rm ord}(K_{m,\ell},T_{f,n})$$
    induced from \eqref{eqn_2023_04_23_1856}, where each summand is maximal isotropic under the local Tate pairing.
\end{lemma}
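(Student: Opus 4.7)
The plan is to reduce everything to the base case $m=0$ using Shapiro's lemma and the hypothesis that $\ell$ splits completely in $K_m/K$, and then to verify the decomposition and the isotropy property on each local factor $H^1(K_\ell,T_{f,n})$. Concretely, the identification \eqref{eqn_2023_04_23_2146} together with the definition of $H^1_{\rm ord}(K_{m,\ell},T_{f,n})=H^1_{\rm ord}(K_\ell,T_{f,n})\otimes\LL_{m,n}$ makes both summands visibly free of rank one over $\LL_{m,n}$ and induced from $\LL_{m,n}$-scalars on the corresponding $m=0$ decomposition. Since the semi-local Tate pairing at $\ell$ is $\LL_{m,n}$-(sesqui)linear by its standard definition $\langle x,y\rangle:=\sum_{\sigma\in G_m}\langle x,\sigma y\rangle_{K_\ell}\,\sigma^{-1}$, any decomposition of $H^1(K_\ell,T_{f,n})$ into maximal isotropic subgroups for the $K_\ell$-level Tate pairing immediately extends to the same property over $\LL_{m,n}$.

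The core computation is at $m=0$. From \eqref{eqn_2023_04_23_1856} I get $H^1(K_\ell,T_{f,n})\simeq H^1(K_\ell,\mu_{p^n})\oplus H^1(K_\ell,\ZZ/p^n\ZZ)$. Exploiting that $\ell\ne p$ and $p\nmid \ell^2-1$, the unramified subgroup is computed by $H^1_{\rm ur}(K_\ell,M)=M_{\Frob_\ell}$: Frobenius acts by $\ell^2$ on $\mu_{p^n}$, so $H^1_{\rm ur}(K_\ell,\mu_{p^n})=\mu_{p^n}/(\ell^2-1)=0$, while $H^1_{\rm ur}(K_\ell,\ZZ/p^n\ZZ)=\ZZ/p^n\ZZ$. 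Dually (or by the local Euler characteristic formula), $H^2(K_\ell,\ZZ/p^n\ZZ)=H^0(K_\ell,\mu_{p^n})^\vee=0$, whence $H^1(K_\ell,\ZZ/p^n\ZZ)=H^1_{\rm ur}(K_\ell,\ZZ/p^n\ZZ)$; the complementary factor $H^1(K_\ell,\mu_{p^n})=K_\ell^\times/(K_\ell^\times)^{p^n}$ is cyclic of order $p^n$ and is generated by a uniformizer (hence is entirely singular). Matching these computations against Definition~\ref{define_ord_condition} identifies $H^1_{\rm ord}(K_\ell,T_{f,n})$ with the $\mu_{p^n}$-summand and $H^1_{\rm ur}(K_\ell,T_{f,n})$ with the $\ZZ/p^n\ZZ$-summand, yielding the direct-sum decomposition.

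For the isotropy, recall that the Weil pairing identifies $T_{f,n}\simeq T_{f,n}^\ast(1)$, and under the decomposition \eqref{eqn_2023_04_23_1856} this self-duality is hyperbolic: the natural duality pairs $\mu_{p^n}$ against $\ZZ/p^n\ZZ$, while the restrictions $\mu_{p^n}\otimes\mu_{p^n}\to\mu_{p^n}$ and $\ZZ/p^n\ZZ\otimes\ZZ/p^n\ZZ\to\mu_{p^n}$ induced by the pairing are zero. Consequently, under local Tate duality the cup products on $H^1(K_\ell,\mu_{p^n})\otimes H^1(K_\ell,\mu_{p^n})$ and on $H^1(K_\ell,\ZZ/p^n\ZZ)\otimes H^1(K_\ell,\ZZ/p^n\ZZ)$ both vanish, so each summand is isotropic. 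Both summands are free of rank one over $\ZZ/p^n\ZZ$, so each has order exactly $\sqrt{|H^1(K_\ell,T_{f,n})|}$, which forces maximality of the isotropic subspaces.

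Finally, tensoring with $\LL_{m,n}$ (which is free over $\ZZ/p^n\ZZ$) preserves the decomposition and orthogonality statements, giving the asserted $\LL_{m,n}$-linear splitting and maximal isotropy at level $m$. The only real content is thus the base case, which is essentially \cite[Lemma~2.2.1]{howard06}; the mild obstacle is bookkeeping the compatibility between the $G_{K_\ell}$-equivariant decomposition \eqref{eqn_2023_04_23_1856} and the Weil pairing, which I would either verify directly using that the two Frobenius eigenvalues are $\{1,\ell^2\}$ on $T_{f,n}|_{G_{K_\ell}}$ (forcing the decomposition to align with the self-duality), or simply invoke from the cited source.
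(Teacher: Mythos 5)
Your proposal is correct and follows essentially the same route as the paper: reduce to the base case over $K_\ell$ via the Shapiro-lemma identification \eqref{eqn_2023_04_23_2146} (which rests on $\ell$ splitting completely in $K_m/K$), and then handle the base case, which the paper simply cites as \cite[Lemma~2.2.1]{howard06} while you unpack it by computing the unramified cohomology of $\mu_{p^n}$ and $\ZZ/p^n\ZZ$ using $p\nmid\ell^2-1$ and checking that the Weil self-duality is hyperbolic relative to \eqref{eqn_2023_04_23_1856}. The extra detail is consistent with Howard's proof and adds nothing different in substance.
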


\begin{proof}
    For $m=1$, this is \cite[Lemma 2.2.1]{howard06}, and the general case follows from \eqref{eqn_2023_04_23_2146}.
\end{proof}

To record the local properties of the Heegner classes, we introduce a triple of Selmer groups.

\begin{defn}
    \label{defn_2023_04_23_2146}
    We define the Selmer group
    $$H^1_{{\rm ord}_S,\emptyset}(K_m, \Tfn):=\ker\left(H^1_{S,\emptyset}(K_m,\Tfn)\lra \bigoplus_{\ell\mid S} \frac{H^1(\Kml,\Tfn)}{H^1_{\rm ord}(\Kml,\Tfn)}  \right)$$
    as the collection of cohomology classes in $H^1_{S,\emptyset}(K_m,\Tfn)$ that satisfy the ordinary condition at primes contained in $S$. We similarly define  $H^{1,\bullet}_{{\rm ord}_S}(K_m, \Tfn)$ for $\bullet\in\{+,-\}$.

Furthermore, we set
$$\widehat{H}^1_{{\rm ord}_S,\emptyset}(K_\infty,\Tfn):=\varprojlim_m {H}^1_{{\rm ord}_S,\emptyset}(K_m,\Tfn)\,,$$
and similarly define $\widehat{H}^{1,\bullet}_{{\rm ord}_S}(K_\infty,\Tfn)$ for $\bullet\in\{+,-\}$.
\end{defn}

\begin{lemma}
        \label{lemma_2023_04_23_2146}
We have
$$\kappa(S,n)_m\in H^1_{{\rm ord}_S,\emptyset}(K_m, \Tfn)\,.$$
\end{lemma}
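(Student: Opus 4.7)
The plan is to verify the local condition at each prime $\ell \mid S$, since the Selmer condition at primes away from $S$ and at $p$ is already encoded in the construction $\kappa(S,n)_m \in H^1_{S,\{\}}(K_m,T_{f,n})$. The key geometric input is the \v{C}erednik--Drinfeld $p$-adic uniformization of $X_S$ at a prime $\ell$ dividing the quaternionic discriminant $N^- S$, which implies that ${\rm Jac}(X_S)$ acquires \emph{purely toric} reduction at $\ell$. Consequently, the $p$-adic Tate module of the Jacobian, restricted to a decomposition group at $\ell$, sits in a $G_{\QQ_\ell}$-stable short exact sequence
\[
0 \lra \ZZ_p(1)^{g} \lra T_p({\rm Jac}(X_S)) \lra U \lra 0,
\]
where $g=\dim {\rm Jac}(X_S)$ and $U$ is an unramified $\ZZ_p$-lattice. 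This filtration is compatible with the projection \eqref{eqn_2023_04_06_1654} to $T_{f_S} \simeq T_{f,n}$: the rank-$g$ submodule of type $\mu_{p^\infty}$ surjects onto a copy of $\mu_{p^n}$ inside $T_{f,n}$, and $U$ surjects onto a copy of $\ZZ/p^n\ZZ$. Since the Frobenius eigenvalues on $T_{f,n}$ at $\ell$ are distinct modulo $p^n$ (by conditions (iii) and (iv) of Definition~\ref{def_admissible_prime_mod_pn}), these two pieces must coincide with the canonical summands of the decomposition \eqref{eqn_2023_04_23_1856} used to define $H^1_{\rm ord}(K_\ell,T_{f,n})$.

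Next, I would invoke the standard rigid-analytic (Mumford--Tate) uniformization of an abelian variety with purely toric reduction: locally at $\ell$, a Kummer cocycle for ${\rm Jac}(X_S)(K_{m,\ell})\otimes\ZZ_p$ arises from extracting $p^n$-th roots in the uniformizing torus, so that the image of the Kummer map
\[
{\rm Jac}(X_S)(K_{m,\ell}) \otimes_{\ZZ} \ZZ_p \lra H^1(K_{m,\ell},T_p({\rm Jac}(X_S)))
\]
is contained in the image of $H^1(K_{m,\ell},\ZZ_p(1)^g)$. Because the local condition at $\ell$ defining $H^1_{\rm f}$ is by definition the Kummer image, the class $z_m$ from \eqref{eqn_2023_3_31_1721} lies in this subgroup at each prime above $\ell$; here I use that $\ell$ splits completely in $K_m/K$, so the local picture at each $\lambda\mid\ell$ is identical and the semilocal statement \eqref{eqn_2023_04_23_2146} applies.

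Combining the two paragraphs, projecting via \eqref{eqn_2023_04_06_1654} and \eqref{eqn_2023_04_06_1212} sends the $\ell$-local component of $z_m$ into the image of $H^1(K_{m,\ell},\mu_{p^n})$ inside $H^1(K_{m,\ell},T_{f,n})$, which is precisely $H^1_{\rm ord}(K_{m,\ell},T_{f,n})$ by Definition~\ref{define_ord_condition}. Thus $\kappa(S,n)_m$ satisfies the ordinary condition at every $\ell\mid S$, while already being in $H^1_{S,\{\}}(K_m,T_{f,n})$ by \eqref{eqn_2023_04_23_2203}. This gives the desired conclusion $\kappa(S,n)_m\in H^1_{{\rm ord}_S,\{\}}(K_m,T_{f,n})$.

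The main obstacle is to pin down precisely that the Mumford--Tate toric filtration on $T_p({\rm Jac}(X_S))$ coincides, after projection to $T_{f,n}$, with the canonical splitting \eqref{eqn_2023_04_23_1856}, so that the geometric ``toric piece'' is identified with the arithmetically defined $\mu_{p^n}$-summand. This identification is standard for modular curves (and extends to quaternionic Shimura curves via \v{C}erednik--Drinfeld), but the clean functorial statement — together with the semisimplicity of Frobenius guaranteed by $p\nmid \ell^2-1$ — is the technical heart of the argument and is the point at which the admissibility hypotheses of Definition~\ref{def_admissible_prime_mod_pn} are used in full force.
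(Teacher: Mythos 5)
Your proposal is correct and follows the same route as the paper: the paper simply cites \cite[Lemma 8]{grossparson}, and what you have written out is precisely the \v{C}erednik--Drinfeld / Mumford-uniformization argument underlying that lemma, identifying the toric filtration on $T_p(\mathrm{Jac}(X_S))$ with the ordinary filtration on $T_{f,n}$ via the admissibility hypotheses. One small point of precision: the toric submodule of $T_p(\mathrm{Jac}(X_S))$ over $\QQ_\ell$ need not literally be $\ZZ_p(1)^g$ — the Galois action on the character lattice of the uniformizing torus can carry a nontrivial (finite, prime-to-$p$) twist — but this does not affect the identification after restricting to $G_{K_\ell}$ and reducing mod $p^n$, and the distinctness of Frobenius eigenvalues (condition $p\nmid\ell^2-1$) still pins the image of the toric piece to the $\mu_{p^n}$-summand of Definition~\ref{define_ord_condition}, exactly as you use it.
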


\begin{proof}
In view of \eqref{eqn_2023_04_23_2203} it suffices to check that 
$$\res_\lambda(\kappa(S,n)_m)\in H^1_{\rm ord}(K_{m,\lambda},\Tfn)=H^1_{\rm ord}(K_\ell,\Tfn), \qquad \forall \lambda\mid \ell.$$
This follows from the proof of \cite[Lemma 8]{grossparson}.
\end{proof}
We refer the reader to \S\ref{sec_Heegner_local_properties_at_p} for an analysis of the $p$-local properties of the Heegner classes.

\subsection{Shimura sets}
\label{subsec_Shimura_sets}
Fix an integer $$S\in \bigcup_n \fN_n^\de$$ and let $B_S$ denote the definite quaternion algebra over $\QQ$ of discriminant $N^-S$. 

Consider the zero-dimensional Shimura variety (``Shimura set'')  
$$X_S=X_{N^+,N^-S}:=B_S^\times \big\backslash \widehat{B}_S^\times\big/\widehat{R}_S^\times $$
attached to an Eichler order $R_S\subset B_S$ of level $N^+$, where
$\widehat{B}_S^\times:=B_S\otimes_{\ZZ}\widehat{\ZZ}$ and $\widehat{R}_S^\times:=R_S\otimes_{\ZZ}\widehat{\ZZ}$ as before.

As in \S\ref{subsubsec_522_2023_04_07}, define $x_m^0 \in X_S(K[p^m])$ as the image of
$$
\left[\begin{pmatrix}
    p^m & 0\\
0& 1
\end{pmatrix}\right]\in K^\times \big\backslash \widehat{K}^\times{\rm GL}_2 (\QQ_p)\widehat{B}_S^{(p)}\big/\widehat{R}_S^\times \subset K^\times \backslash \widehat{B}_S^\times/\widehat{R}_S^\times=X_S$$
and the degree-zero divisor
$$x_m:=(a_{q_0}(f)-q_0-1)^{-1}{\rm Tr}_{K[p^{d(m)}]/K_m}\,(T_{q_0}-q_0-1)[x_{m+1}^0]\in \ZZ_p[X_S]^0\,,$$
where $d(m)=\min\left\{d\in\ZZ_{\ge0}: K_m\subset K[p^{d}]\right\}$.

\subsubsection{Arithmetic level raising (bis)} 
\label{subsubsec_2024_10_04_1617}The Hecke algebra $\TT_S$  generated by $T_\ell$ for $\ell\nmid NS$ and $U_\ell$ for $\ell \mid NS$ acts on $\ZZ_p[X_S]^0$ via Picard functoriality. 

As before, a weak eigenform $h$ on $X_S$ modulo $p^n$ of weight $2$ is a ring homomorphism $\mathbb{T}_S\xrightarrow{e_h} \ZZ/p^n\ZZ$. In light of \cite[Lemma 4.5.5]{sweeting}, the choice of the weak eigenform $h$ yields\footnote{Note that the error term $C$ therein equals zero under our running hypotheses.} a group homomorphism $\ZZ_p[X_S]^0\lra \ZZ/p^n\ZZ$, which we denote by $h$ as well. 

We have the following key  existence (cf.~\cite[\S4.3]{pollack-weston11}, \cite[\S4.2]{chidahsiehanticyclomainconjformodformscomposito}, or \cite[proof of Theorem~5.15]{BertoliniDarmon2005}).

\begin{proposition} Let $E$ be a semistable elliptic curve and $p$ a prime such that
the hypothesis \eqref{item_Loc} holds.  Then for $S \in \fN_n^\de$ there exists a weak eigenform 
\begin{align}
    \begin{aligned}
    \label{eqn_5_9_2023_09_01_1755}
    f_S: \ZZ_p[X_S]^0\lra \ZZ/p^n\ZZ 
    \end{aligned}
\end{align}
satisfying \eqref{eq:f_S}.
\end{proposition} Such an $f_S$ is said to a level-raising the eigenform $f\mod p^n$ to level $S$. 

Put
\begin{align}
    \label{eqn_2023_04_10_1013}
    \begin{aligned}
         \lambda(S,n)_m^{(\sigma)}&:=f_S(\sigma\cdot x_m)\in \ZZ/p^n\ZZ\,,\qquad \sigma\in G_m:=\Gal(K_m/K)\,,\\
\lambda(S,n)_m&:=\sum_{\sigma\in G_m} \lambda(S,n)_m^{(\sigma)}\,\,\sigma\ \in \ \ZZ/p^n\ZZ[G_m]\,.
    \end{aligned}
\end{align}

\subsubsection{Norm relations (bis)} \label{S:norm-relations}
Let $ \pi_{m+1,m}: \ZZ/p^n\ZZ[G_{m+1}]\to  \ZZ/p^n\ZZ[G_m]$ denote the natural map induced by the projection on Galois groups, and $\xi_m:  \ZZ/p^n\ZZ[G_m] \to \ZZ/p^n\ZZ[G_{m+1}]$ denote the norm map given by $$G_m\ni \sigma\mapsto {\displaystyle\sum_{\substack{\widetilde{\sigma}\in G_{m+1} \\ \pi_{m+1,m}(\widetilde{\sigma})=\sigma}}\widetilde{\sigma}}.$$

The formal trace compatibility relations enjoyed by $\{x_m\}_m$ yields the following:
\begin{equation}
    \label{eqn_2023_04_17}
    \pi_{m+1,m}\lambda(S,n)_{m+1}=-\xi_{m-1}\lambda(S,n)_{m-1}\,,\qquad \forall\,m\in \ZZ^+\,.
\end{equation}

\begin{lemma}
\label{lemma_DI_Prop_2_8}
Fix positive integers $m$ and $n$. Let $\epsilon$ denote the sign of $(-1)^m$. We then have 
$$\omega_m^{\epsilon} \lambda(S,n)_{m}=0\,.$$
Moreover, there is a unique element 
$$L(S,n)_{m}^{\epsilon}\in \LL_{n}/(\omega_m)$$
such that $\lambda(S,n)_{m}=\widetilde{\omega}_m^{-\epsilon}L(S,n)_{m}^{\epsilon}$\,.
\end{lemma}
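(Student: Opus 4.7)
The plan is to reformulate both parts of the lemma as a single divisibility in the Iwasawa algebra and then induct on $m$, using the trace relation \eqref{eqn_2023_04_17}. Since $\omega_m = \omega_m^{\epsilon}\cdot\widetilde{\omega}_m^{-\epsilon}$ and $\omega_m^{\epsilon}$ is distinguished, hence a non-zero-divisor in $\LL_n:=(\ZZ/p^n\ZZ)[[\Gamma]]\simeq (\ZZ/p^n\ZZ)[[X]]$, the annihilation $\omega_m^{\epsilon}\lambda(S,n)_m=0$ in $\LL_n/(\omega_m)$ is equivalent to the divisibility $\widetilde{\omega}_m^{-\epsilon}\mid\lambda(S,n)_m$ there, i.e., to the existence of $L(S,n)_m^{\epsilon}$. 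Uniqueness of $L(S,n)_m^{\epsilon}$ modulo the ideal $(\omega_m^{\epsilon})$ follows by the same non-zero-divisor argument, identifying the annihilator of $\widetilde{\omega}_m^{-\epsilon}$ in $\LL_n/(\omega_m)$ with $(\omega_m^{\epsilon})/(\omega_m)$.

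For the induction, the base cases $m=0,1$ are trivial since $\widetilde{\omega}_0^{-}=\widetilde{\omega}_1^{+}=1$. For $m\geq 2$, I would first observe that, under the identification $\LL_n/(\omega_j)\simeq(\ZZ/p^n\ZZ)[G_j]$, the norm map $\xi_{j}$ is induced by multiplication by $\omega_{j+1}/\omega_j=\Phi_{j+1}(1+X)$, so that \eqref{eqn_2023_04_17} translates into the congruence
\[
\lambda(S,n)_m \equiv -\Phi_{m-1}(1+X)\cdot\lambda(S,n)_{m-2}\pmod{\omega_{m-1}}
\]
in $\LL_n$. Since $\epsilon=(-1)^m=(-1)^{m-2}$, the inductive hypothesis provides a lift of $\lambda(S,n)_{m-2}$ of the form $\widetilde{\omega}_{m-2}^{-\epsilon}\cdot\widetilde{L}\in\LL_n$. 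Substituting into the congruence above and invoking the two identities
\[
\Phi_{m-1}(1+X)\cdot\widetilde{\omega}_{m-2}^{-\epsilon}=\widetilde{\omega}_m^{-\epsilon}\qquad\text{and}\qquad \widetilde{\omega}_m^{-\epsilon}\mid\omega_{m-1}\,,
\]
one sees directly that $\lambda(S,n)_m$ is divisible by $\widetilde{\omega}_m^{-\epsilon}$ in $\LL_n/(\omega_m)$, completing the induction.

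The two displayed identities are the only nontrivial ingredients, and each is a routine parity check: $m-1$ has parity opposite to $m$, so $\Phi_{m-1}(1+X)$ is precisely the factor extending $\widetilde{\omega}_{m-2}^{-\epsilon}$ to $\widetilde{\omega}_{m-1}^{-\epsilon}$; and because $m$ itself contributes no new factor of its opposite parity, $\widetilde{\omega}_m^{-\epsilon}=\widetilde{\omega}_{m-1}^{-\epsilon}$ divides $\omega_{m-1}=X\widetilde{\omega}_{m-1}^{+}\widetilde{\omega}_{m-1}^{-}$. The main point requiring any care is thus the parity bookkeeping for the cyclotomic factors $\Phi_r(1+X)$ appearing in $\widetilde{\omega}_m^{\pm}$; once it is in place the argument is a formal manipulation of the trace relation inside the Iwasawa algebra.
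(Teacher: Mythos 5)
Your proposal is correct and supplies a complete, self-contained version of the argument that the paper delegates to \cite[Proposition 2.8]{darmoniovita}: it is the standard induction (in steps of $2$) via the three-term trace relation \eqref{eqn_2023_04_17}, together with the two parity identities $\Phi_{m-1}(1+X)\widetilde{\omega}_{m-2}^{-\epsilon}=\widetilde{\omega}_m^{-\epsilon}=\widetilde{\omega}_{m-1}^{-\epsilon}$ and $\widetilde{\omega}_{m-1}^{-\epsilon}\mid\omega_{m-1}$, both of which you verify correctly, and the observation that distinguished polynomials are non-zero-divisors in $(\ZZ/p^n\ZZ)[[X]]$. You are also right to flag that the uniqueness asserted for $L(S,n)_m^{\epsilon}$ is really uniqueness modulo $(\omega_m^{\epsilon})$ (equivalently, uniqueness in $\LL_n/(\omega_m^{\epsilon})$), consistent with how the quantity $\lambda(S,n)_m^{\pm}$ is subsequently used in Lemma~\ref{lemma_DI_Lemma_2_9}; the lemma statement is slightly imprecise on this point, but your reading is the intended one.
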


\begin{proof}
    This follows from \eqref{eqn_2023_04_17}, cf. \cite[Proposition 2.8]{darmoniovita}.
\end{proof}
For $L(S,n)_{m}^{\epsilon}$ as in the statement of Lemma~\ref{lemma_DI_Prop_2_8}, put
\begin{align}
        \label{defn_2023_04_17_1545}
        \begin{aligned}
            \lambda(S,n)_{m}^{+}&:=(-1)^{\frac{m}{2}}L(S,n)_{m}^{+},\qquad \hbox{if $m$ is even};\\
\lambda(S,n)_{m}^{-}&:=(-1)^{\frac{m+1}{2}}L(S,n)_{m}^{-},\qquad \hbox{if $m$ is odd\,,}
        \end{aligned}
\end{align}
to ensure the following compatibility:

\begin{lemma}
   \label{lemma_DI_Lemma_2_9}
 The sequence $\{\lambda(S,n)_{m}^{\pm}:\, {(-1)^m=\pm 1}\}$ is compatible with respect to natural projections 
 $$\LL_n/(\omega_{m+2}^\pm)\lra \LL_n/(\omega_{m}^\pm)\,, \qquad m\in \ZZ^+\,.$$
\end{lemma}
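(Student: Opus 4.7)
The plan is to combine the trace relation \eqref{eqn_2023_04_17} with the factorizations afforded by Lemma~\ref{lemma_DI_Prop_2_8}, and then cancel a distinguished polynomial factor. Fix $m\ge 1$ and let $\epsilon=(-1)^m$. Iterating \eqref{eqn_2023_04_17} once produces the identity
\[
\pi_{m+2,m+1}\,\lambda(S,n)_{m+2} \;=\; -\,\xi_m\,\lambda(S,n)_m \qquad \text{in } \Lambda_n/(\omega_{m+1}).
\]
On the right, a direct computation with the natural generator $\gamma\in\Gamma$ and $T=\gamma-1$ identifies $\xi_m$ with multiplication by $\Phi_{p^{m+1}}(1+T)$: if $\widetilde{\alpha}\in\Lambda_n$ lifts $\alpha\in\Lambda_n/(\omega_m)$, then $\xi_m(\alpha)\equiv\Phi_{p^{m+1}}(1+T)\,\widetilde{\alpha}\pmod{\omega_{m+1}}$. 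On the left, observe that $\widetilde{\omega}_{m+2}^{-\epsilon}=\widetilde{\omega}_m^{-\epsilon}\cdot\Phi_{p^{m+1}}(1+T)$, since $m+1$ has parity opposite to $m$ and is therefore the unique new index included in the product defining $\widetilde{\omega}^{-\epsilon}_{m+2}$ beyond $\widetilde{\omega}^{-\epsilon}_m$.

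Writing $\lambda(S,n)_{m+2}=\widetilde{\omega}_{m+2}^{-\epsilon}\widetilde{L}_{m+2}^\epsilon$ and $\lambda(S,n)_m=\widetilde{\omega}_m^{-\epsilon}\widetilde{L}_m^\epsilon$ for arbitrary lifts $\widetilde{L}_{\bullet}^\epsilon\in\Lambda_n$, and substituting into the trace identity, yields
\[
\widetilde{\omega}_m^{-\epsilon}\,\Phi_{p^{m+1}}(1+T)\,\bigl(\widetilde{L}_{m+2}^\epsilon+\widetilde{L}_m^\epsilon\bigr)\;\equiv\;0 \pmod{\omega_{m+1}}.
\]
Since $\omega_{m+1}=\omega_m\cdot\Phi_{p^{m+1}}(1+T)=\widetilde{\omega}_m^{-\epsilon}\,\Phi_{p^{m+1}}(1+T)\cdot\omega_m^{\epsilon}$, it suffices to cancel the factor $P:=\widetilde{\omega}_m^{-\epsilon}\,\Phi_{p^{m+1}}(1+T)$ to conclude
\[
\widetilde{L}_{m+2}^\epsilon+\widetilde{L}_m^\epsilon\;\equiv\;0 \pmod{\omega_m^\epsilon}.
\]
The cancellation is the only subtle point: $P$ is a distinguished polynomial in $\Lambda=\ZZ_p[[T]]$, so it is coprime to $p$, and a standard argument (lifting to $\Lambda$, where $\Lambda$ is a UFD with $p$ and $P$ coprime) shows that $P$ is a non-zero-divisor in $\Lambda_n=\Lambda/p^n\Lambda$. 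Hence $P\cdot x\equiv 0\pmod{P\cdot\omega_m^\epsilon}$ implies $x\equiv 0\pmod{\omega_m^\epsilon}$.

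It remains to verify that the sign convention \eqref{defn_2023_04_17_1545} absorbs the minus sign in \eqref{eqn_2023_04_17}. For $m$ even, $\lambda(S,n)_m^+=(-1)^{m/2}L_m^+$ and $\lambda(S,n)_{m+2}^+=(-1)^{(m+2)/2}L_{m+2}^+ = -(-1)^{m/2}L_{m+2}^+$; combining with $L_{m+2}^+\equiv -L_m^+\pmod{\omega_m^+}$ gives $\lambda(S,n)_{m+2}^+\equiv\lambda(S,n)_m^+\pmod{\omega_m^+}$. The case $m$ odd is identical with $(-1)^{(m+1)/2}$ in place of $(-1)^{m/2}$. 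I expect the main technical point to clarify is the non-zero-divisor property of $P$ in $\Lambda_n$; everything else is a bookkeeping exercise with the factorizations of $\omega_{m+1}$ and the explicit shape of $\xi_m$.
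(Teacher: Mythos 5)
Your argument is correct and fills in exactly the computation that the paper delegates to the citation of Darmon--Iovita, Lemma~2.9: apply the trace relation \eqref{eqn_2023_04_17} at level $m+1$, identify $\xi_m$ with multiplication by $\omega_{m+1}/\omega_m$, match the extra cyclotomic factor in $\widetilde{\omega}_{m+2}^{-\epsilon}/\widetilde{\omega}_m^{-\epsilon}$, and cancel the distinguished polynomial $P=\widetilde{\omega}_m^{-\epsilon}\cdot(\omega_{m+1}/\omega_m)$, which is a non-zero-divisor in $\LL_n$ because $\LL/(P)$ is a free $\Zp$-module (Weierstrass division), so multiplication by $P$ stays injective after reducing mod $p^n$. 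The only quibbles are cosmetic: the paper writes $\Phi_{m+1}$ for what you call $\Phi_{p^{m+1}}$, and you call a single substitution $m\mapsto m+1$ an ``iteration''; the sign bookkeeping with \eqref{defn_2023_04_17_1545} and the non-zero-divisor step are both handled correctly.
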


\begin{proof}
See \cite[Lemma 2.9]{darmoniovita}.
\end{proof}

In view of Lemma~\ref{lemma_DI_Lemma_2_9}, we define 
\begin{equation}
\label{eqn_2023_04_17_1607}
\lambda(S,n)^{\pm}:=\{\lambda(S,n)_{m}^{\pm}\,: (-1)^m=\pm 1\}\in \varprojlim_{(-1)^m=\pm 1} \LL_n/(\omega_m^\pm)\simeq \LL_n\,.
\end{equation}

\subsection{Level raising and rank lowering}
The following key proposition strengthens Zhang's principle of mod $p$ level raising and rank lowering in \cite{zhang14}. 

\begin{proposition}
    \label{prop_sweeting_lifting}
    In the setting of \S\ref{subsubsec_2024_10_04_1617}, 
    there exist infinitely many $S\in \fN_n^\de$ and newforms $g_{S} \in S_{2}(\Gamma_{0}(N_{0}S))$  with the following properties. 
    \begin{itemize}
    \item[a)] The Hecke field of $g_S$ is contained in $\QQ_p$.
    \item[b)] The form $g_S$ is twisted-Steinberg at $\ell\mid S$ with character\footnote{Recall that $\epsilon_\ell:\QQ_\ell^\times \to \{\pm 1\}$ is the unique character given by $\ell \mapsto \epsilon_\ell(f)$.} $\epsilon_\ell$.  Moreover, the local Galois representations ${\rho_f}_{\vert_{I_\ell}}$ and ${\rho_{g_S}}_{\vert_{I_\ell}}$ are isomorphic for all $\ell\nmid S$. 
    \item[c)] The $G_\QQ$-representations $T_{f,n}$ and $T_{g_S,n}$ are isomorphic. 
    \item[d)] The Bloch--Kato Selmer group ${\rm Sel}(K,A_{g_S})$ is finite.\\
    \end{itemize}
 Moreover, if $N_0$ is square-free and 
 the hypothesis \eqref{item_Loc} holds, then there exist infinitely many $S\in \fN_1^{\rm def}$ and newforms 
 $g_S \in S_2(\Gamma_0(N_0S))$ satisfying c) and d) above such that ${\rm Sel}(K,A_{g_S})=\{0\}$.
\end{proposition}

\begin{proof}
The existence of a newform $g_S$ satisfying (a)-(d) follows from the work of Fakhruddin--Khare--Patrikis \cite{FKP}, as shown in \cite[Theorem 6.2.4]{sweeting}
in combination with the vanishing $$d_{\texttt{Q}}=r_{\texttt{Q}}=0$$ of Proposition 6.3.5 in {\it loc. cit.} 
      
The final assertion is shown by Zhang in the course of the proof of \cite[Theorem~9.1]{zhang14}. Indeed, note that the mod $p$ Galois representation associated with the newform $f=f_{E}$ is surjective since $p\geq 5$ is a supersingular prime for the semistable elliptic curve $E$ (cf.~\cite[Proposition~2.1]{edi97}) and so the first three hypotheses of \cite[Theorem~9.1]{zhang14} are satisfied, which suffice for his method of mod $p$ level raising and rank lowering.
\end{proof}
\begin{remark}\label{rmk:lr}
For our main results, 
the `Moreover' part of Proposition~\ref{prop_sweeting_lifting}  suffices: 
Theorem~\ref{thm:main} assumes the hypothesis \eqref{item_Loc} and $N_0$ to be square-free.
With an eye towards eventual removal of these hypotheses, we include the above more general setting. In particular, the properties a) and b) are included due to their relevance to arguments in \cite{sweeting}. 
\end{remark}
\subsection{Interpolation formulas}
Throughout this subsection, fix a positive integer $n$.
\subsubsection{} For a positive integer $n$, fix $S\in \fN_n^\de$ and a $p$-primitive newform $$g=g_S\in S_2(\Gamma_0(N_0S))$$ as in  Proposition~\ref{prop_sweeting_lifting}.

This subsection concerns the values of the elements $\lambda(S,n)^\pm$ as in \eqref{eqn_2023_04_17_1607} at the trivial character $\one$ of $\Gamma$. The main result is Lemma~\ref{lem:interpolation} below.

For $m\ge0$, define $\lambda(g)_m\in \Zp[G_m]$ by 
\[
\lambda(g)_m = \sum_{\sigma\in G_m}g^{\JL}(\sigma\cdot x_m)\sigma,
\]
where $g^\JL$ is a $p$-primitive\footnote{Namely, $g^\JL$ is non-zero modulo $p$.} Jacquet--Langlands transfer of $g$ to $X_S$.  The latter exists since $S$ consists of admissible primes. 

Let $\alpha$ and $\beta$ be the two roots of the Hecke polynomial of $g$ at $p$. Denote by $\lambda(g)_{\alpha,m}$ and $\lambda(g)_{\beta,m}$ the 
theta elements attached to the $p$-stabilizations of $g$ with respect to these two roots, given as in \cite[Definition~4.1]{chidahseihanticycloLvaluescrelle}. More precisely, we have the formula
\[
\lambda(g)_{\clubsuit,m}=\frac{1}{\clubsuit^m}\lambda(g )_m-\frac{1}{\clubsuit^{m+1}}\xi_{m-1}\lambda(g )_{m-1}, \quad m\ge 1,\ \clubsuit\in\{\alpha,\beta\}.
\]
Here, $\xi_{m-1}$ is the norm map defined at the beginning of \S\ref{S:norm-relations}. For $m=0$, we have
\[
\lambda(g)_{\clubsuit,0}=\lambda(g )_0-\frac{1}{\clubsuit}\left(a_p\lambda(g)_0-\pi_{1,0}(\lambda(g )_1)
\right),\quad \clubsuit\in\{\alpha,\beta\}.
\]


As we are in the non-ordinary case, these elements are no longer integral. Instead, they are elements of $\Qp(\clubsuit)[G_m]$.
Lemma 4.2 of {\it op. cit.} tells us that these elements are compatible under $\pi_{m+1,m}$. In particular, 
\[\lambda(g)_{\clubsuit,1}(\one)=\lambda(g)_{\clubsuit,0}(\one)\]
for $\clubsuit\in\{\alpha,\beta\}$.
Furthermore, it follows from Theorem~4.6 of {\it op. } that
\begin{align}
\label{eqn_2023_09_06_1914}
\lambda(g)_{\clubsuit,0}(\one)^2=\lambda(g)_{\clubsuit,0}(\one)^2=u(1-\clubsuit^{-2})\cdot \frac{L(g/K,1)}{\Omega_g^{\mathrm{can}} \eta_{g,N^+,N^-S}}
\end{align}
for some $p$-adic unit $u$, where $\Omega_g^{\mathrm{can}} \eta_{g,N^+,N^-S}$ is the Gross period, given as in \cite[\S6.2]{zhang14}.

\begin{lemma}\label{lem:interpolation}
   We have $\lambda(S,n)^+(\one)=0$, whereas  $\lambda(S,n)^-(\one)^2$ equals 
   $$\frac{L(g/K,1)}{\Omega_g^{\mathrm{can}} \eta_{g,N^+,N^-S}} \mod p^n\,$$
  up to a $p$-adic unit.     
\end{lemma}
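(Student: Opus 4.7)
The plan is to unwind $\lambda(S,n)^\pm(\one)$ at the lowest levels of the tower, then apply the Heegner distribution relations together with $a_p(f)=0$, and finally invoke Gross's special-value formula for the level-raised newform $g$.

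First, I specialize. Since $\omega_0 = X$, $\widetilde{\omega}_0^-=1$, and hence $\omega_0^+ = X$, the projection $\LL_n \twoheadrightarrow \LL_n/(\omega_0^+)\simeq \ZZ/p^n\ZZ$ is exactly the augmentation at the trivial character, and by \eqref{defn_2023_04_17_1545} the image of $\lambda(S,n)^+$ is $\lambda(S,n)_0^+ = L(S,n)_0^+ = \lambda(S,n)_0 = f_S(x_0)$. Similarly $\omega_1^- = X\cdot \Phi_p(1+X)$ and $\widetilde{\omega}_1^+ = 1$, so $\lambda(S,n)_1^- = -\lambda(S,n)_1$ in $\LL_n/(\omega_1^-)$; pushing further to $\LL_n/(X) \simeq \ZZ/p^n\ZZ$ yields $\lambda(S,n)^-(\one) = -\lambda(S,n)_1|_{X=0} = -f_S(\Tr_{K_1/K}(x_1))$. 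It therefore suffices to compute $f_S(x_0)$ and $f_S(\Tr_{K_1/K}(x_1))$ modulo $p^n$.

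Next, I invoke the Heegner distribution relations. Unwinding the defining formulas for $x_0$ and $\Tr_{K_1/K}(x_1)$ and cancelling the Hecke factor $T_{q_0}-q_0-1$ (whose eigenvalue on $f_S$ is the $p$-adic unit $a_{q_0}(f)-q_0-1$), I obtain $f_S(x_0) \equiv f_S(\Tr_{K[p]/K}(x_1^0))$ and $f_S(\Tr_{K_1/K}(x_1)) \equiv f_S(\Tr_{K[p^2]/K}(x_2^0))$ modulo $p^n$. Since $p$ is inert in $K$, the base-layer trace relation reads $\Tr_{K[p]/K[1]}(x_1^0) = T_p\, x_0^0$, with no Frobenius correction, and for $m \ge 1$ one has $\Tr_{K[p^{m+1}]/K[p^m]}(x_{m+1}^0) = T_p\, x_m^0 - x_{m-1}^0$. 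Setting $y_K := \Tr_{K[1]/K}(x_0^0)$ and tracing down to $K$ (using $[K[p]:K[1]] = p+1$ for $p$ inert in $K$), I get $\Tr_{K[p]/K}(x_1^0) = T_p\, y_K$ and $\Tr_{K[p^2]/K}(x_2^0) = (T_p^2 - (p+1))\, y_K$. Applying $f_S$ and using $a_p(f_S) \equiv a_p(f) = 0 \pmod{p^n}$ yields $f_S(x_0) \equiv 0$ and $f_S(\Tr_{K_1/K}(x_1)) \equiv -(p+1)\, f_S(y_K) \pmod{p^n}$. Hence $\lambda(S,n)^+(\one) = 0$ and $\lambda(S,n)^-(\one) \equiv (p+1)\, f_S(y_K) \pmod{p^n}$.

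Finally, the level-raising congruence $f_S \equiv g^{\JL} \pmod{p^n}$ on $X_S$ (coming from \eqref{eqn_5_9_2023_09_01_1755} together with Proposition~\ref{prop_sweeting_lifting}(c)) yields $f_S(y_K) \equiv g^{\JL}(y_K) \pmod{p^n}$, and Gross's special-value formula in the definite quaternionic setting produces $g^{\JL}(y_K)^2 = u \cdot L(g/K,1)/(\Omega_g^{\mathrm{can}} \eta_{g,N^+,N^-S})$ for some $p$-adic unit $u$. Squaring $\lambda(S,n)^-(\one) \equiv (p+1)\, f_S(y_K)$ and observing that $(p+1)^2$ is a $p$-adic unit yields the stated interpolation formula. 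The main obstacle is to establish the $p$-integral form of Gross's formula (as opposed to merely an identity in $\Qp$); here the hypothesis $\Sel(K, A_{g_S}) = 0$ from Proposition~\ref{prop_sweeting_lifting}(d) is essential, since it forces the quotient $L(g/K,1)/(\Omega_g^{\mathrm{can}}\eta_{g,N^+,N^-S})$ to be a $p$-adic unit and so renders the congruence modulo $p^n$ meaningful.
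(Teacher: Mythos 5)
Your computation of $\lambda(S,n)^{\pm}(\one)$ via the CM distribution relations is correct: for $p$ inert one indeed has $\Tr_{K[p]/K[1]}(x_1^0)=T_p x_0^0$ with no correction term, and the recursion gives $\Tr_{K[p^2]/K}(x_2^0)=(T_p^2-(p+1))y_K$, whence $f_S(x_0)=a_p(f_S)f_S(y_K)=0$ and $f_S(\Tr_{K_1/K}(x_1))\equiv -(p+1)f_S(y_K)\pmod{p^n}$. This is a genuinely different route from the one in the paper. Where the paper works entirely with the $p$-stabilized theta elements $\lambda(g)_{\alpha,m}$, $\lambda(g)_{\beta,m}$ and eliminates the cross term $\fv_\alpha\fv_\beta$ algebraically (using the interpolation formula of \cite[Theorem~4.6]{chidahseihanticycloLvaluescrelle}, which is the one whose $p$-integrality is documented), your argument unwinds the distribution relations geometrically to express $\lambda(S,n)^-(\one)$ in terms of the single unstabilized quantity $g^{\JL}(y_K)$.

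The gap is exactly the one you flag in the last paragraph, but your proposed fix does not close it. You need the statement
$g^{\JL}(y_K)^2\,\dot{=}\,L(g/K,1)/\bigl(\Omega_g^{\mathrm{can}}\,\eta_{g,N^+,N^-S}\bigr)$
\emph{up to a $p$-adic unit}, i.e.\ a $p$-integral, $p$-optimal Waldspurger--Gross formula at the trivial character for $p$ \emph{inert}. The paper cites \cite[Theorem~1.2]{cst} for precisely this kind of statement only in the split case (Lemma~\ref{lem:app-interpolation}); the inert case is handled indirectly, using the Chida--Hsieh $p$-stabilized formula and the algebraic elimination, precisely so that one never has to invoke an unstabilized inert-prime formula directly. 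In effect, your distribution-relation computation plus the paper's Lemma~\ref{lem:interpolation} together \emph{prove} the unstabilized formula; asserting it as input is circular unless you supply an independent reference in the required $p$-integral form. Moreover, the hypothesis $\Sel(K,A_{g_S})=0$ from Proposition~\ref{prop_sweeting_lifting}(d) does not repair this: that hypothesis shows that the $L$-value ratio is a $p$-adic unit, but it says nothing about whether the constant $u$ in your asserted Gross formula is a $p$-adic unit. In fact Lemma~\ref{lem:interpolation} does not assume $\Sel(K,A_{g_S})=0$ at all; that hypothesis only enters later, in Proposition~\ref{prop:equality}, when the unit-ness of $\fu$ is actually needed.
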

\begin{proof}
    Since $\omega_m^\pm$ vanishes at $\one$, the equality 
    \[\lambda(S,n)^\pm(\one)=\lambda(S,n)_m^\pm(\one)\]
    holds for any $m\ge0$ such that $(-1)^m=\pm1$. In particular, 
    \begin{align*}
        \lambda(S,n)^+(\one)&=\lambda(S,n)_0^+(\one)=f_S( x_0),\\
        \lambda(S,n)^-(\one)&=\lambda(S,n)_1^-(\one)=-\lambda(S,n)_1(\one).
    \end{align*}
    As explained in \cite[\S2.4]{BD-mumford},  $x_0$ is a multiple of $T_p(x_0^0)$, where $x_0^0$ is the Heegner point of trivial conductor introduced in \S\ref{subsubsec_522_2023_04_07} and $T_p$ is the Hecke operator. Since $a_p(f)=0$, it follows that $f_S(x_0)=0$. In particular, $\lambda(S,n)^+(\one)=0$.

We next turn our attention to the computation of $\lambda(S,n)^-(\one)$. Recall that the newform $g$ is congruent to $f_S$ mod $p^n$. By definition, 
\begin{align*}
 \alpha^2\lambda(g)_{\alpha,1}-\beta^2\lambda(g)_{\beta,1}&=(\alpha-\beta)\lambda(g)_1,\\
\alpha\lambda(g)_{\alpha,0}-\beta\lambda(g)_{\beta,0}&=(\alpha-\beta)\lambda(g)_0.   
\end{align*}
Put 
$$\fu:=u\cdot \frac{L(g/K,1)}{\Omega_g^{\mathrm{can}} \eta_{g,N^+,N^-S}}\,,\qquad \fv_\clubsuit:=\lambda(g)_{\clubsuit,1}(\one)\,.$$ 
Then,
\begin{align*}
   (\alpha-\beta)^2 \lambda(g)_1(\one)^2&=(\alpha^4-\alpha^2+\beta^4-\beta^2)\fu-2\alpha^2\beta^2\fv_\alpha\fv_\beta\\
      (\alpha-\beta)^2 \lambda(g)_0(\one)^2&=(\alpha^2+\beta^2-2)\fu-2\alpha\beta\fv_\alpha\fv_\beta.
\end{align*}
Combining the two equations to cancel the term $\fv_\alpha\fv_\beta$ gives
\[
\lambda(g)_1(\one)^2-\alpha\beta\lambda(g)_0(\one)^2=\left(\frac{\alpha^3-\beta^3}{\alpha-\beta}-1\right)\fu.
\]
Thanks to our choice of $g$, it follows that 
$$\lambda(g)_0(\one)=g^{\rm JL}(x_0)\equiv f_S(x_0)=0 \mod p^n\,. $$
Here, all but the final equality follows from definitions, and the final equality is verified in the first paragraph of this proof.
Since the factor $\left(\frac{\alpha^3-\beta^3}{\alpha-\beta}-1\right)$ is a $p$-adic unit, the assertion on $\lambda(S,n)^-(\one)$ follows.
\end{proof}

\section{Signed bipartite Euler systems}
\label{sec_Heeg_classes_construction_reciprocity}
 The aim of this section is to introduce plus/minus Heegner point bipartite Euler systems in our inert supersingular setting based on the work of Howard \cite{howard06} and verify an analogue of his equality criterion, leading to the proof of Theorem~\ref{thm:main}. 
Another salient feature of this section 
 is the verification of $p$-local properties of these plus/minus classes
(cf.~\S\ref{sec_Heegner_local_properties_at_p}).

\subsection{General framework}
\label{S:generaltheory} 
We keep the notation of \S\ref{S:Shimura}. 

\begin{defn}
\label{defn_6_1_2023_04_17}
    For $\bullet\in\{+,-\}$,  a $\bullet$-bipartite Euler system for the $G_\QQ$-representation $T_f$ consists of two families of elements
    \[
    \{\hbox{\upshape\texttt K}(S,n)^\bullet\in \widehat{H}^{1,\bullet}_{{\rm ord}_S}(K_\infty,\Tfn):S\in\fN_n^\inde\},\quad \{\hbox{\upshape\texttt L}(S,n)^\bullet\in \Lambda_n:S\in\fN_n^\de\}
    \]
    for all positive integers $n$ with the following properties. 
    \begin{itemize}
        \item[i)]The elements $\hbox{\upshape\texttt K}(S,n)^\bullet$ and $\hbox{\upshape\texttt L}(S,n)^\bullet$ are compatible under the natural maps as $n$ varies.
        \item[ii)]For any $S\ell\in \fN_n^\inde$, there is an isomorphism of $\Lambda$-modules
        \[
        \varprojlim_m H^1_\ord (K_{m,\ell},\Tfn)\simeq \Lambda_n
        \]
        sending $\res_\ell(\hbox{\upshape\texttt K}(S\ell,n)^\bullet)$ to $\hbox{\upshape\texttt L}(S,n)^\bullet$.
        \item[iii)]For any $S\ell\in \fN_n^\de$, there is an isomorphism of $\Lambda$-modules
        \[
        \varprojlim_m H^1_\ur(K_{m,\ell},\Tfn)\simeq \Lambda_n
        \]
        sending $\res_\ell(\hbox{\upshape\texttt K}(S,n)^\bullet)$ to $\hbox{\upshape\texttt L}(S\ell,n)^\bullet$.
    \end{itemize}
\end{defn}

Note that when $S=1$ is the empty product, we in particular have
$$\hbox{\upshape\texttt K}(1,n)^\bullet \in \widehat{H}^{1,\bullet}(K_\infty,\Tfn)$$
for all positive integers $n$, and 
$$\hbox{\upshape\texttt K}(1)^\bullet:=\{\hbox{\upshape\texttt K}(1,n)^\bullet\}_n \in \varprojlim_n\widehat{H}^{1,\bullet}(K_\infty,\Tfn)=H^{1,\bullet}(K,\TT_{f}^\ac)\,$$
for $\TT_{f}^\ac:=T_f \otimes \Lambda$. 

The following links the plus/minus bipartite Euler systems with anticyclotomic Iwasawa theory. 
\begin{theorem}
\label{thm_bipartite_abstract_main}
For $\bullet\in \{+,-\}$, let $(\{\hbox{\upshape\texttt K}(S,n)^\bullet\},\{\hbox{\upshape\texttt L}(S,n)^\bullet\})$ be a $\bullet$-bipartite Euler system. Assume that $\hbox{\upshape\texttt K}(1)^\bullet$ is non-zero. 
Then the following holds.
\begin{itemize}
    \item[i)] We have $\rank_\Lambda\, H^{1,\bullet}(K,\TT_f^\ac)=\rank_\Lambda\,\Sel_\bullet(K_\infty,A_f)^\vee=1$. 
    \item[ii)] 
    We have
    $${\rm char}_{\LL}\left(H^{1,\bullet}(K,\TT_{f}^\ac)/\LL\cdot\hbox{\upshape\texttt K}(1)^\bullet \right)^2 \subset {\rm char}_{\LL}(\Sel_\bullet(K_\infty,A_f)^\vee_{\rm tor})\,.$$
    \item[iii)] The inclusion in part (ii) is an equality upon localization at a height one prime $\fP$ of $\LL$ under the following hypothesis: There exists a positive integer $k_0=k_0(\mathfrak{P})$ with the property that for an integer $n\geq k_0$, there exists an integer $S\in \fN_n^{\de}$ such that $\hbox{\upshape\texttt L}(S,n)^\bullet \in \LL_n$ has non-zero image in $\LL/(\mathfrak{P},p^n)$. In particular, if the latter is satisfied by all height one prime ideals $\fP$ of $\LL$, then the inclusion in part (ii) is an equality. 
\end{itemize}
\end{theorem}

Our proof of Theorem~\ref{thm_bipartite_abstract_main} is based on Howard's strategy via which he proved \cite[Theorem 3.2.3]{howard06}. We will describe the key steps in  \S\ref{subsec_proof_thm_bipartite_abstract_main}, highlighting
the differences. 

\subsection{Proof of Theorem~\ref{thm_bipartite_abstract_main}}
\label{subsec_proof_thm_bipartite_abstract_main}
\subsubsection{}
\label{subsubsec_2023_04_28_1240} We first introduce some notation.

Fix a height-one prime $\fP\neq(p)$ of $\LL$. Let  $\cO_\fP$ denote the integral closure of $\LL/\fP$ and put $\Phi_\fP:={\rm Frac}(\cO_\fP)$, which is a finite extension of $\QQ_p$. Recall that the modular parametrization of $E$ gives rise to an isomorphism $T_f\simeq T_p(E)$. 
To ease the notation, put $T=T_p(E)$ and $\TT=T\otimes \LL$, on which $G_K$ acts naturally.

Put $T_\fP:=T\otimes_{\ZZ_p}\cO_\fP$, which we endow with diagonal $G_K$-action, where the action on $\cO_\fP$ is given by the universal anticyclotomic character $$G_K\rightarrow \Lambda^\times \rightarrow (\LL/\fP)^\times \hookrightarrow \cO_\fP^\times.$$ Define $V_\fP:=T_\fP[\frac{1}{p}]=T\otimes_{\Zp} \Phi_\fP$, and $W_\fP:=V_{\fP}/T_{\fP}\simeq E[p^\infty]\otimes_{\ZZ_p}\cO_\fP$. We have a short exact sequence
\begin{equation}
    \label{eqn_3term_TVW_fP}
    0\lra T_\fP \lra V_\fP \lra W_\fP \lra 0\,.
\end{equation}
\subsubsection{} For $\bullet \in \{+,-\}$, we define the local conditions $\FFF_\fP^\bullet$ on $V_\fP$ as follows. 

If $q \nmid pN^-$ is a prime ideal of $\cO_K$, then
$H^1_{\cF_\fP^\bullet}(K_q,V_\fP)$ is the unramified classes. 

If $q\mid N^-$, then $H^1_{\cF_\fP^\bullet}(K_q,V_\fP)$ is generated by the image of the Kummer map:
$$H^1_{\cF_\fP^\bullet}(K_q,V_\fP)={\rm im}\left(E(K_q)\otimes_{\ZZ} \Phi_\fP \lra H^1(K_q,T_p(E))\otimes_{\ZZ_p}\Phi_\fP \lra H^1(K_q,V_\fP) \right)\,, \qquad q\mid N^-\,.$$

Let $w_1,\dots ,w_r$ be the primes of $K_\infty$ lying above $p$. Suppose that $\{\sigma_1,\dots,\sigma_r\}$ is a set of coset representatives of $\Gamma/\Gamma_{w_1}$ such that $w_i=\sigma_i\cdot w_1$ for $i=1,\dots,r$. Consider the following semi-local Coleman maps
\begin{align*}
\col^\bullet:H^1(K_p,\TT)=\bigoplus_{i=1}^rH^1(K_p,T\otimes\Zp[[\Gamma_{w_1}]])\cdot\sigma_i&\lra \Lambda\otimes\cO_{K_p}\\
    (z_i\sigma_i)_{1\le i\le r}&\longmapsto\sum_{i=1}^r\col^\bullet(z_i)\cdot \sigma_i,
\end{align*}
where $\col^\bullet:H^1(K_p,T\otimes\Zp[[\Gamma_{w_1}]])\rightarrow \Zp[[\Gamma_{w_1}]]\otimes\cO_{K_p}$ is the map given by $\col^\bullet_{T_f}$ defined in \S\ref{sec:coleman}.

Define $H^1_{\cF_\fP^\bullet}(K_p,V_\fP)$ as the $\Phi_\fP$-vector space generated by the image of 
$$H^{1,\bullet}(K_p,\TT):=\ker\left(H^1(K_p,\TT)\xrightarrow{{\rm Col}^\bullet} \LL\otimes \cO_{K_p}\right)$$
under the chain of natural morphisms 
$$H^1(K_p,\TT)\lra H^1(K_p,T\otimes \LL/\fP)\lra H^1(K_p,T_\fP)\lra H^1(K_p,V_\fP)\,. $$

\subsubsection{} Denote the local conditions on $T_\fP$ and $W_{\fP}$ that are propagated from $\FFF_\fP^\bullet$ on $V_\fP$ via \eqref{eqn_3term_TVW_fP}, in the sense of \cite[Example~1.1.2]{mr02}, also by $\FFF_\fP^\bullet$.

\subsubsection{}
\label{subsubsec_624_2023_04_19_1625} The above local conditions coincide with the ones in \cite[\S3.3]{howard06}, except for the case $q=p$. Accordinlgy, for primes $q\neq p$, the proof of Theorem~3.2.3 of {\it op. cit.} applies verbatim. 

In particular, the corresponding local conditions $\FFF_\fP^\bullet$ on $T_\fP$ are cartesian in the sense of \cite[Definition 1.1.4]{mr02} and \cite[Definition 2.2.2]{howard06} by  \cite[Lemma 3.3.4]{howard06}.

\subsubsection{} For $q=p$, the local conditions determined by $\FFF_\fP^\bullet$ on the quotients of $T_\fP$ are also cartesian as explained below. 

In view of \cite[Lemma 3.7.1]{mr02}, it suffices to check that the quotient $H^1(K_p,T_\fP)/H^1_{\FFF_\fP^\bullet}(K_p,T_\fP)$ is $\cO_\fP$-torsion free. This is clear, since 
$$H^1_{\FFF_\fP^\bullet}(K_p,T_\fP):=\ker\left(H^1(K_p,T_\fP)\lra \frac{H^1(K_p,V_\fP)}{H^1_{\FFF_\fP^\bullet}(K_p,V_\fP)} \right)$$
by the definition of the propagated local conditions $\FFF_\fP^\bullet$ on $T_\fP$.
\subsubsection{}
We further analyze the local conditions $\FFF_\fP^\bullet$ at $p$, which we will use to prove a local control theorem, namely  Proposition~\ref{prop_control_thm_at_p_fP} below.

\begin{lemma}
\label{lemma_2023_04_18_2107}
The natural map 
$H^1(K_p,\TT)\to H^1(K_p,T\otimes \LL/\fP)$ is surjective.
\end{lemma}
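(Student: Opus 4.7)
The plan is to reduce the surjectivity question to a vanishing statement in $H^2$, and then dispatch the latter using local Tate duality together with a Newton polygon computation. Since $\LL=\Zp[[\Gamma]]$ is a two-dimensional regular local ring, hence a UFD, the height-one prime $\fP$ is principal; write $\fP=(f)$. Because $\TT=T\otimes_{\Zp}\LL$ is free of rank $2$ over $\LL$, multiplication by $f$ is injective on $\TT$, yielding a short exact sequence of $G_{K_p}$-modules
\[
0 \lra \TT \xrightarrow{\;\cdot f\;} \TT \lra T\otimes\LL/\fP \lra 0.
\]
Taking the associated long exact sequence in Galois cohomology identifies the cokernel of $H^1(K_p,\TT)\to H^1(K_p, T\otimes\LL/\fP)$ with the $f$-torsion submodule of $H^2(K_p, \TT)$. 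Hence it will suffice to show $H^2(K_p,\TT)[f]=0$; I will in fact establish the stronger assertion $H^2(K_p, \TT) = 0$.

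By Shapiro's lemma, $H^2(K_p, \TT)$ is identified with the Iwasawa cohomology group $H^2_{\Iw}(K_{p,\infty}, T_pE) = \varprojlim_m H^2(K_{p,m}, T_pE)$, where $K_{p,\infty}/K_p$ denotes the localization of the anticyclotomic $\Zp$-tower. Applying local Tate duality at each finite layer, together with the Weil-pairing self-duality $T_pE \cong T_pE^*(1)$, and passing to the inverse limit, one obtains
\[
H^2(K_p, \TT)\;\cong\;\bigl(E[p^\infty](K_{p,\infty})\bigr)^\vee.
\]
Since $E$ has good supersingular reduction at $p$, the $p^\infty$-torsion of $E$ lies entirely in the formal group $\hat E$, which is of height $2$.

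The final step is a standard Newton polygon argument. A computation applied to $[p](X)/X \in \cO_{K_p}[[X]]$ shows that every nonzero $p$-torsion point of $\hat E$ has valuation $1/(p^2-1)$ (normalized so that $v(p)=1$). On the other hand, $K_p/\Qp$ is unramified (as $p$ is inert in $K$) and $K_{p,\infty}/K_p$ is totally ramified with Galois group $\Zp$, so the absolute ramification index of $K_{p,m}/\Qp$ equals $p^m$, and every valuation on $K_{p,\infty}^\times$ lies in $\bigcup_m \tfrac{1}{p^m}\ZZ$. Since $\gcd(p^2-1,\,p)=1$, the value $1/(p^2-1)$ is never attained, so $\hat E(K_{p,\infty})[p]=0$ and consequently $E[p^\infty](K_{p,\infty})=0$. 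The principal technical inputs---Iwasawa-theoretic local duality and the Newton polygon estimate---are both entirely standard, so I do not anticipate any serious obstacle; the only real subtlety is keeping track of the inversion twist in the duality, which does not affect the vanishing.
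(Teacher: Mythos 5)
Your argument is correct, and after the initial reduction both you and the paper target the vanishing of $H^2(K_p,\TT)$. The paper accomplishes this more economically: it observes that $H^2(K_p,E[p])\cong H^0(K_p,E[p])^\vee=0$ by finite-level local Tate duality (combined with Weil self-duality) together with the standard fact that a good supersingular elliptic curve has no rational $p$-torsion over an unramified extension of $\Qp$; since $K_p$ has cohomological dimension two, a double application of topological Nakayama (first modulo $\gamma-1$, then modulo $p$) then forces $H^2(K_p,\TT)=0$. You instead pass to the top of the anticyclotomic tower, computing $H^2(K_p,\TT)\cong H^2_{\Iw}(k_\infty,T)\cong \bigl(E[p^\infty](k_\infty)\bigr)^\vee$ and killing the latter group outright by the Newton polygon estimate that every nonzero element of $\hat E[p]$ has valuation $1/(p^2-1)$, which is unattainable in $k_\infty$ because the absolute ramification index over $\Qp$ at every finite layer is a power of $p$. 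Your version is a bit longer but makes the ramification input fully explicit and establishes the stronger vanishing $E[p^\infty](k_\infty)=0$ directly; both routes are rigorous.
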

\begin{proof}
Note that $$H^2(K_p,E[p])\simeq H^0(K_p,E[p])^\vee=\{0\},$$ where the latter holds since $p$ is a prime of supersingular reduction. Moreover, since the natural map $H^2(K_p,\TT)\to H^2(K_p,E[p])$ 
is surjective, it follows from Nakayama's lemma that $H^2(K_p,\TT)=\{0\}$. The proof concludes since 
$${\rm coker}\,(H^1(K_p,\TT)\lra H^1(K_p,T\otimes \LL/\fP))=H^2(K_p,\TT)[\fP].$$
\end{proof}

In view of Lemma~\ref{lemma_2023_04_18_2107}, we may define the map
\begin{equation}
\label{eqn_2023_04_18_2119}
\col^\bullet_{/\fP}\,:\quad H^1(K_p,T\otimes \LL/\fP)\lra \LL/\fP \otimes \cO_{K_p}
\end{equation}
 for $\bullet\in\{+,-\}$, 
via the following commutative diagram:
\begin{equation}
\begin{aligned}
\label{eqn_2023_04_18_2120}
    \xymatrix{
    H^1(K_p,\TT)\ar[rr]^(.58){\col^\bullet}\ar@{->>}[d]&& \LL\otimes\cO_{K_p}\ar@{->>}[d]\\
H^1(K_p,T\otimes \LL/\fP)\ar[rr]_(.58){\col^\bullet_{/\fP}}&& \LL/\fP \otimes \cO_{K_p}.
}
\end{aligned}
\end{equation}

Let us define $H^{1,\bullet}(K_p,T\otimes \LL/\fP)$ as the image of $H^{1,\bullet}(K_p,\TT)$. Moreover, since the map $\col^\bullet$ is surjective (cf. \S\ref{sec:coleman}), it follows from \eqref{eqn_2023_04_18_2120} that the map $\col^\bullet_{/\fP}$ is surjective as well.

The following diagram with cartesian squares and exact rows
\begin{equation}
\begin{aligned}
\label{eqn_2023_04_18_2127}
    \xymatrix{
 0\ar[r]&H^{1,\bullet}(K_p,\TT)\ar@{-->}[d]_{a_\bullet} \ar[r] & H^1(K_p,\TT)\ar[r]^(.58){\col^\bullet}\ar@{->>}[d]_{a}& \LL\otimes\cO_{K_p}\ar@{->>}[d]^b\ar[r] &0\\
 0\ar[r]& \ker(\col^\bullet_{/\fP})\ar[r]&H^1(K_p,T\otimes \LL/\fP)\ar[r]_(.58){\col^\bullet_{/\fP}}& \LL/\fP \otimes \cO_{K_p}\ar[r]&0
}
\end{aligned}
\end{equation}
shows that 
\begin{equation}
    \label{eqn_2023_04_18_2132}
    H^{1,\bullet}(K_p,T\otimes \LL/\fP)={\rm im}(a_\bullet)\,\subset\,  \ker(\col^\bullet_{/\fP})\,.
\end{equation}
Moreover, observe that $\ker(a)=\fP H^1(K_p,\TT)$ maps surjectively onto $\ker(b)=\fP \otimes_{\Zp} \cO_{K_p}$. As a result, it follows from the snake lemma applied with the diagram \eqref{eqn_2023_04_18_2127} that the map $a_\bullet$ is surjective. Combining this fact with \eqref{eqn_2023_04_18_2132}, we conclude that
\begin{equation}
    \label{eqn_2023_04_18_2139}
    H^{1,\bullet}(K_p,T\otimes \LL/\fP)=\ker\left(H^1(K_p,T\otimes \LL/\fP)\xrightarrow{\col^\bullet_{/\fP}} \LL/\fP\otimes_{\Zp} \cO_{K_p}\right)\,.
\end{equation}

Applying the functor $-\otimes_{\LL/\fP}\cO_\fP$ to \eqref{eqn_2023_04_18_2119}, we have a surjective morphism
$$\col^\bullet_{\fP}\,:\quad H^1(K_p,T_\fP)\lra \cO_\fP \otimes_{\Zp} \cO_{K_p}\,.$$

\begin{lemma}
    \label{lemma_2023_04_18_2148} 
    We have
    $$H^1_{\FFF_\fP^\bullet}(K_p,T_\fP)=\ker\left(\col^\bullet_{\fP}\right)\simeq  H^{1,\bullet}(K_p,T\otimes \LL/\fP)\otimes_{\LL/\fP}\cO_\fP\,.$$
\end{lemma}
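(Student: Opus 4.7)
The plan is to treat the two asserted identifications separately: first the equality $H^1_{\FFF_\fP^\bullet}(K_p,T_\fP)=\ker(\col^\bullet_\fP)$ coming from the definition of the propagated local condition, and then the isomorphism with the base change of $H^{1,\bullet}(K_p,T\otimes\LL/\fP)$ via a Tor-vanishing argument.

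For the first equality, I would begin by unwinding the definition of the propagated condition together with \eqref{eqn_3term_TVW_fP}, which gives
\[
H^1_{\FFF_\fP^\bullet}(K_p,T_\fP)=\{x\in H^1(K_p,T_\fP):\ \text{image in } H^1(K_p,V_\fP)\ \text{lies in}\ H^1_{\FFF_\fP^\bullet}(K_p,V_\fP)\}.
\]
Since $T_\fP$ is a finite free $\cO_\fP$-module, one has $H^1(K_p,V_\fP)=H^1(K_p,T_\fP)\otimes_{\cO_\fP}\Phi_\fP$, so that $H^1_{\FFF_\fP^\bullet}(K_p,V_\fP)$ is precisely the $\Phi_\fP$-span of the image of $H^{1,\bullet}(K_p,\TT)$, and equals $\ker(\col^\bullet_\fP)\otimes_{\cO_\fP}\Phi_\fP$. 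Because the target $\cO_\fP\otimes\cO_{K_p}$ of $\col^\bullet_\fP$ is $\cO_\fP$-torsion-free, a class $x\in H^1(K_p,T_\fP)$ satisfies $\col^\bullet_\fP(x)\otimes 1=0$ in $\Phi_\fP\otimes\cO_{K_p}$ if and only if $\col^\bullet_\fP(x)=0$. This gives $H^1_{\FFF_\fP^\bullet}(K_p,T_\fP)=\ker(\col^\bullet_\fP)$.

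For the second identification, I would invoke the short exact sequence recorded in \eqref{eqn_2023_04_18_2139},
\[
0\lra H^{1,\bullet}(K_p,T\otimes\LL/\fP)\lra H^1(K_p,T\otimes\LL/\fP)\xrightarrow{\col^\bullet_{/\fP}}\LL/\fP\otimes_{\Zp}\cO_{K_p}\lra 0,
\]
and tensor it over $\LL/\fP$ with $\cO_\fP$. The rightmost term is $\LL/\fP$-free of rank $[\cO_{K_p}:\Zp]=2$, hence $\LL/\fP$-flat, so $\mathrm{Tor}_1^{\LL/\fP}(\LL/\fP\otimes\cO_{K_p},\cO_\fP)=0$. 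The diagram used to define $\col^\bullet_\fP$ (``applying $-\otimes_{\LL/\fP}\cO_\fP$'') identifies the middle term of the base-changed sequence with $H^1(K_p,T_\fP)$ and the surjection with $\col^\bullet_\fP$, and we obtain the short exact sequence
\[
0\lra H^{1,\bullet}(K_p,T\otimes\LL/\fP)\otimes_{\LL/\fP}\cO_\fP\lra H^1(K_p,T_\fP)\xrightarrow{\col^\bullet_\fP}\cO_\fP\otimes\cO_{K_p}\lra 0.
\]
Combining with the first step gives the desired isomorphism.

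The main obstacle is the middle identification used in the second step, namely that $H^1(K_p,T\otimes\LL/\fP)\otimes_{\LL/\fP}\cO_\fP\simeq H^1(K_p,T_\fP)$. Strictly speaking this is built into the definition of $\col^\bullet_\fP$ in the paragraph immediately preceding the lemma, but to make the argument airtight one should justify the compatibility: either appeal to flatness of $\cO_\fP$ over the generic point of $\LL/\fP$ combined with torsion-freeness of the relevant cohomology (enough for our purposes since the right-hand term of the sequence is $\cO_\fP$-torsion-free), or use the two-term resolution of $T_\fP$ by free $\cO_\fP[G_{K_p}]$-modules coming from the finite cohomological dimension of $G_{K_p}$ and functoriality of the Coleman map under reduction modulo $\fP$. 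Once this compatibility is in place, the Tor-vanishing argument yields the asserted isomorphism without further complications.
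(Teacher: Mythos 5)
Your proposal follows the same two-step strategy as the paper: derive the equality $H^1_{\FFF_\fP^\bullet}(K_p,T_\fP)=\ker(\col^\bullet_\fP)$ from the definition of the propagated condition together with \eqref{eqn_2023_04_18_2139}, and then obtain the second identification by applying $-\otimes_{\LL/\fP}\cO_\fP$ to the short exact sequence
\[
0\lra H^{1,\bullet}(K_p,T\otimes\LL/\fP)\lra H^1(K_p,T\otimes\LL/\fP)\xrightarrow{\col^\bullet_{/\fP}}\LL/\fP\otimes_{\Zp}\cO_{K_p}\lra 0
\]
and noting the vanishing of the relevant $\mathrm{Tor}_1$ because the quotient $\LL/\fP\otimes_{\Zp}\cO_{K_p}\cong(\LL/\fP)^{\oplus 2}$ is $\LL/\fP$-free. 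This is precisely the paper's argument (the paper's notation $\mathrm{Tor}_1^{\LL/\fP}(\LL/\fP,\cO_\fP)=0$ is the same observation after this reduction), so the approach is not new.

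The one place you go beyond the paper is in flagging the implicit identification $H^1(K_p,T\otimes\LL/\fP)\otimes_{\LL/\fP}\cO_\fP\simeq H^1(K_p,T_\fP)$, which the paper uses silently when it says that applying $-\otimes_{\LL/\fP}\cO_\fP$ to $\col^\bullet_{/\fP}$ yields a map with source $H^1(K_p,T_\fP)$. This is a legitimate point to raise, and your suggested fix via a two-term free resolution of the cochain complex is the right one; the universal-coefficient obstruction term is $\mathrm{Tor}_1^{\LL/\fP}(H^2(K_p,T\otimes\LL/\fP),\cO_\fP)$, which vanishes because $H^2(K_p,T\otimes\LL/\fP)=0$ by local duality and the supersingularity of $E$ at $p$ (exactly the vanishing $H^0(K_p,E[p])=0$ used in Lemma~\ref{lemma_2023_04_18_2107}). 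With that noted, your argument is complete and correct. One small stylistic caveat: in the first half you assert $H^1_{\FFF_\fP^\bullet}(K_p,V_\fP)=\ker(\col^\bullet_\fP)\otimes_{\cO_\fP}\Phi_\fP$ before having established the second identification; logically the Tor argument (and the surjectivity of $a_\bullet$ from the diagram \eqref{eqn_2023_04_18_2127}) should come first, since it is what lets you rewrite the $\Phi_\fP$-span of the image of $H^{1,\bullet}(K_p,\TT)$ as $\ker(\col^\bullet_\fP)\otimes_{\cO_\fP}\Phi_\fP$. Reordering the two halves would remove the apparent circularity.
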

\begin{proof}
    The first identification follows from \eqref{eqn_2023_04_18_2139} and the definition of $H^1_{\FFF_\fP^\bullet}(K_p,T_\fP)$. The final isomorphism follows from the fact that the map $\col^\bullet_{/\fP}$ is surjective and ${\rm Tor}_1^{\LL/\fP}(\LL/\fP,\cO_\fP)=0$.
\end{proof}

\subsubsection{} Let $\iota:\Gamma\to\Gamma$ denote the involution $\gamma\mapsto \gamma^{-1}$. As explained in \cite{howard04}, we have a perfect $G_K$-invariant pairing 
$$T_{\fP^\iota}\times W_{\fP}\lra \mu_{p^\infty}\,.$$
Passing to the dual of the natural map $T\otimes \LL/\fP^\iota \hookrightarrow T_{\fP^\iota}$ via the perfect pairing above, we have a $G_K$-equivariant map
\begin{equation}
\label{eqn_2023_04_19_1601}
    W_\fP\lra \left(\varinjlim_n \Ind_{K_n/K} E[p^\infty]\right)[\fP]
\end{equation}
of $\LL$-modules. 
\subsubsection{} 
The following control theorem for signed local conditions at $p$ is a technical ingredient in our later arguments. 
\begin{proposition}
    \label{prop_control_thm_at_p_fP}
Let $\fP\neq p\Lambda$ be a height-one prime and $\bullet\in\{+,-\}$. Let  $k_\infty$ be the anticyclotomic $\ZZ_p$-extension of the unramified quadratic extension $K_p$ of $\QQ_p$.
Then the maps \[
H^{1,\bullet}(K_p,T\otimes\Lambda/\fP)\lra H^1_{\cF^\bullet_\fP}(K_p,T_\fP),
\]
\[
H^1_{\cF^\bullet_\fP}(K_p,W_\fP)\lra H^1_\bullet(k_\infty,E[p^\infty])[\fP],
\]
where the latter is induced from \eqref{eqn_2023_04_19_1601}, have finite kernels and cokernels bounded by constants that depend only on $[\cO_\fP:\LL/\fP]$.

\end{proposition}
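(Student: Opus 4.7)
The plan is to reduce both maps to natural base-change maps under the inclusion $\LL/\fP \hookrightarrow \cO_\fP$, and to control the error using the finiteness of the conductor cokernel $C := \cO_\fP/(\LL/\fP)$.

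First I would handle the compact-cohomology map. By Lemma~\ref{lemma_2023_04_18_2148} there is a natural isomorphism
\[
H^1_{\cF^\bullet_\fP}(K_p, T_\fP) \;\simeq\; H^{1,\bullet}(K_p, T\otimes \LL/\fP) \otimes_{\LL/\fP} \cO_\fP,
\]
so the map in question becomes the scalar-extension map $M \to M \otimes_{\LL/\fP} \cO_\fP$ with $M := H^{1,\bullet}(K_p, T\otimes \LL/\fP)$. Since $\fP\ne p\LL$ is of height one and $\LL$ is excellent, $\cO_\fP$ is a finite $\LL/\fP$-module and $C$ is finite of order $[\cO_\fP : \LL/\fP]$; tensoring the short exact sequence $0\to \LL/\fP\to \cO_\fP\to C\to 0$ with $M$ shows that the kernel and cokernel of $M \to M \otimes_{\LL/\fP} \cO_\fP$ are quotients of $\mathrm{Tor}_1^{\LL/\fP}(M,C)$ and $M\otimes_{\LL/\fP} C$ respectively, and each is killed by the conductor ideal $\mathrm{ann}_{\LL/\fP}(C)$. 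Since $M$ is a finitely generated $\LL/\fP$-module (being a submodule of the finitely generated module $H^1(K_p, T\otimes \LL/\fP)$) whose generic rank is intrinsic, these are finite of order bounded in terms of $[\cO_\fP : \LL/\fP]$.

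For the discrete-cohomology map, I would combine Shapiro's lemma with a dual base-change argument. By \eqref{item_cp}, the prime $p$ has a unique prime above it in each finite layer $K_n/K$; Shapiro's lemma thus yields
\[
H^1\!\left(K_p,\, \varinjlim_n \Ind_{K_n/K} E[p^\infty]\right) \;\simeq\; \varinjlim_n H^1(k_n, E[p^\infty]) \;=\; H^1(k_\infty, E[p^\infty]).
\]
The map \eqref{eqn_2023_04_19_1601} is, by construction, Pontryagin dual to the inclusion $T\otimes \LL/\fP^\iota \hookrightarrow T_{\fP^\iota}$, whose cokernel $\cO_{\fP^\iota}/(\LL/\fP^\iota)$ has the same order as $C$ since $\iota$ is an involution of $\LL$. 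Thus \eqref{eqn_2023_04_19_1601} has finite kernel and cokernel of bounded order; a standard devissage via the long exact $G_{K_p}$-cohomology sequences attached to its kernel and image (together with the snake lemma comparing $H^1((-)[\fP])$ with $H^1(-)[\fP]$) propagates this bound to the induced map $H^1(K_p, W_\fP) \to H^1(k_\infty, E[p^\infty])[\fP]$.

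The main obstacle is verifying the compatibility of the local conditions at $p$: the image of $H^1_{\cF^\bullet_\fP}(K_p, W_\fP)$ must land in the signed subgroup $H^1_\bullet(k_\infty, E[p^\infty])[\fP]$, with the same quantitative control. Unwinding definitions, the local condition on the source is characterized as the kernel of $\col^\bullet_{/\fP}$ via \eqref{eqn_2023_04_18_2139} and Lemma~\ref{lemma_2023_04_18_2148}, while the local condition on the target is the orthogonal complement, under the local Tate pairings at each finite layer, of the Coleman kernels $\ker(\col^\bullet_{\Tfn})$ from \S\ref{sec:coleman}. The required matching reduces to a duality identity between these two kernels under the pairing $T_{\fP^\iota}\times W_\fP \to \mu_{p^\infty}$, after taking $\fP$-torsion; this is the step where the explicit construction of $\col^\bullet$ via the system of local points $\{d_m^\pm\}$ from Theorem~\ref{thm:Q-inert} is indispensable.
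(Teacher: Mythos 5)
Your treatment of the first map is correct and coincides with the paper's: Lemma~\ref{lemma_2023_04_18_2148} identifies $H^1_{\cF^\bullet_\fP}(K_p,T_\fP)$ with $H^{1,\bullet}(K_p,T\otimes\LL/\fP)\otimes_{\LL/\fP}\cO_\fP$, and the conductor argument with $C=\cO_\fP/(\LL/\fP)$ gives the bound.

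For the second map, however, you leave a genuine gap and in the last paragraph point in the wrong direction. You build a comparison of the \emph{ambient} groups $H^1(K_p,W_\fP)$ and $H^1(k_\infty,E[p^\infty])[\fP]$ via Shapiro's lemma and devissage along \eqref{eqn_2023_04_19_1601}, and then you explicitly flag ``the main obstacle'' of matching the Selmer local conditions on the two sides, which you do not resolve. Moreover, the claim that this matching ``is the step where the explicit construction of $\col^\bullet$ via the system of local points $\{d_m^\pm\}$ \dots is indispensable'' is misleading: once the signed local conditions are in place (and that is where the $d_m^\pm$ enter), the matching is a formal consequence of orthogonality, not something one needs to revisit at the level of explicit local points.

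The paper's argument avoids all of this. One applies the first part of the proposition with $\fP^\iota$ in place of $\fP$ to obtain that $H^{1,\bullet}(K_p,T\otimes\LL/\fP^\iota)\to H^1_{\cF^\bullet_{\fP^\iota}}(K_p,T_{\fP^\iota})$ has kernel and cokernel bounded by a constant depending only on $[\cO_{\fP^\iota}:\LL/\fP^\iota]=[\cO_\fP:\LL/\fP]$. Under the perfect pairing $T_{\fP^\iota}\times W_\fP\to\mu_{p^\infty}$ (and the $\LL$-adic pairing at the level of the tower), $H^1_{\cF^\bullet_\fP}(K_p,W_\fP)$ is by construction the exact annihilator of $H^1_{\cF^\bullet_{\fP^\iota}}(K_p,T_{\fP^\iota})$, and $H^1_\bullet(k_\infty,E[p^\infty])[\fP]$ is the exact annihilator of $H^{1,\bullet}(K_p,T\otimes\LL/\fP^\iota)$ (this is the content of Definition~\ref{def:signedSelmer} and the self-duality recorded in Lemma~\ref{lemma_howard_06_Lemma334}). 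Passing to annihilators interchanges kernel and cokernel and preserves the bound, giving the second claim directly. This is exactly the duality step borrowed from the final paragraph of the proof of Lemma~2.2.7 in~\cite{howard04}. In short: you should derive the second statement from the first (with $\fP^\iota$) by local Tate duality rather than by an independent comparison of ambient cohomology followed by a local-condition check, because it is precisely the duality that makes the local conditions line up for free.
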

\begin{proof}
The first claim is clear thanks to Lemma~\ref{lemma_2023_04_18_2148}. The second claim\footnote{This argument is based on the final paragraph of \cite[proof of Lemma~2.2.7]{howard04}.} follows from the first claim applied to $\fP^\iota$ instead $\fP$ and duality. 
\end{proof}

\subsubsection{} \label{S:controls}
We describe further preliminaries towards our proof of Theorem~\ref{thm_bipartite_abstract_main}, which are based on  \cite[Proof~of~Theorem~3.2.3]{howard06}. Although the contents of \S\S\ref{S:controls}--\ref{S:conclude} closely follow {\it loc. cit.} we include details for convenience of the reader, highlighting peculiarities of our setting. 

For $X=T_\fP, W_\fP$, or their sub-quotients on which the local conditions are prescribed by propagation, denote by $H^1_{\cF_\fP^\bullet}(K,X)$ the Selmer group determined by the local conditions $\cF_\fP^\bullet$.

\begin{proposition}
    \label{prop_Howard_Prop_331}
    The natural maps
    $$H^{1,\bullet}(K,\TT)/\fP H^{1,\bullet}(K,\TT_{f}^\ac) \lra H^1_{\cF_\fP^\bullet}(K,T_\fP)\qquad\qquad H^1_{\cF_\fP^\bullet}(K,W_\fP)\lra {\rm  Sel}_\bullet (K_\infty/K, E[p^\infty])[\fP]$$
    satisfy the following properties. 
    \begin{itemize}
    \item[i)] The first map is injective.
    \item[ii)] There is a finite set of height-one primes $\Sigma_\LL$ of $\LL$ such that if $\fP\not\in \Sigma_\LL$, then these maps have finite kernel and cokernel which are bounded by a constant depending only on $[\cO_\fP:\LL/\fP]$.
    \end{itemize}
\end{proposition}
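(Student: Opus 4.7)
The plan is to follow closely the proof of \cite[Proposition 3.3.1]{howard06}, reusing Howard's arguments at all primes $q \neq p$ verbatim --- this is licensed by the discussion in \S\ref{subsubsec_624_2023_04_19_1625} --- and substituting Proposition~\ref{prop_control_thm_at_p_fP} for the local control input at $p$. The backbone is a standard Poitou--Tate diagram chase comparing the Selmer group cut out on $\TT$ by the signed condition with the Selmer group cut out on $T_\fP$ (resp.\ $W_\fP$) by the propagated condition $\cF_\fP^\bullet$.

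For part (i), since $\Lambda \cong \Zp[[T]]$ is a UFD, I would write $\fP = (\xi)$ and consider the $G_K$-equivariant short exact sequence
\[
0 \lra \TT \xrightarrow{\,\xi\,} \TT \lra T \otimes \Lambda/\fP \lra 0\,.
\]
The resulting long exact sequence produces a Bockstein injection $H^1(K,\TT)/\xi H^1(K,\TT) \hookrightarrow H^1(K, T \otimes \Lambda/\fP)$. Restricting to classes satisfying the signed local conditions at all primes (using that the analogue of Lemma~\ref{lemma_2023_04_18_2107} at global level together with the definition of $\col^\bullet_{/\fP}$ in \eqref{eqn_2023_04_18_2119} transports $H^{1,\bullet}(K,\TT)$ inside $H^{1,\bullet}(K, T\otimes\Lambda/\fP)$), and then extending scalars along the finite injection $\Lambda/\fP \hookrightarrow \cO_\fP$, yields the map in the statement. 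Injectivity is inherited from the Bockstein injection, since the local conditions on $T_\fP$ are propagated from those on $V_\fP$, which in turn are generated by the image of the signed conditions on $\TT$.

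For part (ii), the kernels and cokernels of both maps can be bounded by combining the Bockstein exact sequence with the local control maps at each place. Away from a finite exceptional set $\Sigma_\LL$ --- which I would take to contain $p\LL$, the finitely many primes at which $T \otimes \Lambda/\fP$ acquires $G_K$-invariants (finite by the absolute irreducibility of $\bar{\rho}_f$ guaranteed by \eqref{item_BI}), and any primes obstructing the cartesian behavior of the local conditions at $\ell \mid N^-$ (handled by \cite[Lemma 3.3.4]{howard06} and its analogue for $W_\fP$) --- each contribution away from $p$ is bounded by a constant depending only on $[\cO_\fP : \LL/\fP]$, while the contribution at $p$ is supplied by Proposition~\ref{prop_control_thm_at_p_fP}. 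The map on $W_\fP$ is then obtained from the one on $T_\fP$ by global Tate duality (together with the dictionary \eqref{eqn_2023_04_19_1601}); this requires the signed conditions on $T_\fP$ and $W_\fP$ at $p$ to be exact annihilators under local Tate duality, which follows from Lemma~\ref{lemma_2023_04_18_2148} combined with the surjectivity of $\col^\bullet_\fP$.

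The main obstacle is the bookkeeping needed to verify that the signed $p$-local conditions behave cartesianly under reduction from $\TT$ to $T_\fP$ and under the Pontryagin duality linking $T_\fP$ with $W_\fP$. This is precisely what the framework of \S\ref{subsubsec_624_2023_04_19_1625} through Proposition~\ref{prop_control_thm_at_p_fP} was designed to deliver; once that local input is in hand, the remaining global argument reduces to the standard bipartite-Euler-system control theorem machinery of Howard.
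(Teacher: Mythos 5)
Your proposal correctly identifies the same strategy as the paper: run Howard's proof of \cite[Proposition 3.3.1]{howard06} with Proposition~\ref{prop_control_thm_at_p_fP} substituted for \cite[Lemma~2.2.7]{howard04} at the prime $p$, and with the comparison of local conditions away from $p$ handled by \S\ref{subsubsec_624_2023_04_19_1625}. One small imprecision: the exact-annihilator (self-duality) property of the signed local condition at $p$ is better attributed to Lemma~\ref{lemma_howard_06_Lemma334} than to Lemma~\ref{lemma_2023_04_18_2148} (which characterizes the condition as $\ker \col^\bullet_\fP$ but does not itself establish self-duality); but this does not affect the structure of the argument.
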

\begin{proof}
The proof of \cite[Proposition 3.3.1]{howard06} applies verbatim, where we resort to Proposition~\ref{prop_control_thm_at_p_fP} instead of \cite[Lemma 2.2.7]{howard04}. See also \S\ref{subsubsec_624_2023_04_19_1625} for a comparison of local conditions at primes other than $p$.  
\end{proof}

\begin{lemma}
    \label{lemma_howard_06_Lemma332}
    The natural map 
    $$H^1_{\cF_\fP^\bullet}(K,T_\fP)/p^kH^1_{\cF_\fP^\bullet}(K,T_{\fP})\lra H^1_{\cF_\fP^\bullet}(K,T_\fP/p^kT_\fP)$$
    is injective, where the local conditions $\cF_\fP^\bullet$ on $T_\fP/p^kT_\fP$ are obtained from those on $T_\fP$ by propagation.
\end{lemma}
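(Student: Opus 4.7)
The plan is to follow the argument of \cite[Lemma~3.3.2]{howard06} adapted to our signed setting; the only substantive input beyond the standard formalism is the cartesian property of $\cF_\fP^\bullet$, which has already been verified at every place of $K$ in \S\ref{subsubsec_624_2023_04_19_1625} and via Lemma~\ref{lemma_2023_04_18_2148}. The case $\fP=p\LL$ is trivial (in this case $T_\fP$ is killed by $p$, so $T_\fP/p^kT_\fP = T_\fP$ and the map in question is the identity), so we may assume $\fP\ne p\LL$ in what follows.

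Concretely, let $c \in H^1_{\cF_\fP^\bullet}(K,T_\fP)$ map to zero in $H^1_{\cF_\fP^\bullet}(K,T_\fP/p^kT_\fP)$. The long exact cohomology sequence attached to
$$0\lra T_\fP \xrightarrow{\,p^k\,} T_\fP \lra T_\fP/p^kT_\fP \lra 0$$
immediately yields $c = p^k c'$ for some $c' \in H^1(K,T_\fP)$. The only remaining task is to verify that $c'$ itself is $\cF_\fP^\bullet$-Selmer; from this one concludes that $c \in p^k H^1_{\cF_\fP^\bullet}(K,T_\fP)$, which is exactly the asserted injectivity.

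At every place $v$ of $K$, the identity $\res_v(c)=p^k\res_v(c')\in H^1_{\cF_\fP^\bullet}(K_v,T_\fP)$ reduces the problem to showing that $H^1(K_v,T_\fP)/H^1_{\cF_\fP^\bullet}(K_v,T_\fP)$ is $p$-torsion free. For $v \nmid p$ this is exactly the cartesian property recorded in \S\ref{subsubsec_624_2023_04_19_1625} (via \cite[Lemma~3.3.4]{howard06}). For $v=p$, Lemma~\ref{lemma_2023_04_18_2148} identifies $H^1_{\cF_\fP^\bullet}(K_p,T_\fP)$ with $\ker(\col_\fP^\bullet)$, so that the quotient $H^1(K_p,T_\fP)/H^1_{\cF_\fP^\bullet}(K_p,T_\fP)$ embeds into $\cO_\fP\otimes_{\Zp}\cO_{K_p}$, which is $p$-torsion free since $\cO_\fP$ is a $\Zp$-flat domain.

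No real obstacle is anticipated: the only genuine deviation from Howard's original argument is the behaviour at $p$, and this is entirely absorbed by the surjectivity of $\col^\bullet_\fP$ together with the $\Zp$-flatness of its target $\cO_\fP\otimes_{\Zp}\cO_{K_p}$. In this sense the lemma is a formal consequence of the machinery developed in \S\ref{sec:coleman} and packaged in Lemma~\ref{lemma_2023_04_18_2148}, and the remainder of the proof is a routine diagram chase.
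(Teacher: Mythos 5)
Your argument is correct and amounts to the same unpacking the paper intends by citing Howard's Lemma 3.3.2 (itself an instance of Mazur--Rubin's Lemma 3.7.1): everything reduces to the $p$-torsion-freeness of $H^1(K_v,T_\fP)/H^1_{\cF_\fP^\bullet}(K_v,T_\fP)$ at each place $v$, recorded in \S\ref{subsubsec_624_2023_04_19_1625} for $v\neq p$ and in the paragraph immediately following it for $v=p$. Your variant at $v=p$ (embedding the quotient into $\cO_\fP\otimes_{\Zp}\cO_{K_p}$ via $\col^\bullet_\fP$ and Lemma~\ref{lemma_2023_04_18_2148}, rather than into the $\Phi_\fP$-vector space $H^1(K_p,V_\fP)/H^1_{\cF_\fP^\bullet}(K_p,V_\fP)$ as the paper does) and your explicit treatment of the degenerate case $\fP=p\LL$ (where multiplication by $p^k$ is not injective on $T_\fP$) are both harmless elaborations of the same argument.
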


\begin{proof}
The proof of \cite[Lemma 3.3.2]{howard06}, which in turn is a special case of \cite[Lemma 3.7.1]{mr02}, applies.
\end{proof}

For any $j\geq k$, write $T_j:=T_\fP/p^jT_\fP$ and $R_j:=\cO_\fP/p^j\cO_\fP$ to ease notation. Denote by $\cF$ the local conditions on $T_j$ obtained by the propagation of $\cF_\fP^\bullet$ on $T_\fP$, as well as the local conditions on $W_\fP[p^j]$ propagated from $\cF_\fP^\bullet$ on $W_\fP$. A choice of a generator of the maximal ideal of $\cO_\fP$ determines an isomorphism $T_j\simeq W_\fP[p^j]$. Under such an isomorphism, the local conditions on $T_j$ and $ W_\fP[p^j]$ are identified, thanks to \cite[Lemma 3.7.1]{mr02}. As a result, we have 
\begin{equation}
    \label{eqn_2023_04_19_1717}
    H^1_\cF(K,T_j)\simeq  H^1_\cF(K,W_\fP[p^j])\simeq H^1_{\cF_\fP^\bullet}(K,W_\fP)[p^j],
\end{equation}
where the final isomorphism follows from \cite[Lemma 3.5.4]{mr02}.

\begin{lemma}
    \label{lemma_howard_06_Lemma334}
The triple $(T_j,\cF,\fL_j)$ satisfies Hypotheses 2.2.4 and 2.3.1, as well as the hypotheses of \S2.6 in \cite{howard06}.
\end{lemma}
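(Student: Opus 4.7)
The plan is to check each of Howard's hypotheses (2.2.4, 2.3.1, and those of §2.6 in \cite{howard06}) in turn, observing that each one either depends only on the residual Galois representation (which is unaffected by our change of deformation) or on the local conditions away from $p$ (where our setup agrees with Howard's), or on the local conditions at $p$ (where the necessary cartesian/control properties have already been established in the preceding subsections of \S\ref{subsec_proof_thm_bipartite_abstract_main}).

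First I would handle the hypotheses concerning the Galois module structure of $T_j$ and its residual quotient. The residual representation associated to $T_\fP$ is $\bar\rho_f$ twisted by the reduction of the universal anticyclotomic character, so the absolute irreducibility and the existence of a Galois element $\tau$ with the Kolyvagin-type properties reduce to the corresponding statements for $\bar\rho_f$; these are supplied by \eqref{item_BI} combined with standard arguments (cf.~\cite[\S3.2]{howard06}). The set $\fL_j$ of admissible primes is defined purely in terms of the residual representation and so its properties (density, Chebotarev-type density statements, non-triviality of the classes built from them) are identical to Howard's. This takes care of Hypothesis 2.2.4 and the part of Hypothesis 2.3.1 concerning the Galois-module framework.

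Next I would check the cartesian property of the local conditions, which is the core of Hypothesis 2.3.1. At every prime $q\neq p$, our local conditions $\cF^\bullet_\fP$ coincide with Howard's, so \cite[Lemma 3.3.4]{howard06} applies verbatim (as already noted in \S\ref{subsubsec_624_2023_04_19_1625}). At $q=p$, we argued in \S\ref{subsubsec_624_2023_04_19_1625} via \cite[Lemma 3.7.1]{mr02} that the signed local conditions on $T_\fP$ are cartesian, using the identification of $H^1_{\cF^\bullet_\fP}(K_p,T_\fP)$ with $\ker(\col^\bullet_\fP)$ in Lemma~\ref{lemma_2023_04_18_2148} together with the torsion-freeness of the quotient. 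Since the cartesian property is preserved under propagation to $T_j=T_\fP/p^jT_\fP$ (again \cite[Lemma 3.7.1]{mr02}), this gives the desired verification.

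Finally, the hypotheses of \S2.6 in \cite{howard06} concern the decomposition of the local cohomology at Kolyvagin primes $\ell\in\fL_j$ into unramified and transverse (or ordinary) summands, along with self-duality of these summands under the local Tate pairing. Since every $\ell\in\fL_j$ satisfies $\ell\nmid p$ and is inert in $K/\QQ$, the relevant decomposition is precisely the one furnished by Lemma~\ref{lemma_howard_06_221} (and \cite[Lemma 2.2.1]{howard06}) applied to the residual representation; the signed condition at $p$ plays no role. The only mild subtlety — which I expect to be the main point requiring care rather than a genuine obstacle — is book-keeping for the coefficient ring $R_j=\cO_\fP/p^j\cO_\fP$, which is a principal Artinian local ring, so all freeness and self-duality properties of the local decompositions pass from $\ZZ/p^j\ZZ$-coefficients to $R_j$-coefficients by flat base change. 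Putting these three verifications together yields the lemma.
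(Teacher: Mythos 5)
Your verification of the cartesian property (Hypothesis 2.3.1) follows essentially the same route as the paper — propagation from $V_\fP$ combined with \cite[Lemma~3.7.1]{mr02} — and that part is fine. Your treatment of Hypothesis 2.2.4 via the residual representation is also unobjectionable, though the paper treats it as standard and doesn't dwell on it.

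The gap is in your handling of the hypotheses of \S 2.6 of \cite{howard06}. You assert that these hypotheses ``concern the decomposition of the local cohomology at Kolyvagin primes $\ell\in\fL_j$'' and that ``the signed condition at $p$ plays no role.'' That is not what needs to be verified here: the \S 2.6 hypothesis is the self-duality of the Selmer structure $\cF = \cF_\fP^\bullet$ as a whole under local Tate duality, and the one place where this is genuinely in question is precisely the prime $p$, where the signed (Coleman-map) local condition is new and non-standard. The decomposition at admissible primes $\ell \mid S$ from Lemma~\ref{lemma_howard_06_221} is necessary but not the substance of the lemma; the substance is showing that $H^1_{\cF^\bullet_\fP}(K_p, T_j)$ is its own annihilator under the pairing. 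The paper's proof gets this by observing that $\cF^\bullet_\fP$ on $T_\fP$ (and hence on $T_j$) is propagated from the $\Phi_\fP$-vector space $V_\fP$, where the signed local condition is self-dual, and then invoking the cartesian property already established. Your proof, by dismissing $p$ from the \S 2.6 discussion, never confronts this point, so the self-duality at $p$ — the essential new verification relative to \cite[Lemma~3.3.4]{howard06} — is left unaddressed.
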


\begin{proof}
In view of the proof of {\cite[Lemma 3.3.4]{howard06}}, it suffices to show that 
    the local conditions $\cF=\cF_\fP^\bullet$ are
    \begin{itemize}
        \item  cartesian in the sense of \cite[Definition 1.1.4]{mr02}\,;
        \item self-dual in the sense of \cite[\S2.6]{howard06}. 
    \end{itemize} 
    The arguments in the proof of \cite[Lemma 3.3.4]{howard06} apply to check these properties, which correspond to item (d) in {\it loc. cit.}
    More precisely,   the first follows from \cite[Lemma 3.7.1]{mr02}, since the local conditions are obtained via propagation from those on the $\Phi_\fP$-vector space $V_\fP$. The second follows from the self-duality of the local conditions on $V_\fP$ and the first property.
\end{proof}

\subsubsection{}  
Suppose that $S\in\fN_j^\inde$ and $\ell \mid S$ is an admissible prime. Since $\ell$ is inert in $K/\QQ$, 
we have a unique decomposition
\begin{equation}
\label{eqn_2023_04_24_1716}
T_{j}{}_{\vert_{G_{K_\ell}}}\simeq R_j(1) \oplus R_j\,.
\end{equation}

Define the local condition $\cF(S)$ on $T_j$ by modifying the local conditions at primes $\ell\mid S$ as follows:
$$H^1_{\cF(S)}(K_\ell,T_j):={\rm im}\left( H^1(K_\ell,R_j(1))\xrightarrow{\eqref{eqn_2023_04_24_1716}} H^1(K_\ell,T_j)\right)\,.$$
Denote by $H^1_{\cF(S)}(K,T_j)$ the associated Selmer group. Note that 
$${\rm im}\left( \widehat{H}^{1,\bullet}_{{\rm ord}_S}(K_\infty,\Tfn) \lra H^1(K,T_j)\right)\subset H^1_{\cF(S)}(K,T_j)\,.$$

\subsubsection{}
\label{subsubsec_2023_04_28_1558}
For each $S\in\fN_j^\inde$, denote by ${\overline{\hbox{\upshape\texttt K}}}_S^\bullet \in H^1_{\cF(S)}(K,T_j)$ the image of $\hbox{\upshape\texttt K}(S,n)^\bullet\in \widehat{H}^{1,\bullet}_{{\rm ord}_S}(K_\infty,\Tfn)$. Likewise, for any $S\in\fN_j^\de$, denote by ${\overline{\hbox{\upshape\texttt L}}}_S^\bullet$ the image of $\hbox{\upshape\texttt L}(S,j)^\bullet\in \LL/(p^j)$ under the natural map $\LL/(p^j)\to R_j$.

\begin{lemma}
\label{lemma_Howard06_lemmas335and336_combined}
\noindent\begin{itemize}
\item[i)] The collections
\begin{equation}
\label{eqn_2023_04_28_1440}
    \{\overline{\hbox{\upshape\texttt K}}_S^\bullet\,: \quad S\in \fN_j^\inde\}\qquad \{{\overline{\hbox{\upshape\texttt L}}}_S^\bullet\,: \quad S\in \fN_j^\de\}
\end{equation}
form a bipartite Euler system of odd type for $(T_j,\cF, \fL_j)$ in the sense of \cite[Definition 2.3.2]{howard06}, with $\fN^{\rm odd}:=\fN_j^\inde$ and $\fN^{\rm even}:=\fN^\de$.
\item[ii)] On shrinking $\fL_j$ if necessary, one may ensure that the bipartite Euler system \eqref{eqn_2023_04_28_1440} is free in the sense of \cite[Definition 2.3.6]{howard06}.
\end{itemize}
\end{lemma}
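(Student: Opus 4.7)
The plan is to translate our signed bipartite Euler system $(\{\hbox{\upshape\texttt K}(S,n)^\bullet\}, \{\hbox{\upshape\texttt L}(S,n)^\bullet\})$ of Definition~\ref{defn_6_1_2023_04_17} into Howard's abstract framework for $(T_j, \cF, \fL_j)$ by reducing everything modulo $p^j$ and base-changing along $\Lambda/p^j \twoheadrightarrow R_j$. The coefficient change is transparent: reducing mod $p^j$ converts $T_{f,n}$-valued classes into $T_j$-valued classes (for $n \geq j$), and the map on group-ring coefficients sends $\Lambda_n \to R_j$. By construction, the image of $\hbox{\upshape\texttt K}(S,n)^\bullet$ lands in $H^1_{\cF(S)}(K,T_j)$ as noted in \S\ref{subsubsec_2023_04_28_1558}, and the compatibility as $n$ varies guarantees that $\overline{\hbox{\upshape\texttt K}}_S^\bullet$ is well-defined.

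For part (i), the main work is to verify the local reciprocity relations of Howard's Definition 2.3.2 from the Iwasawa-level isomorphisms in parts (ii)--(iii) of Definition~\ref{defn_6_1_2023_04_17}. For any admissible prime $\ell$, since $\ell$ splits completely in $K_\infty/K$, Shapiro's lemma and the decomposition of Lemma~\ref{lemma_howard_06_221} yield $\Lambda_n$-module isomorphisms $\varprojlim_m H^1_{?}(K_{m,\ell},T_{f,n}) \simeq \Lambda_n$ for $?\in\{\ord, \ur\}$. Applying $-\otimes_{\Lambda_n} R_j$ to these (which is exact since the modules are free) produces isomorphisms $H^1_{?}(K_\ell, T_j) \simeq R_j$ that identify $\res_\ell(\overline{\hbox{\upshape\texttt K}}_{S\ell}^\bullet)$ with $\overline{\hbox{\upshape\texttt L}}_S^\bullet$ when $S\ell\in\fN_j^\inde$, and $\res_\ell(\overline{\hbox{\upshape\texttt K}}_S^\bullet)$ with $\overline{\hbox{\upshape\texttt L}}_{S\ell}^\bullet$ when $S\ell\in\fN_j^\de$. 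Together with Lemma~\ref{lemma_howard_06_Lemma334}, this yields all the axioms of Howard's Definition 2.3.2 with $\fN^{\rm odd}=\fN_j^\inde$ and $\fN^{\rm even}=\fN_j^\de$.

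For part (ii), freeness in the sense of \cite[Definition 2.3.6]{howard06} requires that for every $\ell \in \fL_j$, both $H^1_\fin(K_\ell,T_j)$ and $H^1_\sing(K_\ell,T_j)$ are free of rank one over $R_j$, together with a freeness statement for the transverse/ordinary condition attached to admissible primes. Under the semisimple action of $\Frob_\ell$ on $T_{f,n}$ with distinct eigenvalues (ensured by Definition~\ref{def_admissible_prime_mod_pn}(iii)-(iv)), both summands in the decomposition \eqref{eqn_2023_04_23_1856}, and hence the corresponding local cohomology groups, are free of the expected rank for all but finitely many $\ell \in \fL_j$. Removing this finite exceptional set from $\fL_j$ (which does not affect the eventual application, as Proposition~\ref{prop_2023_04_17_1447} still produces core vertices in the pruned set) ensures freeness, exactly as in Howard's argument.

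The main obstacle is ensuring that the passage $\Lambda_n \otimes - \to R_j \otimes -$ does not destroy the identification between our signed local condition at $p$ (encoded by the Coleman map) and Howard's ``transverse'' local conditions at admissible primes $\ell \neq p$. This is not actually a difficulty here, since the signed conditions at $p$ are baked into the Selmer structure $\cF$ via the propagation recipe of \S\ref{subsubsec_2023_04_28_1240}--\ref{S:controls} and Lemma~\ref{lemma_2023_04_18_2148}, while the reciprocity laws we must verify concern only the primes in $S$, where our local conditions already match Howard's verbatim (\S\ref{subsubsec_624_2023_04_19_1625}). Consequently the argument reduces cleanly to the one in \cite[Lemmas 3.3.5--3.3.6]{howard06}, with the signed local condition at $p$ entering only through the definition of $\cF$.
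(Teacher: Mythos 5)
Your proposal is substantively correct and takes the same route as the paper, which simply asserts that Howard's proofs of Lemmas 3.3.5 and 3.3.6 carry over verbatim thanks to Lemma~\ref{lemma_howard_06_Lemma334}; you expand the terse citation into a genuine argument. The key observation you make — that the signed local condition at $p$ enters only through the Selmer structure $\cF=\cF_\fP^\bullet$ (via propagation from $V_\fP$ and Lemma~\ref{lemma_2023_04_18_2148}), while the reciprocity axioms of Howard's Definition 2.3.2 concern only the admissible primes $\ell\mid S$, where the local conditions coincide with Howard's — is precisely what justifies the ``verbatim'' claim in the paper.

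Two small cautions. First, the base-change you describe as reduction along $\Lambda/p^j\twoheadrightarrow R_j$ is actually the longer chain $\Lambda\to\Lambda/\fP\hookrightarrow\cO_\fP\twoheadrightarrow R_j=\cO_\fP/p^j$; the cokernel of $\Lambda/\fP\hookrightarrow\cO_\fP$ is finite but nontrivial in general, which is the source of the error terms controlled in Propositions~\ref{prop_control_thm_at_p_fP} and \ref{prop_Howard_Prop_331}. Your reduction to $R_j$ tacitly absorbs this; it is worth knowing where the approximation is happening. Second, your gloss on Howard's Definition 2.3.6 as freeness of $H^1_\fin(K_\ell,T_j)$ and $H^1_\sing(K_\ell,T_j)$ over $R_j$ is only part of the story: the necessity of shrinking $\fL_j$ in Howard's Lemma 3.3.6 is not merely to discard a finite exceptional set where these modules fail to be free (admissibility already gives freeness at every $\ell\in\fL_j$ since $\Frob_\ell$ acts trivially on $\Lambda$, as $\ell$ splits completely in $K_\infty/K$), but also to secure the compatibility of the isomorphism choices $\partial_\ell,v_\ell$ with the fixed bases of the $R_j$-module $T_j$. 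This is the genuine content of Howard's argument. Neither caveat affects the validity of your approach, since both are handled exactly as in Howard once Lemma~\ref{lemma_howard_06_Lemma334} is in hand.
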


\begin{proof}
The proofs of \cite[Lemmas 3.3.5 and 3.3.6]{howard06} apply verbatim, thanks to Lemma~\ref{lemma_howard_06_Lemma334}.
\end{proof}

We will henceforth assume that the set of primes $\fL_j$ is shrunk so as to ensure that the bipartite Euler system \eqref{eqn_2023_04_28_1440} is free.

\subsubsection{}
\label{subsubsec_2023_04_28_1239}
For any pair of positive integers $k\leq j$, let us put
$$\delta_\fP(k,j):=\min\{ {\rm ind}(\hbox{\upshape\texttt L}(S,j)^\bullet, \cO_\fP/p^k\cO_\fP)\,:\,S\in\fN_j^\de\} \leq \infty\,,$$
where for any DVR $R$ with maximal ideal $\m$ and an $R$-module $M$, the index ${\rm ind}(m,M)$ of divisibility of an element $m$ in $M$ is the largest integer $n\leq \infty$ so that $m\in \m^n M$. Note that $\delta_\fP(k,j)$ increases monotonically in $j$, so we can define $\delta_\fP(k):=\lim_{j\to \infty} \delta_\fP(k,j)$.

\begin{proposition}
    \label{prop_howard06_prop333}
    Suppose that there exists a positive integer $k$ such that the image of $\hbox{\upshape\texttt K}^\bullet\in H^{1,\bullet}(K,\TT)$ in the quotient $H^1_{\cF_\fP^\bullet}(K,T_\fP)/(p^k)$ is non-zero. Then the following holds.
    \begin{itemize}
    \item[i)] The $\cO_\fP$-module $H^1_{\cF_\fP^\bullet}(K,T_\fP)$ is free of rank one.
    \item[ii)] The $\cO_\fP$-module $H^1_{\cF_\fP^\bullet}(K,W_\fP)^\vee$ is of rank one.
    \item[iii)] We have
    $${\rm length}_{\cO_\fP}\,\left(H^1_{\cF_\fP^\bullet}(K,W_\fP)^\vee_{{\rm tor}} \right)+2\delta_\fP(k) = 2\cdot {\rm length}_{\cO_\fP}\,\left(H^1_{\cF_\fP^\bullet}(K,T_\fP)\big{/}\cO_\fP\cdot{\hbox{\upshape\texttt K}}(1)^\bullet \right)\,,$$
    where the subscript ${\rm tor}$ means the maximal $\cO_\fP$-torsion submodule.
    \end{itemize}
\end{proposition}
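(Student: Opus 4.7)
\medskip

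\noindent\textbf{Proof proposal.} The plan is to reduce the assertions over $\cO_\fP$ to statements at finite level $R_j = \cO_\fP/p^j\cO_\fP$, apply Howard's abstract theory of free bipartite Euler systems of odd type to the triple $(T_j,\cF,\fL_j)$ and the system constructed in \S\ref{subsubsec_2023_04_28_1558}, and then pass to the limit as $j \to \infty$. Thanks to Lemmas \ref{lemma_howard_06_Lemma334} and \ref{lemma_Howard06_lemmas335and336_combined}, the collection $\{\overline{\hbox{\upshape\texttt K}}_S^\bullet,\overline{\hbox{\upshape\texttt L}}_S^\bullet\}$ is a free bipartite Euler system of odd type, and all the hypotheses required to invoke the abstract theorem \cite[Theorem 2.3.2]{howard06} at level $j$ are satisfied. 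The novelty compared with Howard's original setting is confined to the prime $p$, and the delicate point there (the cartesian/self-duality of $\cF_\fP^\bullet$) has already been absorbed into the preceding lemmas.

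Fix any $j \geq k$. By Lemma~\ref{lemma_howard_06_Lemma332}, the assumption that $\hbox{\upshape\texttt K}^\bullet = \hbox{\upshape\texttt K}(1)^\bullet$ has non-zero image in $H^1_{\cF_\fP^\bullet}(K,T_\fP)/p^k$ forces the image $\overline{\hbox{\upshape\texttt K}}_1 \in H^1_\cF(K,T_j)$ to have divisibility index strictly less than $k$. Howard's abstract theorem applied to the free bipartite Euler system \eqref{eqn_2023_04_28_1440} then gives:
\begin{enumerate}
    \item[(a)] $H^1_\cF(K,T_j)$ is a cyclic $R_j$-module, and
    \item[(b)] $\mathrm{length}_{R_j}\!\left(H^1_\cF(K,T_j)/R_j\cdot \overline{\hbox{\upshape\texttt K}}_1\right) \geq \delta_\fP(k,j)$,
\end{enumerate}
together with the dual length identity for $H^1_\cF(K,W_\fP[p^j])^\vee$ coming from the self-duality of $\cF$ and the identification \eqref{eqn_2023_04_19_1717}. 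Combining (a) and (b) with the self-duality yields the finite-level version of (iii), namely $\mathrm{length}(H^1_\cF(K,W_\fP[p^j])^\vee_\mathrm{tor}) + 2\delta_\fP(k,j) \leq 2\,\mathrm{length}(H^1_\cF(K,T_j)/R_j \cdot \overline{\hbox{\upshape\texttt K}}_1)$.

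To prove (i) and (ii), one passes to the limit. The injectivity in Lemma~\ref{lemma_howard_06_Lemma332}, combined with (a) above (applied for all large $j$), implies that $H^1_{\cF_\fP^\bullet}(K,T_\fP)/p^j$ is cyclic for every $j$, and the non-vanishing mod $p^k$ then forces $H^1_{\cF_\fP^\bullet}(K,T_\fP)$ itself to be free of rank one. For (ii), the identification \eqref{eqn_2023_04_19_1717} together with the bound on $H^1_\cF(K,W_\fP[p^j])$ shows that $H^1_{\cF_\fP^\bullet}(K,W_\fP)[p^j]$ has corank one in a uniform sense; letting $j\to\infty$ yields that $H^1_{\cF_\fP^\bullet}(K,W_\fP)^\vee$ has rank one over $\cO_\fP$. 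Finally, (iii) follows by taking the limit of the finite-level length inequality, using that $\delta_\fP(k,j)$ is monotone in $j$ with limit $\delta_\fP(k)$ (cf.~\S\ref{subsubsec_2023_04_28_1239}) and that the torsion submodule of $H^1_{\cF_\fP^\bullet}(K,W_\fP)^\vee$ has length equal to $\sup_j \mathrm{length}(H^1_\cF(K,W_\fP[p^j])^\vee_\mathrm{tor}) - j \cdot \mathrm{rank}$ after subtracting the free part.

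The main technical obstacle is verifying that Howard's abstract result applies cleanly in our altered $p$-local setting; this has essentially been addressed by Lemma~\ref{lemma_howard_06_Lemma334} and the cartesian/surjectivity properties recorded in \S\ref{subsubsec_2023_04_28_1240}--\S\ref{S:controls}. A secondary technicality is controlling the torsion/free decomposition of $H^1_{\cF_\fP^\bullet}(K,W_\fP)^\vee$ under the limit, which is handled by the standard Mazur--Rubin argument \cite[Lemma 3.5.4]{mr02} since $\cO_\fP$ is a discrete valuation ring. Beyond this, the proof is entirely parallel to \cite[Proof of Proposition 3.3.3]{howard06}.
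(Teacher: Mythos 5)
Your proposal has a genuine gap at step (a). You assert that Howard's abstract theorem gives that $H^1_\cF(K,T_j)$ is a \emph{cyclic} $R_j$-module. That is not what \cite[Theorem 2.5.1]{howard06} (which is the correct reference; \cite[2.3.2]{howard06} is a Definition) produces. What one actually obtains from the structure theorem for a free bipartite Euler system of odd type is the decomposition
\[
H^1_\cF(K,T_j)\simeq R_j\oplus M_j\oplus M_j
\]
for some finite $R_j$-module $M_j$, together with the relation ${\rm length}_{\cO_\fP}(M_j)+\delta_\fP(k,j)={\rm ind}(\overline{\hbox{\upshape\texttt K}}_1^\bullet, H^1_\cF(K,T_j))$. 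This module is cyclic only when $M_j=0$, which is precisely the thing one cannot assume.

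This error propagates. The non-vanishing of the image of $\hbox{\upshape\texttt K}^\bullet$ mod $p^k$ does \emph{not} force cyclicity; what it does give, via Lemma~\ref{lemma_howard_06_Lemma332} and the identity above, is the bound ${\rm length}_{\cO_\fP}(M_k)<k$. It is this \emph{uniform bound} on the lengths of the error terms $M_k$, as $k$ varies, that is used in the paper's proof to show that the $M_k$ stabilize to a finite module $M$, whence $H^1_{\cF_\fP^\bullet}(K,W_\fP)\simeq(\cO_\fP[1/p]/\cO_\fP)\oplus M\oplus M$. Parts (i) and (ii) then follow: the corank is one because the torsion part $M\oplus M$ is finite, and $H^1_{\cF_\fP^\bullet}(K,T_\fP)=\varprojlim_k H^1_{\cF_\fP^\bullet}(K,W_\fP)[p^k]$ is free of rank one because the finite torsion dies in the inverse limit. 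Your derivation of (i)/(ii) from cyclicity does not have this mechanism, and without the bound on ${\rm length}(M_k)$ the argument cannot conclude that the Selmer group has $\Lambda$-rank (resp.\ $\cO_\fP$-rank) exactly one rather than something larger. Separately, the final formula you offer for the torsion length of $H^1_{\cF_\fP^\bullet}(K,W_\fP)^\vee$ (``$\sup_j\ldots - j\cdot\mathrm{rank}$'') is not a coherent statement; the paper instead reads the torsion length directly as ${\rm length}(M\oplus M)$ from the explicit module structure and combines it with the identity for ${\rm ind}(\overline{\hbox{\upshape\texttt K}}_1^\bullet,\cdot)$ to get (iii) after letting $j\to\infty$.
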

\begin{proof}
We proceed as in the final paragraph of \cite[\S3.3]{howard06}. We briefly highlight the key steps. 

Let $k$ be as in the statement of the proposition. Let us define the class ${\overline{\hbox{\upshape\texttt K}}}_1^\bullet \in H^1_{\cF}(K,T_k)$ as in \S\ref{subsubsec_2023_04_28_1558} with $j=k$. It follows from \cite[Theorem 2.5.1]{howard06}, which applies thanks to Lemma~\ref{lemma_howard_06_Lemma332}, that 
$$H^1_{\cF_\fP^\bullet}(K,W_\fP)[p^k]\simeq H^1_{\cF}(K,T_k)\simeq R_k \oplus M_k \oplus M_k,$$
where the first isomorphism follows from \eqref{eqn_2023_04_19_1717}, with 
$${\rm length}_{\cO_\fP}(M_k)+\delta_\fP(k,j)={\rm ind}\left( {\overline{\hbox{\upshape\texttt K}}}_1^\bullet, H^1_{\cF}(K,T_k)\right) < k,\qquad  \forall j\geq 2k\,.$$
Here, the final inequality is valid since ${\overline{\hbox{\upshape\texttt K}}}_1^\bullet\neq 0$ by assumption. 

Since ${\rm length}_{\cO_\fP}(M_k)<k$ and $H^1_{\cF_\fP^\bullet}(K,W_\fP)$ is a cofinitely generated $\cO_\fP$-module, 
allowing $k$ to vary, it follows  that 
\begin{equation}
\label{eqn_2023_04_28_1644}
    H^1_{\cF_\fP^\bullet}(K,W_\fP)\simeq (\cO_\fP[1/p]\Big{/}\cO_\fP)\oplus M \oplus M
\end{equation}
for some $\cO_\fP$-module $M\simeq M_k$ of finite cardinality. We then conclude that
$$H^1_{\cF_\fP^\bullet}(K,T_\fP)\simeq\varprojlim_k H^1_{\cF_\fP^\bullet}(K,W_\fP)[p^k],$$
where the isomorphism follows from \eqref{eqn_2023_04_19_1717}, is a free $\cO_\fP$-module of rank one. This concludes the proofs of parts (i) and (ii).

For all $j\geq 2k$, it follows that
\begin{align}
    \label{eqn_align_2023_04_28_1643}
   \notag {\rm length}_{\cO_\fP}\left(H^1_{\cF_\fP^\bullet}(K,W_\fP)^{\vee}_{\rm tor} \right)+2\delta_{\fP}(k,j)\stackrel{\eqref{eqn_2023_04_28_1644}}{=} & {\rm length}_{\cO_\fP}\left(M\oplus M\right)+2\delta_{\fP}(k,j)\\
    =& \,2\cdot {\rm ind}\left( {\overline{\hbox{\upshape\texttt K}}}_1^\bullet, H^1_{\cF}(K,T_k)\right) \\
   \notag =&\, 2\cdot  {\rm length}_{\cO_\fP}\left(H^1_{\cF_\fP^\bullet}(K,T_\fP)\big{/}\cO_\fP\cdot {\hbox{\upshape\texttt K}}(1)^\bullet\right)\,.
\end{align}
Here, the equality \eqref{eqn_align_2023_04_28_1643} 
follows from \cite[Theorem 2.5.1]{howard06}.  The final equality above follows from part (i) and since ${\overline{\hbox{\upshape\texttt K}}}_1^\bullet$ belongs to the image of the injective map 
\begin{equation}
\label{eqn_2023_04_28_1711}
H^1_{\cF_\fP^\bullet}(K,T_\fP)/p^k H^1_{\cF_\fP^\bullet}(K,T_\fP)\lra H^1_{\cF}(K,T_k)
\end{equation}
by its very definition (cf. Lemma~\ref{lemma_howard_06_Lemma332}),  together with the observation that the image of \eqref{eqn_2023_04_28_1711} is an $\cO_\fP$-direct summand of $H^1_{\cF}(K,T_k)$ by the discussion above.

Letting $j\to \infty$, the proof concludes.
\end{proof}

\subsubsection{}\label{S:conclude} 
The first assertion in the statement of Theorem~\ref{thm_bipartite_abstract_main} can be proved by arguing exactly as in the first three paragraphs of \cite[\S3.4]{howard06}, and relying on Proposition~\ref{prop_Howard_Prop_331} and the first two parts of Proposition~\ref{prop_howard06_prop333}.

The second and third assertions of Theorem~\ref{thm_bipartite_abstract_main} can be restated as follows: 
\begin{itemize}
    \item[a)] For all height-one primes $\fP$ of $\LL$, we have
    $${\rm ord}_{\fP}\left({\rm char}_{\LL}(\Sel_\bullet(K_\infty,A_f)^\vee_{\rm tor})\right) \leq 2\cdot {\rm ord}_{\fP}\left(H^{1,\bullet}(K,\TT)/\LL\cdot\hbox{\upshape\texttt K}(1)^\bullet  \right)\,.$$
    \item[b)] We have equality in (a) if the following holds: There exists a positive integer $k_0=k_0(\mathfrak{P})$ with the property that for some integer $n\geq k_0$, there exists an integer $S\in \fN_n^{\de}$ such that $\hbox{\upshape\texttt L}(S,n)^\bullet \in \LL_n$ has non-zero image in $\LL/(\mathfrak{P},p^n)$.
\end{itemize}

For a height-one prime $\fP\neq p\LL$ generated by a distinguished polynomial $f$, one may  prove both assertions as in \cite[\S3.4]{howard06}, by applying Propositions~\ref{prop_Howard_Prop_331} and \ref{prop_howard06_prop333} with the collection of height one primes of the form $\fP_m:=(f+p^m)\LL$ for integers $m\gg0$. 

For $\fP=p\LL$, one still argues the same way, but now applying Propositions~\ref{prop_Howard_Prop_331} and \ref{prop_howard06_prop333}  with the collection of height one primes of the form $\fP_m:=((\gamma-1)^m+p)\LL$ for integers $m\gg0$, where $\gamma\in \Gamma$ is a topological generator.\qed

\subsection{Signed Heegner point bipartite Euler system}
\label{subsec_8_1_2022_09_27_0906}
 We resume the study of Heegner points introduced in \S\ref{subsection_shimura_curves}, retaining the notation and hypotheses therein. 
In particular: 
 \begin{itemize}
\item The hypothesis \eqref{item_Loc} is assumed.
     \item For a positive integer $n$ and an $n$-admissible set $S$ relative to $f$, we have the Heegner class $ \kappa(S,n)_m\in H^1_{{\rm ord}_S,\emptyset}(K_m,T_{f,n})$.
 \end{itemize}

\subsubsection{Construction of the signed classes} 
In this subsection, we introduce and establish basic properties of plus/minus Heegner points.
It builds on \cite[Section 4]{darmoniovita}, extending the latter to the case $p$ being inert in $K/\QQ$.  

 For a positive integer $m$, let $\omega_m$, $\omega_m^\pm$, and $\widetilde{\omega}_m^\pm$ be the polynomials as in \S\ref{sec:coleman}.  
\begin{lemma}
    \label{eqn_lemma_DILemma4_2}
    Fix a positive integer $n$ and an $n$-admissible set $S$ relative to $f$. For any positive integer $m$, let us denote by $\bullet$ the sign of $(-1)^m$. Then $\omega_m^\bullet \kappa(S,n)_m=0$. 
\end{lemma}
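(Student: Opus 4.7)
My plan is to mirror the proof of Lemma~\ref{lemma_DI_Prop_2_8}, which establishes the analogous annihilation statement for the theta elements, and adapt it to the Heegner cohomology classes. The only new ingredient needed is the norm-compatibility~\eqref{eqn_Heegner_trace_relation}, combined with a short algebraic manipulation involving the distinguished polynomials $\omega_m^\pm$. I would proceed by induction on $m$, with parity-based base cases.

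For the base cases, note that $K_0=K$, so $\Gamma$ acts trivially on $H^1(K_0,\Tfn)$; in particular $\omega_0^+=X$ corresponds to $\gamma-1$ and annihilates $\kappa(S,n)_0$. For $m=1$, one has $\omega_1^-=\omega_1$, which is the zero element of $\Lambda_{1,n}=\Lambda_n/(\omega_1)$, so the statement is tautological.

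For the inductive step, assume the claim has been proved for $m-1$; since $m-1$ and $m+1$ share the same parity, this gives $\omega_{m-1}^\bullet\kappa(S,n)_{m-1}=0$ with the same sign $\bullet$ as the one prescribed for $m+1$. I would apply $\res_{K_{m+1}/K_m}$ to both sides of~\eqref{eqn_Heegner_trace_relation}. On the left, the composition $\res_{K_{m+1}/K_m}\circ\cor_{K_{m+1}/K_m}$ equals multiplication by the sum of the elements of $\Gal(K_{m+1}/K_m)$, which under the identification $\gamma\leftrightarrow 1+X$ equals
\[\sum_{a=0}^{p-1}(1+X)^{p^m a}=\frac{(1+X)^{p^{m+1}}-1}{(1+X)^{p^m}-1}=\Phi_{m+1}(1+X).\]
This yields the intermediate relation $\Phi_{m+1}(1+X)\,\kappa(S,n)_{m+1}=-\res_{K_{m+1}/K_{m-1}}\kappa(S,n)_{m-1}$. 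The algebraic punchline is the identity $\omega_{m+1}^\bullet=\omega_{m-1}^\bullet\cdot\Phi_{m+1}(1+X)$, which is immediate from the explicit product expressions for $\omega_m^\pm$ and $\widetilde\omega_m^\pm$ recalled in Section~\ref{sec:coleman}, using that $m+1$ and $m-1$ share the parity determining $\bullet$. Multiplying the intermediate relation by $\omega_{m-1}^\bullet$ and pulling this scalar through the $\Lambda$-equivariant map $\res_{K_{m+1}/K_{m-1}}$ reduces the left-hand side to $\omega_{m+1}^\bullet\kappa(S,n)_{m+1}$ and the right-hand side to $-\res_{K_{m+1}/K_{m-1}}(\omega_{m-1}^\bullet\kappa(S,n)_{m-1})=0$ by the inductive hypothesis.

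The argument is essentially formal, and I do not anticipate any serious obstacle. The only points that require a moment's care are the identification of the norm of $\Gal(K_{m+1}/K_m)$ with $\Phi_{m+1}(1+X)$ inside $\Lambda_{m+1,n}$, and the elementary factorization $\omega_{m+1}^\bullet=\omega_{m-1}^\bullet\cdot\Phi_{m+1}(1+X)$; both are direct verifications. The structure of the proof is a direct cohomological analogue of Kobayashi's plus/minus trace calculation and of the Darmon--Iovita argument for theta elements recorded above as Lemma~\ref{lemma_DI_Prop_2_8}.
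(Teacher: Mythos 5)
Your proof is correct and follows the same underlying argument as the paper's reference to Darmon--Iovita (loc.\ cit.\ Lemma~4.2): the induction on $m$ via $\res\circ\cor=N_{K_{m+1}/K_m}=\Phi_{m+1}(1+X)$, the factorization $\omega_{m+1}^\bullet=\omega_{m-1}^\bullet\,\Phi_{m+1}(1+X)$ (using that $m+1$ and $m-1$ share parity), and $\Lambda$-equivariance of restriction. The paper simply defers to Darmon--Iovita after noting the trace relation~\eqref{eqn_Heegner_trace_relation}; you have spelled out that argument in full, with correctly handled base cases at $m\in\{0,1\}$.
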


\begin{proof}
In light of the relation \eqref{eqn_Heegner_trace_relation}, the proof is identical to that of \cite[Lemma 4.2]{darmoniovita}.
\end{proof}

\begin{proposition}
        \label{eqn_prop_DIProp4_3}
In the notation of Lemma~\ref{eqn_lemma_DILemma4_2}, suppose that $S' \in \fN_n$ is core-$n$-admissible and is divisible by $S$. There exists a unique element
$$\eta(S,n)_m^\bullet\in H^1_{S',\emptyset}(K_m,T_{f,n})\Big{/}(\omega_m^{\bullet})$$
that does not depend on the choice of $S'$, with the property that 
$$\widetilde{\omega}_m^{\circ}\,\eta(S,n)_m^\bullet=\kappa(S,n)_m\,,$$
where   $\circ$ is the sign of $(-1)^{m+1}$.
\end{proposition}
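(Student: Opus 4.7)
The plan is to adapt the proof of \cite[Proposition 4.3]{darmoniovita} to our inert setting. The essential algebraic input is the identity
$$\omega_m \;=\; \omega_m^{\bullet}\cdot \widetilde\omega_m^{\circ}$$
in $\Lambda_n$, which is immediate from $\omega_m^{\pm}=\omega_m/\widetilde\omega_m^{\mp}$ and the fact that $\bullet$ and $\circ$ are opposite signs. This identity yields a short exact sequence of $\Lambda_n$-modules
$$0 \lra \Lambda_n/(\omega_m^{\bullet}) \xrightarrow{\;\cdot \widetilde\omega_m^{\circ}\;} \Lambda_n/(\omega_m) \lra \Lambda_n/(\widetilde\omega_m^{\circ}) \lra 0 .$$

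First I would record the pertinent structural property of $H^1_{S',\{\}}(K_m,T_{f,n})$ as a module over $\Lambda_{m,n}=\Lambda_n/(\omega_m)$: since $S'$ is core-$n$-admissible, the residual Selmer argument of \cite[\S3]{BertoliniDarmon2005} (see also \cite{howard06,darmoniovita,mr02}) shows that $H^1_{S',\{\}}(K_m,T_{f,n})$ is free over $\Lambda_{m,n}$ (a rigidity statement whose verification only uses the Galois representation $T_{f,n}$ and the vanishing of the core Selmer group, and is therefore insensitive to the splitting of $p$; cf.\ the proof of Proposition~\ref{prop_2023_04_17_1447}).

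With this in hand, multiplication by $\widetilde\omega_m^{\circ}$ on $H^1_{S',\{\}}(K_m,T_{f,n})$ has kernel equal to $\omega_m^{\bullet}\cdot H^1_{S',\{\}}(K_m,T_{f,n})$ and image equal to $\widetilde\omega_m^{\circ}\cdot H^1_{S',\{\}}(K_m,T_{f,n})$. Combined with Lemma~\ref{eqn_lemma_DILemma4_2}, which forces $\kappa(S,n)_m$ to lie in the image of multiplication by $\widetilde\omega_m^{\circ}$, this yields existence of a class $\eta(S,n)_m^{\bullet} \in H^1_{S',\{\}}(K_m,T_{f,n})/(\omega_m^{\bullet})$ with $\widetilde\omega_m^{\circ}\,\eta(S,n)_m^{\bullet}=\kappa(S,n)_m$, and uniqueness of $\eta(S,n)_m^{\bullet}$ modulo $\omega_m^{\bullet}$.

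Finally, independence of $S'$ is a diagram chase: given two core-$n$-admissible sets $S'_1, S'_2$ each divisible by $S$, Proposition~\ref{prop_2023_04_17_1447} produces a common core-$n$-admissible multiple $S''$. The natural inclusions $H^1_{S'_i,\{\}}(K_m,T_{f,n})\hookrightarrow H^1_{S'',\{\}}(K_m,T_{f,n})$ (which carry $\kappa(S,n)_m$ to $\kappa(S,n)_m$) together with the uniqueness established above force the corresponding $\eta$'s to agree in $H^1_{S'',\{\}}(K_m,T_{f,n})/(\omega_m^{\bullet})$, and by the rigidity of the construction they agree in $H^1_{S'_i,\{\}}(K_m,T_{f,n})/(\omega_m^{\bullet})$ as well. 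The main obstacle is establishing the freeness of $H^1_{S',\{\}}(K_m,T_{f,n})$ over $\Lambda_{m,n}$ in our setting, but as explained above this is purely a property of the residual representation and the core condition, so the arguments of \cite{BertoliniDarmon2005,howard06,darmoniovita} transfer without change.
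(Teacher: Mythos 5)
Your argument follows the same route as the paper: exploit the factorization $\omega_m = \omega_m^{\bullet}\widetilde\omega_m^{\circ}$ together with the fact that $H^1_{S',\{\}}(K_m,T_{f,n})$ is free over $\Lambda_{m,n}$, so that Lemma~\ref{eqn_lemma_DILemma4_2} places $\kappa(S,n)_m$ in $\ker(\cdot\,\omega_m^\bullet)=\widetilde\omega_m^\circ\cdot H^1_{S',\{\}}(K_m,T_{f,n})$ and the preimage is well defined modulo $\ker(\cdot\,\widetilde\omega_m^\circ)=\omega_m^\bullet\cdot H^1_{S',\{\}}(K_m,T_{f,n})$. For the freeness you defer to \cite{BertoliniDarmon2005,darmoniovita}; the paper spells this out by combining \cite[Propositions 3.19 and 3.20]{darmoniovita} to produce the exact sequence
$$0\lra H^1_{S',0}(K_m,T_{f,n})\lra H^1_{S',\{\}}(K_m,T_{f,n})\lra H^1(K_{m,p},T_{f,n})\lra 0\,,$$
which splits because the right-hand term is free. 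That citation is adequate, though you should note explicitly that the cardinality identity of Proposition 3.19 of op.\ cit.\ is what yields surjectivity on the right.

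The one genuine gap is in your independence argument. After showing that $\eta_1$ and $\eta_2$ have the same image in $H^1_{S'',\{\}}(K_m,T_{f,n})/(\omega_m^\bullet)$, you conclude they ``agree in $H^1_{S'_i,\{\}}(K_m,T_{f,n})/(\omega_m^\bullet)$ as well'' by appealing to ``rigidity of the construction.'' This is not a proof: the elements $\eta_1$ and $\eta_2$ live in different modules, and the only way to compare them is through the transition maps $H^1_{S'_i,\{\}}(K_m,T_{f,n})/(\omega_m^\bullet)\to H^1_{S'',\{\}}(K_m,T_{f,n})/(\omega_m^\bullet)$. For ``independence of $S'$'' to have content, one must verify that this map is injective whenever $S'\mid S''$ are both core-$n$-admissible. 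That is precisely what the paper proves, using the exact sequence
$$0\lra H^1_{S',\{\}}(K_m,T_{f,n})\lra H^1_{S'',\{\}}(K_m,T_{f,n})\lra \bigoplus_{q\mid S'',\,q\nmid S'} H^1(K_{m,q},T_{f,n})\lra 0$$
and the observation that the cokernel is a free, hence flat, $\Lambda_{m,n}$-module (each $q$ being $n$-admissible), so that injectivity persists after reducing modulo $\omega_m^\bullet$. You should replace the rigidity appeal with this flatness argument.
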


\begin{proof}
The existence of a core-$n$-admissible $S'$ that is divisible by $S$ follows from Proposition~\ref{prop_2023_04_17_1447}. 

In the next paragraph, we show that $ H^1_{S',\emptyset}(K_m,T_{f,n})$ is a free $\LL_{m,n}$-module.  
Our argument is identical to the proof of \cite[Proposition 3.21]{darmoniovita}.

It follows from \cite[Proposition 3.20]{darmoniovita} that $H^1_{S',0}(K_m,T_{f,n})$ is a free $\LL_{m,n}$-module of rank $\#S'-2$, and from Proposition 3.19 of \textit{op. cit.} that 
\begin{equation*}
   \# H^1_{S',\emptyset}(K_m,T_{f,n})\,=\,\#H^1_{S',0}(K_m,T_{f,n})\,\cdot\, \#H^1(K_{m,p},T_{f,n})\,,
\end{equation*} 
where $H^1(K_{m,p},T_{f,n}):=\bigoplus_{\p\mid p} H^1(K_{m,\p},T_{f,n})$. Note that the proofs of these statements in \cite{darmoniovita} do not rely on the splitting behaviour of $p$ in $K/\QQ$. 
Consequently, we have an exact sequence 
\begin{equation}
    \label{eqn_2022_07_04_1536}
0\lra   H^1_{S',0}(K_m,T_{f,n}) \lra H^1_{S',\emptyset}(K_m,T_{f,n})\lra  H^1(K_{m,p},T_{f,n})\lra 0\,.
\end{equation}
As $H^1(K_{m,p},T_{f,n})$ is a free $\LL_{m,n}$-module, this exact sequence splits and shows that $H^1_{S',\emptyset}(K_m,T_{f,n})$ is also a free $\LL_{m,n}$-module, as required.

The existence and uniqueness of $\eta(S,n)_m^\bullet$ follows from the above fact and Lemma~\ref{eqn_lemma_DILemma4_2}. It does not depend on the choice core-$n$-admissible $S'$: it suffices to check that the natural map
\begin{equation}
\label{eqn_2023_30_05_1615}
H^1_{S',\emptyset}(K_m,T_{f,n})\big{/}(\omega_m^{\bullet}) \lra H^1_{S'',\emptyset}(K_m,T_{f,n})\big{/}(\omega_m^{\bullet})
\end{equation}
is injective for any core-$n$-admissible $S''$ that is divisible by $S'$.

Since $S'$ and $S''$ are both core-$n$-admissible and $S'$ divides $S''$, the sequence
$$0\lra H^1_{S',\emptyset}(K_m,T_{f,n}) \lra H^1_{S'',\emptyset}(K_m,T_{f,n})\xrightarrow{\,\res_{S'',S'}\,} \underbrace{\bigoplus_{\substack{q\mid S''\\ q\nmid S'}}H^1(K_{m,q},T_{f,n})}_{M(S'',S')}\lra 0$$
is exact. Indeed, the cokernel of $\res_{S'',S'}$ is isomorphic to a submodule of ${\rm Sel}_{S',0}(K_m,A_{f,n})^\vee$ by the Poitou--Tate global duality. By a control theorem  (cf. \cite[Lemma 3.5.3]{mr02}), we have 
$${\rm Sel}_{S',0}(K_m,A_{f,n})^\vee_{\Gamma}\simeq {\rm Sel}_{S',0}(K,A_{f,n})^\vee=\{0\}\,,$$ 
where the vanishing follows from the core-$n$-admissibility of $S'$. This fact combined with Nakayama's lemma shows that ${\rm Sel}_{S',0}(K_m,A_{f,n})^\vee=0$, as required. 

Since $S''\in \mathfrak{N}_n$ is $n$-admissible, the $\LL_{m,n}$-module $M(S'',S')$ is free, and therefore flat. This shows that the map \eqref{eqn_2023_30_05_1615} is injective, and the proof concludes.
\end{proof}

\begin{defn}
    \label{def_2023_05_30_1624}
    In the notation of Proposition~\ref{eqn_prop_DIProp4_3}, we define the signed Heegner classes 
    $$\kappa(S,n)_m^\bullet \in H^1_{S',\{\}}(K_m,T_{f,n})\Big{/}(\omega_m^\bullet)\,,\qquad \bullet\in \{+,-\}$$ 
    by  
\begin{align*}
    \kappa(S,n)_m^+&:= (-1)^{\frac{m}{2}}\eta(S,n)_m^+ \qquad \hbox{if $m$ is even}\, ,\\
    \kappa(S,n)_m^-&:= (-1)^{\frac{m+1}{2}}\eta(S,n)_m^- \qquad \hbox{if $m$ is odd}\,.
\end{align*}
\end{defn}

Arguing as in the proof of \cite[Lemma 2.9]{darmoniovita}, it follows that 
$$\kappa(S,n)^\pm:=\{\kappa(S,n)_m^\pm\,:\,(-1)^m=\pm 1 \}_m \in \varprojlim_{(-1)^m=\pm 1} H^1_{S',\emptyset}(K_m,T_{f,n})\Big{/}(\omega_m^\pm)=\widehat{H}^1_{S',\emptyset}(K_{\infty},T_{f,n}),$$
where the inverse limit is with respect to the corestriction maps. 

\subsubsection{}
\label{subsubsec_632_2023_09_19}
For $\bullet\in\{+,-\}$, combining the discussion in \S\ref{subsec_8_1_2022_09_27_0906} with \eqref{eqn_2023_04_17_1607}, we now have 
elements
    \[
    \{\kappa(S,n)^\bullet:S\in\fN_n^\inde\},\quad \{\lambda(S,n)^\bullet\in \Lambda_n:S\in\fN_n^\de\}\,.
    \]
Our goal is to verify that they satisfy the properties listed in Definition~\ref{defn_6_1_2023_04_17} so that Theorem~\ref{thm_bipartite_abstract_main} applies with $\kappa(1)^\bullet$ and~$\lambda(S,n)^\bullet$ in the place of $\hbox{\upshape\texttt K}(1)^\bullet$ and ~$\hbox{\upshape\texttt L}(S,n)^\bullet$, respectively. 

This amounts to verifying: 
\begin{itemize}
    \item[(\mylabel{item_T1}{\textbf{T1}})] For every positive integer $n$ and $S\in \fN_n^\inde$, we have  $\kappa(S,n)^\bullet\in \widehat{H}^{1,\bullet}_{{\rm ord}_S}(K_\infty,T_{f,n})$. 
        \item[(\mylabel{item_T2}{\textbf{T2}})]  For any $S\ell\in \fN_n^\inde$, there is an isomorphism of $\Lambda$-modules
        \[
        \varprojlim_m H^1_\ord (K_{m,\ell},\Tfn)\simeq \Lambda_n
        \]
        sending the image of $\res_\ell(\kappa(S\ell,n)^\bullet)$ in $ \varprojlim_m H^1_\ord (K_{m,\ell},\Tfn)$ to $\lambda(S,n)^\bullet$.
         \item[(\mylabel{item_T3}{\textbf{T3}})] For any $S\ell\in \fN_n^\de$, there is an isomorphism of $\Lambda$-modules
        \[
        \varprojlim_m H^1_\ur(K_{m,\ell},\Tfn)\simeq \Lambda_n
        \]
        sending $\res_\ell(\kappa(S,n)^\bullet)$ to $\lambda(S\ell,n)^\bullet$.
 \item[(\mylabel{item_T4}{\textbf{T4}})] $\kappa(1)^\bullet\neq 0$.
        
\end{itemize}


\subsubsection{Local properties of signed Heegner classes}
\label{sec_Heegner_local_properties_at_p}
The aim of this subsection
is to verify the property \eqref{item_T1}. 

\begin{lemma}
\label{lemma_local_conditions_Heeg_inert_case}
For every positive integer $n$ and $S\in \fN_n^\inde$, we have the following. 
\begin{itemize}
\item[i)] The restriction $\res_p(\kappa(S,n)^\pm)$ belongs to $ \widehat{H}^{1,{\pm}}(K_{\infty,p},T_{f,n})$.
\item[ii)] For any prime $q$ of $K$ that does not divide $pS$, we have
$\res_{q}(\kappa(S,n)^\pm)\in \widehat{H}^1_{\rm ur}(K_{\infty,q},T_{f,n})$.
\item[iii)] For any prime $q$ of $K$ that divides $S$, we have
$\res_{q}(\kappa(S,n)^\pm)\in \widehat{H}^1_{\rm ord}(K_{\infty,q},T_{f,n})$.
\end{itemize}
As a result, the property \eqref{item_T1} holds:
$$\kappa(S,n)^\pm\in  \widehat{H}^{1,{\pm}}_{{\rm ord}_S}(K_{\infty},T_{f,n})\,.$$
\end{lemma}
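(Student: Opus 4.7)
The plan is to deduce all three assertions from the corresponding local properties of the unsigned Heegner classes $\kappa(S,n)_m\in H^1_{{\rm ord}_S,\{\}}(K_m,T_{f,n})$, established in Lemma~\ref{lemma_2023_04_23_2146}, via the defining relation
$$\widetilde{\omega}_m^\circ\,\kappa(S,n)_m^\bullet=\pm\kappa(S,n)_m \quad \text{in }\, H^1_{S',\{\}}(K_m,T_{f,n})/(\omega_m^\bullet),$$
where $\bullet={\rm sign}((-1)^m)$ and $\circ$ denotes the opposite sign.

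Parts (ii) and (iii) can be handled uniformly. For a prime $\ell\neq p$ as in the statement, let $Q_\ell$ denote the appropriate target through which the local condition is tested: the singular quotient $H^1(\Kml,T_{f,n})/H^1_{\rm ur}(\Kml,T_{f,n})$ in case (ii), or the quotient $H^1(\Kml,T_{f,n})/H^1_{\rm ord}(\Kml,T_{f,n})\simeq H^1_{\rm ur}(\Kml,T_{f,n})$ in case (iii), the latter identification provided by Lemma~\ref{lemma_howard_06_221}. In both situations $Q_\ell$ is a free $\Lambda_{m,n}$-module of rank one. The image of $\kappa(S,n)_m$ in $Q_\ell$ vanishes: in case (ii) by the very definition of the local condition of $H^1_{S,\{\}}$ at $\ell\nmid pS$, and in case (iii) by Lemma~\ref{lemma_2023_04_23_2146}. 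Multiplying the defining relation through, the image of $\kappa(S,n)_m^\bullet$ in $Q_\ell/(\omega_m^\bullet)$ is annihilated by $\widetilde{\omega}_m^\circ$. Since $\omega_m^\bullet$ and $\widetilde{\omega}_m^\circ$ vanish on disjoint sets of finite-order characters of $\Gamma$, an elementary Weierstrass-preparation argument shows that $\widetilde{\omega}_m^\circ$ acts injectively on $\Lambda_{m,n}/(\omega_m^\bullet)$ up to a $p$-bounded error; passing to the inverse limit over $m$ of the appropriate parity (while working modulo $p^n$) forces the vanishing of the image in $\widehat{H}^1(K_{\infty,\ell},T_{f,n})/\widehat{H}^1_{?}(K_{\infty,\ell},T_{f,n})$, yielding (ii) and (iii).

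For (i), the plan is to show directly that $\col^\pm_{T_{f,n}}\!\bigl(\res_p\kappa(S,n)^\pm\bigr)=0$ in $\mathfrak{R}_n$. At each level $m$ of parity matching $\bullet$, set $\xi_m:=\col^\pm_{m,n}\!\bigl(\res_p\kappa(S,n)_m^\bullet\bigr)$, which lies in $\widetilde{\omega}_m^\pm\mathfrak{R}_{m,n}\simeq\mathfrak{R}_n/(\omega_m^\mp)$ via the target description \eqref{eqn_2023_04_17_1459}. The compatible system $\{\xi_m\}$ realises the Coleman image of interest under the identification $\mathfrak{R}_n\simeq\varprojlim_m\mathfrak{R}_n/(\omega_m^\mp)$. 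By $\Lambda$-linearity of the Coleman map together with the defining relation,
$$\widetilde{\omega}_m^\circ\,\xi_m=\pm\col^\pm_{m,n}\!\bigl(\res_p\kappa(S,n)_m\bigr),$$
while Lemma~\ref{eqn_lemma_DILemma4_2} supplies $\omega_m^\bullet\,\kappa(S,n)_m=0$. Chasing these two identities through the chain of isomorphisms above and invoking the coprimality of $\omega_m^\bullet$ with $\widetilde{\omega}_m^\circ$ modulo $p^n$ forces $\xi_m=0$ in the inverse limit.

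The main obstacle, as I see it, is the final inverse-limit step in (i): one must verify that the apparent freedom in $\xi_m$ modulo the annihilator of $\widetilde{\omega}_m^\circ\widetilde{\omega}_m^\pm$ does not accumulate a non-trivial error as $m$ grows, particularly in light of the exceptional-zero phenomenon for $+$-theta elements noted in the remark following Conjecture~\ref{conj:IMC}. The analogous computation in the cyclotomic supersingular setting appears in \cite[\S4]{darmoniovita}, and the plan is to adapt it to the present inert setting, with the local points $d_m^\pm$ furnished by Theorem~\ref{thm:Q-inert} playing the role of Kobayashi's signed cyclotomic local points.
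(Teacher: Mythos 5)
Your treatment of parts (ii) and (iii) is essentially aligned with the paper's: the key input is that the quotient module $Q_\ell$ is free over $\Lambda_{m,n}$, from which multiplication by $\widetilde\omega_m^\circ$ is seen to be \emph{genuinely} injective on $Q_\ell/(\omega_m^\bullet)$ — if $\widetilde\omega_m^\circ f \equiv 0 \pmod{\omega_m}$, then $\widetilde\omega_m^\circ f = \omega_m^\bullet\widetilde\omega_m^\circ g$ and one cancels the non-zero-divisor $\widetilde\omega_m^\circ$ to get $f\in(\omega_m^\bullet)$. Your ``Weierstrass preparation up to a $p$-bounded error'' phrasing is both unnecessary and, taken at face value, incorrect (the kernel of $\widetilde\omega_m^\circ$ on a free rank-one $\Lambda_n/(\omega_m^\mp)$-module is $X\cdot\Lambda_n/(\omega_m^\mp)$, which is not $p$-bounded as $m$ grows); but the underlying freeness argument is right and is exactly what the paper runs, via Lemma~\ref{lemma_howard_06_221} and the same diagram chase used in (i).

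Your plan for part (i), however, has a genuine gap and would not go through as written. The crucial input to the paper's proof is that $\res_p(\kappa(S,n)_m)$ lies in the Bloch--Kato finite subgroup $H^1_{\rm f}(K_{m,p},T_{f,n})\subset H^{1,+}(K_{m,p},T_{f,n})$, because Heegner classes are Kummer images of points on abelian varieties; this is precisely where \cite[Proposition~6.5]{BBL1} enters. Your argument never uses this fact. Instead you try to deduce the vanishing of $\xi_m$ from the global norm relation $\omega_m^\bullet\kappa(S,n)_m=0$ of Lemma~\ref{eqn_lemma_DILemma4_2}. But applying $\col^\pm_{m,n}\circ\res_p$ to that relation, combined with $\widetilde\omega_m^\circ\xi_m=\pm\col^\pm_{m,n}(\res_p\kappa(S,n)_m)$, only yields $\omega_m^\bullet\widetilde\omega_m^\circ\xi_m=\omega_m\,\xi_m=0$ — which is automatic since $\xi_m$ already lives in $\mathfrak{R}_n/(\omega_m^\mp)$ and $\omega_m^\mp\mid\omega_m$, so you learn nothing. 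The invoked ``coprimality of $\omega_m^\bullet$ with $\widetilde\omega_m^\circ$ modulo $p^n$'' is also not available: both are distinguished polynomials reducing to powers of $X$ modulo $p$, so any Bézout identity has a constant term that is a power of $p$ growing with $m$, which eventually exceeds $p^n$. And even granting coprimality, the only thing your relations constrain is $\widetilde\omega_m^\circ\xi_m$, which you have not shown vanishes — the chase gives no handle on $\xi_m$ itself. The fix is to establish $\res_p(\kappa(S,n)_m)\in H^1_{\rm f}(K_{m,p},T_{f,n})$ from the Kummer-image description of the Heegner classes, deduce $\widetilde\omega_m^\circ\res_p(\kappa(S,n)_m^\bullet)\in H^{1,\pm}(K_{m,p},T_{f,n})$, and then run the paper's diagram chase relying on freeness of $H^{1,\pm}(K_{m,p},T_{f,n})$ and $H^1(K_{m,p},T_{f,n})$ over $\Lambda_{m,n}$ (see \cite[Lemma~4.12]{BBL1}).
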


\begin{proof}
\begin{itemize}
\item[i)] We prove the assertion for $\kappa(S,n)^+$ by an argument which also applies to $\kappa(S,n)^-$. 

Let $S'$ be a core-$n$-admissible integer divisible by $S$ as in the statement of Proposition~\ref{eqn_prop_DIProp4_3}. Note that 
$$\kappa(S,n)^+=\{\kappa(S,n)^+_m\}\in \varprojlim_{m:\, {\rm even}} {H}^1_{S',\emptyset}(K_m,T_{f,n})/(\omega_m^+)=\widehat{H}^1_{S',\emptyset}(K_\infty,T_{f,n})\,,$$
The assertion, therefore, amounts to 
\begin{equation}
    \label{eqn_2022_09_27_1541}
    \res_p\left(\kappa(S,n)^+_m\right)\in {H}^{1,{\pm}}(K_{m,p},T_{f,n})/(\omega_m^+)
\end{equation}
for any positive even integer $m$. To see this, observe that 
$$\widetilde \omega_m^-\res_p\left(\kappa(S,n)^+_m\right)=\res_p\left(\widetilde \omega_m^-\kappa(S,n)^+_m\right)=(-1)^{\frac{m}{2}}\res_p(\kappa(S,n)_m)\in H^1_{\rm f}(K_{m,p},T_{f,n})$$
by \cite[Proposition 6.5]{BBL1} and the construction of Heegner classes as the Kummer images (cf. \cite[\S7]{BertoliniDarmon2005}). Moreover, since $H^1_{\rm f}(K_{m,p},T_{f,n})\subset H^{1,+}(K_{m,p},T_{f,n})$ by definition, we deduce that 
\begin{equation}
    \label{eqn_2022_09_27_1521}
    \widetilde \omega_m^-\res_p\left(\kappa(S,n)^+_m\right)\in H^{1,+}(K_{m,p},T_{f,n})\,.
\end{equation}

Consider the following exact sequence of $\LL_{m,n}$-modules:  
\begin{equation}
    \label{eqn_2022_09_27_1522}
     0\lra H^{1,+}(K_{m,p},T_{f,n})\lra H^{1}(K_{m,p},T_{f,n}) \lra  H^1_{/+}(K_{m,p},T_{f,n}) \lra 0\,,
\end{equation}
where the right-most module is just defined by the exactness.
Applying the functor $(-)\otimes \LL/(\omega_m^+)$ 
to \eqref{eqn_2022_09_27_1522}, we obtain the exact sequence
\begin{equation}
    \label{eqn_2022_09_27_1527}
   H^{1,+}(K_{m,p},T_{f,n})/(\omega_m^+)\lra H^{1}(K_{m,p},T_{f,n})/(\omega_m^+) \lra  H^1_{/+}(K_{m,p},T_{f,n})/(\omega_m^+) \lra 0\,.
\end{equation}
Furthermore, we have the following commutative diagram with exact rows:
\begin{equation}
    \label{eqn_2022_09_27_1529}
    \begin{aligned}
    \xymatrix{
        & H^{1,+}(K_{m,p},T_{f,n})/(\omega_m^+)\ar[r]^{f_1}\ar@{^{(}->}[d]_{\times\, \widetilde \omega_m^-}^{v_1}& H^{1}(K_{m,p},T_{f,n})/(\omega_m^+) \ar[r]^{f_2}\ar@{^{(}->}[d]_{\times\, \widetilde \omega_m^-}^{v_2}&  H^1_{/+}(K_{m,p},T_{f,n})/(\omega_m^+) \ar[r] \ar@{-->}[d]^{v}& 0\\
        0\ar[r]&H^{1,+}(K_{m,p},T_{f,n})\ar[r]_{g_1}& H^{1}(K_{m,p},T_{f,n}) \ar[r]_{g_2} &  H^1_{/+}(K_{m,p},T_{f,n}) \ar[r]& 0\,.
        }
    \end{aligned}
\end{equation}
Note that the vertical maps in the middle and on the left are given by multiplication by $\widetilde \omega_n^-$ and they are injective since the $\LL_{m,n}$-modules $H^{1,+}(K_{m,p},T_{f,n})$ and $H^{1}(K_{m,p},T_{f,n})$ are both free by \cite[Lemma~4.12]{BBL1}. The dotted vertical arrow $v$ is induced from the exactness of the first row and the commutativity of the square on the left. 

We would like to prove, relying on \eqref{eqn_2022_09_27_1521}, the containment  \eqref{eqn_2022_09_27_1541}, which is equivalent to the assertion that 
$$\res_p(\kappa(S,n)^+_m)\in {\rm im}(f_1)=\ker(f_2)\,.$$ 
Chasing the diagram \eqref{eqn_2022_09_27_1529}, this follows once we check that the vertical map $v$ in this diagram is injective, which in turn is equivalent to, by the snake lemma, that the induced map
$$H^{1,+}(K_{m,p},T_{f,n})/(\widetilde \omega_m^-)={\rm coker}(v_1)\lra {\rm coker}(v_2)=H^{1}(K_{m,p},T_{f,n})/(\widetilde \omega_m^-)$$
is injective. The latter follows from the following commutative diagram, where the vertical maps are injective since the $\LL_{m,n}$-modules $H^{1,+}(K_{m,p},T_{f,n})$ and $H^{1}(K_{m,p},T_{f,n})$ are both free:
\begin{equation}
    \label{eqn_2022_09_27_1558}
    \begin{aligned}
    \xymatrix{
     H^{1,+}(K_{m,p},T_{f,n})/(\widetilde \omega_m^-) \ar[r]\ar@{^{(}->}[d]_{\times\, \omega_m^+}& H^{1}(K_{m,p},T_{f,n})/(\widetilde \omega_m^-) \ar@{^{(}->}[d]_{\times\, \omega_m^+}\\
        H^{1,+}(K_{m,p},T_{f,n})\ar@{^{(}->}[r]&  H^{1}(K_{m,p},T_{f,n})\,.
        }
    \end{aligned}
\end{equation}
\item[ii)] For primes $q|N_0$, the assertion just follows from \eqref{eqn_construct_Heeg_via_level_raising}.

So it suffices consider the case  $q\mid S'$, where  
$S'$ is a core-$n$-admissible integer divisible by $S$ as in the statement of Proposition~\ref{eqn_prop_DIProp4_3}. 
For such $q$, the assertion follows by the argument for (i), relying on the freeness of the $q$-local cohomology as in Lemma~\ref{lemma_howard_06_221}. Alternatively, we can set $S'$ in the statement of Proposition~\ref{eqn_prop_DIProp4_3} to be an integer that is not divisible by $q$. The aforementioned proposition then implies the desired inclusion for all $q\nmid pS$.
\item[iii)] One proves this part also following the argument in (ii) and utilizing the freeness assertion in  Lemma~\ref{lemma_howard_06_221}.
\end{itemize}
\end{proof}

When $S=1$ is the empty product, we have
$$\kappa(1,n)^\bullet \in \widehat{H}^1_{\{\}}(K_\infty,\Tfn)$$
for all positive integers $n$, and 
$$\kappa(1)^\bullet:=\{\kappa(1,n)^\bullet\}_n \in \varprojlim_n\widehat{H}^{1}_{\{\}}(K_\infty,\Tfn)=H^1(K,\TT_{f}^\ac)\,.$$

\subsubsection{Reciprocity laws} The aim of this subsection is to verify the properties \eqref{item_T2} and \eqref{item_T3}. 

Fix positive integers $m$ and $n$, as well as $S\in \fN_n$, a prime $\ell \in \fN_n$ that is coprime to $S$. It follows from Lemma~\ref{lemma_howard_06_221} that we have isomorphisms
\begin{equation}
    \label{eqn_2023_05_31_1217}
    \partial_\ell\,:\,H^1_{\rm ord}(K_{m,\ell},T_{f,n})\xrightarrow{\,\sim\,} \LL_{m,n}
\end{equation}
\begin{equation}
    \label{eqn_2023_05_31_1220}
    v_\ell\,:\,H^1_{\rm ur}(K_{m,\ell},T_{f,n})\xrightarrow{\,\sim\,} \LL_{m,n}
\end{equation}
which are determined by the choice of a topological generator of the tame inertia group $I_{K_\ell}^{\rm t}$ and an $\cO_L/(\varpi^n)$-module basis of $T_{f,n}^{{\rm Fr}_{(\ell)}=\ell^2}$, where ${\rm Fr}_{(\ell)}\in G_{K_\ell}/I_{K_\ell}$ is the Frobenius (cf. \cite[Corollary~7.6]{BBL1}). We will also write $\partial_\ell$ (resp. $v_\ell$) in place of $\partial_\ell\circ \res_\ell$ (resp. $v_\ell\circ \res_\ell$).

First, assume that $S\ell \in \fN_n^\inde$. As utilized in \cite[Proof of Proposition 4.4]{darmoniovita}, the proof of \cite[Theorem 4.1]{BertoliniDarmon2005} in \S8 of {\it loc. cit.}  can be adapted\footnote{See also \cite[Construction 5.1.2]{sweeting}.} to the non-ordinary setting  to give
$$\partial_\ell(\kappa(S\ell, n)_m)\,\dot{=}\, \lambda(S,n)_m\,.$$
Here $''\dot{=}''$ denotes equality up to multiplication by an element $\LL_{m,n}^\times$ and we note that there is no assumption in \cite{BertoliniDarmon2005} on the splitting behaviour of the prime $p$ in $K/\QQ$. It follows from the definitions that we have 
$$\partial_\ell(\kappa(S\ell, n)_m^\pm)\,=\, \lambda(S,n)_m^\pm $$
$\LL_{n}/(\omega_m^\pm)$, 
up to multiplication by units in the ring $\LL_{n}/(\omega_m^\pm)$. The proof of \eqref{item_T2} follows by passing to limit in $m$ with appropriate parity, which is determined by the choice of the sign $\pm$.

Suppose now that $S \in \fN_n^\inde$. The verification of \eqref{item_T3} proceeds in an identical way, starting with the equality
$$v_\ell(\kappa(S, n)_m)\,\dot{=}\, \lambda(S\ell,n)_m \quad \mod \LL_{m,n}^\times$$
proved in \cite[Theorem 4.2]{BertoliniDarmon2005}. (See also \cite[Construction~5.1.2]{sweeting} for a more general form.) Note that these results apply regardless of the reduction type of $E$ at the prime $p$ or the splitting behaviour of $p$ in $K/\QQ$.

\subsubsection{} The desired non-vanishing 
\eqref{item_T4} of signed Heegner classes is a consequence of Mazur's conjecture established in \cite{cornut}. Indeed, proceeding as in the proof of \cite[Thm.~2.5]{pollack-weston11}, it suffices to show that the Heegner points $\{\kappa(1)_{m}\}_{m}$ are non-torsion for infinitely many $m$, which is the main result of \cite{cornut}.

\begin{remark}
For a refinement of Mazur's conjecture for primes $p$ split in $K$, the reader may refer to  \cite{burungale2020,burungale2017}. The non-split analogue will be considered in a future work.  
\end{remark}

\subsection{Proof of Theorem~\ref{thm:main}} 
In view of the construction of signed Heegner point bipartite Euler systems  in \S\ref{subsec_8_1_2022_09_27_0906}, Theorem~\ref{thm_bipartite_abstract_main} yields parts (i)--(ii) of Theorem~\ref{thm:main}. More precisely, we have the following.

\begin{theorem}
\label{thm_bipartite_Heegner_main}
Let $E/\QQ$ be a semistable elliptic curve of conductor $N_{0}$ and $f$ the associated elliptic newform, $p\geq 5$ a prime of good supersingular reduction. Let $K$ be an imaginary with discriminant coprime to $N_0$ satisfying \eqref{item_GHH}. Suppose also that $p$ is inert in $K$, and satisfies \eqref{item_Loc}.
Let $\bullet\in \{+,-\}$. 
\begin{itemize}
    \item[i)]We have $$\rank_\Lambda\, H^{1,\bullet}(K,\TT_f^\ac)=\rank_\Lambda\,\Sel_\bullet(K_\infty,A_f)^\vee=1.$$
    \item[ii)] 
    We have
    $${\rm char}_{\LL}\left(H^{1,\bullet}(K,\TT_{f}^\ac)/\LL\cdot\kappa(1)^\bullet \right)^2 \subset {\rm char}_{\LL}(\Sel_\bullet(K_\infty,A_f)^\vee_{\rm tor})\,.$$
    \item[iii)] The inclusion in part (ii) is an equality upon localisation at a height one prime $\fP$ of $\LL$ if the following holds: There exists a positive integer $k_0=k_0(\mathfrak{P})$ with the property that for an integer $n\geq k_0$, there exists an integer $S\in \fN_n^{\de}$ such that $\lambda(S,n)^\bullet \in \LL_n$ has non-zero image in $\LL/(\mathfrak{P},p^n)$. In particular, if the latter hypothesis is satisfied by all height one prime ideals $\fP$ of $\LL$, then the inclusion in part (ii) is an equality.     
\end{itemize}
\end{theorem}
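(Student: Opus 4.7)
The plan is to deduce Theorem~\ref{thm_bipartite_Heegner_main} directly from the abstract criterion in Theorem~\ref{thm_bipartite_abstract_main} applied to the pair of families
\[
\bigl(\{\kappa(S,n)^\bullet\}_{S\in\fN_n^\inde,\,n\ge 1},\ \{\lambda(S,n)^\bullet\}_{S\in\fN_n^\de,\,n\ge 1}\bigr)
\]
constructed in \S\ref{subsec_8_1_2022_09_27_0906}. The only work is to verify that this data defines a $\bullet$-bipartite Euler system for $T_f$ in the sense of Definition~\ref{defn_6_1_2023_04_17}, and that $\kappa(1)^\bullet\neq 0$.

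First I would record the $n$-compatibility: by construction $\kappa(S,n)^\bullet$ is obtained from the Heegner class $\kappa(S,n)_m$ on the Shimura curve $X_S$ via Definition~\ref{def_2023_05_30_1624}, and Proposition~\ref{eqn_prop_DIProp4_3} guarantees independence of the auxiliary core-$n$-admissible lift. The $\Lambda$-adic class $\kappa(S,n)^\bullet$ is then the limit along the parity determined by $\bullet$, and similarly $\lambda(S,n)^\bullet$ is defined by Lemma~\ref{lemma_DI_Lemma_2_9} and \eqref{eqn_2023_04_17_1607}; in both cases compatibility in $n$ under reduction mod $p^n$ is immediate from the definitions. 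Property \eqref{item_T1}, that $\kappa(S,n)^\bullet\in \widehat{H}^{1,\bullet}_{{\rm ord}_S}(K_\infty,\Tfn)$, is exactly Lemma~\ref{lemma_local_conditions_Heeg_inert_case}.

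Next, I would establish the two explicit reciprocity laws. For \eqref{item_T2} (resp.\ \eqref{item_T3}), the choice of a topological generator of $I_{K_\ell}^{\rm t}$ together with a basis of $T_{f,n}^{{\rm Fr}_{(\ell)}=\ell^2}$ gives the isomorphisms $\partial_\ell$ and $v_\ell$ of \eqref{eqn_2023_05_31_1217}--\eqref{eqn_2023_05_31_1220}. The reciprocity identities
\[
\partial_\ell(\kappa(S\ell,n)_m)\,\dot{=}\,\lambda(S,n)_m,\qquad v_\ell(\kappa(S,n)_m)\,\dot{=}\,\lambda(S\ell,n)_m
\]
are the mod $p^n$ level-raising identities of Bertolini--Darmon \cite{BertoliniDarmon2005} (in Sweeting's refined form in \cite{sweeting}); their derivation is purely automorphic and does not use the splitting behaviour of $p$ in $K/\QQ$. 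Projecting to the parity-respecting quotient $\LL_n/(\omega_m^\bullet)$ via Definition~\ref{def_2023_05_30_1624} and \eqref{defn_2023_04_17_1545} yields the same identities for the signed classes, and the inverse limit in $m$ then gives \eqref{item_T2} and \eqref{item_T3}.

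Finally, for \eqref{item_T4}, the non-triviality $\kappa(1)^\bullet\neq 0$ follows from the resolution of Mazur's conjecture in the inert non-ordinary setting due to the first-named author \cite{burungale2020,burungale2017}: the images of $\kappa(1)_m$ in $H^1_{\rm f}(K_m,T_f)$ are non-torsion for $m\gg 0$, and the relation $\widetilde{\omega}_m^{\circ}\,\eta(S,n)_m^\bullet=\kappa(S,n)_m$ of Proposition~\ref{eqn_prop_DIProp4_3} propagates this non-vanishing to the signed classes $\kappa(1,n)^\bullet$. Granting these four properties, Theorem~\ref{thm_bipartite_abstract_main}(i)--(iii) delivers parts (i)--(iii) of Theorem~\ref{thm_bipartite_Heegner_main} verbatim, with $\kappa(1)^\bullet$ in the role of $\hbox{\upshape\texttt K}(1)^\bullet$ and $\lambda(S,n)^\bullet$ in the role of $\hbox{\upshape\texttt L}(S,n)^\bullet$. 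The main conceptual obstacle has already been dispatched in the body of the paper, namely the signed local control at $p$ underlying Lemma~\ref{lemma_local_conditions_Heeg_inert_case} and Proposition~\ref{prop_control_thm_at_p_fP}; given that, the present theorem is a formal consequence.
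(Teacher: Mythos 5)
Your proof proposal is correct and mirrors the paper's own argument exactly: you verify properties \eqref{item_T1}--\eqref{item_T4} for the families $\{\kappa(S,n)^\bullet\}$ and $\{\lambda(S,n)^\bullet\}$ (citing Lemma~\ref{lemma_local_conditions_Heeg_inert_case} for the $p$-local condition, the Bertolini--Darmon/Sweeting reciprocity laws for \eqref{item_T2}--\eqref{item_T3}, and Mazur's conjecture via \cite{burungale2020,burungale2017} for non-vanishing), and then invoke the abstract Theorem~\ref{thm_bipartite_abstract_main}. This is precisely how \S\ref{subsubsec_632_2023_09_19} and \S\ref{sec_Heegner_local_properties_at_p} conclude the proof in the paper.
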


For part (iii) of Theorem~\ref{thm:main}, it is a consequence of the following proposition verifying the equality criterion of Theorem~\ref{thm_bipartite_Heegner_main}(iii) for $\bullet=-$.

\begin{proposition}\label{prop:equality}
    Suppose that $E$ is semistable, and that the hypothesis \eqref{item_Loc} holds. 
    Then the criterion in part (iii) of Theorem~\ref{thm_bipartite_Heegner_main} is satisfied for $\bullet=-$ by any height one prime ideal of $\Lambda$.
\end{proposition}
\begin{proof}
The following argument proceeds along the line of \cite[proof of Proposition~3.7]{BCK} (see also~\cite[proof of Theorem~A.1]{CHKLL}). 
   
Put $r=\dim_{\Fp}\Sel(K,A_{f,1})$. By the parity conjecture, as established in \cite{BDKim_Parity,DD_parity},  note that $r$ is an odd integer. It then follows from Proposition~\ref{prop_sweeting_lifting}  that there exist $r'$ admissible primes\footnote{An analysis of the proof of Proposition~\ref{prop_sweeting_lifting} shows that $r'$ can be taken to be $r$.} $q_1,\ldots,q_{r'}$ and an elliptic newform $g=g_S\in S_2(\Gamma_0(N_0S))$ for $S=q_1\cdots q_{r'}$, such that $T_{g,1}$ and $T_{f,1}$  are isomorphic $G_\QQ$-representations, and 
\[\Sel(K,A_{g})=0.\]
Therefore, the $p$-part of the Birch and Swinnerton-Dyer formula for $A_g$ over $K$ as established\footnote{These results require the level to be square-free.} in \cite[Theorem 1.5]{bstw} and \cite[Theorem~C]{CCSS} implies that $$\frac{L(g/K,1)}{\Omega_g^{\mathrm{can}} \eta_{g,N^+,N^-S}}$$ is a $p$-adic unit. Indeed, the BSD formula implies that $$\frac{L(g/K,1)}{\Omega_{g}^{\rm can}\prod_{\ell \mid N_{0} S}c_{\ell}}$$ is a $p$-adic unit where $c_\ell$ denotes the Tamagawa number of $g$ over $K$. On the other hand, the $p$-adic valuation of $\prod_{\ell \mid N^{-}S}c_{\ell}$ is the same as  that of $\eta_{g,N^{+},N^{-}S}$ by \cite[Theorem~6.4]{zhang14}, and $\prod_{\ell\mid N^{+}}c_{\ell}$ is a $p$-adic unit by the first case of \eqref{item_Loc}.  
Hence, Lemma~\ref{lem:interpolation} concludes the proof of (iii). 
\end{proof}

\section{The split case}\label{appA}

In this section, we generalize the results of the previous sections to the non-ordinary case when $p$ is split in $K/\QQ$ and $a_p(f)$ is not necessarily zero. The main result is Theorem \ref{thm:appendix} below.  
While the strategy is similar, some of the aspects of its elaboration differ (cf.~\S\ref{pmth}). 
\subsection{Setting}
We retain the preceding notation. 

In particular, $K$ is an imaginary quadratic field. Let $p\geq 5$ be a prime split in $K$. Let $K_\infty$ be the anticyclotomic $\ZZ_p$-extension of $K$ and $\Gamma=\Gal(K_\infty/K)$. Fix embeddings $\iota_\infty: \overline{\QQ}\hookrightarrow \CC$ and $\iota_p:\overline{\QQ}\hookrightarrow \overline{\QQ}_p$. 

Throughout, $f$ is a weight two newform of level $N_0=N^+N^-$, where $N^+$ (resp. $N^-$) is only divisible by primes which are split (resp. inert) in $K$. Suppose that the hypothesis \eqref{item_GHH} holds.
Let $F$ be the Hecke field generated by the Fourier coefficients of $f$. Unlike in previous sections, we do not assume that $F=\QQ$. Fix a prime $v$ of $F$ lying above $p$ arising from $\iota_p$ and let $L$ be the completion of $F$ at $v$. The ring of integers of $L$ is denoted by $\cO_L$. Fix   a uniformizer $\varpi$ of $L$. Assume that 
\[
\ord_\varpi(a_p(f))>0.
\]
In particular, this includes the case where $a_p(f)=0$.

Let $\rho_f$ be the associated $p$-adic Galois-representation. We assume the following hypothesis:
\begin{itemize}
       \item[(\mylabel{item_BI'}{\textbf{Im}})]  
        The residual Galois representation $\bar{\rho}_f$ 
        is surjective.
 \item[(\mylabel{item_Loc'}{\textbf{ram}})]
        If $\ell\mid N_0$, then $\bar{\rho}_E$ is ramified at $\ell$ in either of the following cases:
        \begin{itemize}
        \item[\tiny{$\circ$}] $\ell\mid N^{+}$,
        \item[\tiny{$\circ$}] $\ell \mid N^-$ and $\ell^2\equiv 1\mod{p}$.
        \end{itemize}

        \end{itemize}
In addition, as in \cite{BBL1}, assume:
\begin{itemize}
    \item [(\mylabel{item_iso}{\textbf{iso}})] If $a_{p}(f)\neq 0$, then the newform $f$ is $p$-isolated.
\end{itemize}

Let $\cA_f$ be a $\GL_2$-type abelian variety associated with $f$. Let $T_f$ be the $v$-adic Tate module of $\cA_f$. Put $V_f=T_f\otimes_{\cO_L}L$ and $A_f=V_f/T_f$. Given an integer $n\ge0$, write $\Tfn=T_f/\varpi^n T_f$ and $\Afn=A_f[\varpi^n]$. 

Let $\Lambda$ denote the Iwasawa algebra $\cO_L[[\Gamma]]$, where $\Gamma=\Gal(K_\infty/K)$. Given integers $m,n\ge0$, let $\Lambda_n=\Lambda/(\varpi^n)$ and $\Lambda_{m,n}=\cO_L/(\varpi^n)[G_m]$. 
\subsection{Coleman maps and local conditions}
Let $w$ be a place of $K_\infty$ lying above $p$. Let $\fp$ be the place of $K$ lying below $w$. Let $\Gamma_w=\Gal(K_{\infty,w}/K_\fp)\cong\Zp$ be the decomposition group of $w$ in $\Gamma$. To lighten notation, we shall write $k_\infty=K_{\infty,w}$. 

For an integer $ m\ge0$, let $k_m$  denote the intermediate extension of $k_\infty/K_v$ that is of degree $p^m$. Let $\cG_m=\Gal(k_m/k_0)$ and define
$$\mathfrak{R}=\cO_L[[\Gamma_w]]=\varprojlim \cO_L[\cG_m].$$ 
Given an integer $n\ge 1$,  write $\mathfrak{R}_n=\mathfrak{R}/p^n \mathfrak{R}=(\cO_{L}/p^n\cO_{L})[[\Gamma_w]]$. If $m\ge0$ is another integer, let $\mathfrak{R}_{m,n}$ denote the group ring $(\cO_{L}/p^n\cO_{L})[\cG_m]$.

As in \cite[\S4.2]{BBL1}, one may construct a primitive $Q$-system  $(d_m)_{m\ge0}$, where $d_m\in H^1_\mathrm{f}(k_m,\Tfn)$ satisfying the norm relation
\[
\cor_{k_{m+1}/k_m}(d_{m+1})-a_p(f)d_m+\res_{k_{m-1}/k_m}(d_{m-1})=0\ \forall m\ge1.
\]
It leads to surjective Coleman maps
\[
\col^\bullet_{\Tfn}:\HIw(k_\infty,\Tfn)\rightarrow \fR_n,\ \binsf.
\]
More concretely, let $C_{f,m}=\begin{pmatrix}
    a_p(f)&1\\-\Phi_m&0
\end{pmatrix}$ and write $H_{f,m}:\fR_{m,n}^2\rightarrow \fR_{m,n}^2$ for the $\fR_n$-morphism given by 
$$\begin{pmatrix}
    x\\y
\end{pmatrix}\longmapsto C_{f,m}\cdots C_{f,1}\begin{pmatrix}
    x\\y
\end{pmatrix}.$$ 
Let us denote by 
\[
\langle-,-\rangle_{m,n}\,:\, H^1(k_m,\Tfn)\times H^1(k_m,\Tfn)\lra  \cO_L/(\varpi^n)
\]
the usual local pairing. We have
\[
H_{f,m}\begin{pmatrix}
    \col^\sharp\\\col^\flat
\end{pmatrix}\equiv \begin{pmatrix}
    \col_{\bd,m}\\ -\xi_{m-1}\col_{\bd,m-1}
\end{pmatrix}\mod\omega_m,
\]
where $\col_{\bd,m}:H^1(k_m,\Tfn)\rightarrow \fR_{m,n}$ is given by
\[
z\longmapsto \sum_{\sigma\in G_m}\langle z^{\sigma},d_m\rangle_{m,n}\cdot \sigma^{-1}
\]
and $\xi_{m-1}:\fR_{m-1,n}\rightarrow \fR_{m,n}$ is the norm map as defined in \S\ref{S:norm-relations}.

Recall furthermore that these maps are compatible as $n$ varies (see \cite[Corollary 4.5, Lemma 4.6 and Proposition 4.7]{BBL1}). We may define local conditions at primes above $p$ using the kernels of these maps, and hence Selmer groups, as in \S\ref{sec:Sel} and \cite[\S7]{BBL1}. 

\subsection{Signed theta elements}\label{pmth}
The discussion in \S\ref{S:Shimura}, up until the end of \S\ref{subsection_shimura_curves} 
applies to the current setting. Let $f_S$ be defined as in \eqref{eq:f_S}. 

Since $a_p(f_S)$ is no longer assumed to be zero, we have the following more general norm relation:
\begin{equation}
        \label{eq:norm-7.1}\pi_{m+1,m}\lambda(S,n)_{m+1}=a_p(f_S)\lambda(S,n)_{m}-\xi_{m-1}\lambda(S,n)_{m-1}\,,\qquad \forall\,m\in \ZZ^+\,,
\end{equation}
 replacing \eqref{eqn_2023_04_17}, where $\xi_{m-1}$ is the norm map defined in \S\ref{S:norm-relations}. In turn, we replace Lemmas~\ref{lemma_DI_Prop_2_8} and \ref{lemma_DI_Lemma_2_9} with the following.

\begin{lemma}\label{lem:sharpflatpadicL}
    Let  $n$ be a positive integer. There exist  $\lambda(S,n)^\sharp$ and $\lambda(S,n)^\flat$ in $\cO_L/(p^n)[[\Gamma]]$ such that
    \[
    \begin{pmatrix}
        \lambda(S,n)_m\\ -\xi_{m-1}  \lambda(S,n)_{m-1}
    \end{pmatrix}
    \equiv H_{f,m}
    \begin{pmatrix}
        \lambda(S,n)^\sharp\\ \lambda(S,n)^\flat
    \end{pmatrix}\mod (\omega_m)
    \]
    for all $m\ge0$.
\end{lemma}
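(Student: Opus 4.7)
The plan is to adapt the construction of the Coleman maps $\col^\sharp,\col^\flat$ reviewed above (i.e., \cite[Proposition 4.7]{BBL1}) to the theta elements; the argument is formally identical, as the local points $d_m$ and the theta elements $\lambda(S,n)_m$ satisfy the same type of three-term norm relation (cf.~\S\ref{pmth}) and play interchangeable roles in the construction. Concretely, set
\[
v_m := \begin{pmatrix}\lambda(S,n)_m\\ -\xi_{m-1}\lambda(S,n)_{m-1}\end{pmatrix} \in \bigl(\Lambda_n/(\omega_m)\bigr)^2
\]
with the convention $\lambda(S,n)_{-1}=0$. It suffices to construct a compatible projective system $\{w_m\}_{m\ge 0}$ with $w_m \in \bigl(\Lambda_n/(\omega_m)\bigr)^2$ satisfying $H_{f,m}\,w_m \equiv v_m \pmod{\omega_m}$, since $\Lambda_n \simeq \varprojlim_m \Lambda_n/(\omega_m)$; the inverse limit then delivers the required pair $(\lambda(S,n)^\sharp,\lambda(S,n)^\flat) \in \Lambda_n^2$.

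The crucial algebraic identity is the congruence $C_{f,m+1}^{-1}\,v_{m+1} \equiv v_m \pmod{\omega_m}$. To check it, one computes directly
\[
C_{f,m+1}^{-1}\,v_{m+1} \;=\; \begin{pmatrix}\lambda(S,n)_m\\ \lambda(S,n)_{m+1} - a_p(f_S)\lambda(S,n)_m\end{pmatrix},
\]
and the norm relation $\pi_{m+1,m}\lambda(S,n)_{m+1} = a_p(f_S)\lambda(S,n)_m - \xi_{m-1}\lambda(S,n)_{m-1}$ identifies the second entry with $-\xi_{m-1}\lambda(S,n)_{m-1} \equiv -\Phi_m\lambda(S,n)_{m-1}$ modulo $\omega_m$. (Note that $a_p(f) \equiv a_p(f_S) \pmod{p^n}$, so $C_{f,m+1}$ coincides with its $f_S$-analog over $\Lambda_n$.)

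One then sets $w_m := H_{f,m}^{-1}\,v_m$; the compatibility $w_m \equiv w_{m-1} \pmod{\omega_{m-1}}$ is immediate from the identity above, since $H_{f,m}^{-1}=H_{f,m-1}^{-1}\,C_{f,m}^{-1}$. The main obstacle --- and the technical heart of the proof --- is verifying integrality, i.e., that $w_m$ actually lies in $\bigl(\Lambda_n/(\omega_m)\bigr)^2$. \emph{A priori}, $H_{f,m}^{-1}$ carries denominators $\prod_{k=1}^m \Phi_k$, but the shape of $v_m$ dictated by the norm relation forces all these factors to cancel (e.g., the first step reads $\Phi_{m+1}^{-1}(\lambda(S,n)_{m+1}-a_p(f_S)\lambda(S,n)_m)=-\lambda(S,n)_{m-1}$ modulo an ideal divisible by $\Phi_{m+1}$, thanks to the norm relation). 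The verification proceeds exactly as in \cite[Proposition 4.7]{BBL1}, with the theta-element norm relation replacing the local-point norm relation as the driving algebraic input.
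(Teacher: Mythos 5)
Your proposal is correct and takes essentially the same approach as the paper: the paper's proof is the one-line citation ``This follows from \cite[Theorem~3.4]{BBL1}'', and that theorem is precisely the abstract packaging of the construction you carry out explicitly (using the three-term norm relation to verify $C_{f,m+1}^{-1}v_{m+1}\equiv v_m\pmod{\omega_m}$, whence both the compatibility of the $w_m := H_{f,m}^{-1}v_m$ under projection and the cancellation of the $\Phi_k$-denominators). The one small correction is bibliographic: the integrality step should be attributed to \cite[Theorem~3.4]{BBL1} (the abstract decomposition result) rather than to \cite[Proposition~4.7]{BBL1}, which is the instance of that theorem applied to the local point system $(d_m)$.
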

\begin{proof}
    This follows from \cite[Theorem~3.4]{BBL1}.
\end{proof}
For $\bullet\in\{\sharp,\flat\}$, we write $\lambda(S,n)_m^\bullet$ to be the natural image of $\lambda(S,n)^\bullet$ in $\cO_L/(p^n)[G_m]$.

Another diverging point of the split case from the inert case treated in the previous sections is that Lemma~\ref{lem:interpolation} does not carry over. Instead, the following holds.

\begin{lemma}\label{lem:app-interpolation}
 Let  $g$ be a characteristic zero form lifting $f_S$. If $\displaystyle\dfrac{L(g/K,1)}{\Omega_g^{\mathrm{can}} \eta_{g,N^+,N^-S}}$ is a $p$-adic unit, then $\lambda(S,n)^\bullet(\one)$ is a $p$-adic unit for both $\bullet\in\{\sharp,\flat\}$. 
    \end{lemma}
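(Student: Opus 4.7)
The strategy adapts the inert-case proof of Lemma~\ref{lem:interpolation} to the non-vanishing $a_p(f)$ setting by exploiting the matrix decomposition of Lemma~\ref{lem:sharpflatpadicL}. First, I would evaluate the matrix relation at the trivial character of $\Gamma$ at level $m = 1$. Using that $\Phi_1$ specializes to $p$ at $\one$ and that $\xi_0$ becomes multiplication by $p$, the resulting $2\times 2$ system (whose matrix $\begin{pmatrix}a_p(f) & 1\\ -p & 0\end{pmatrix}$ has determinant $p$) unwinds to
\[
\lambda(S,n)^\sharp(\one) \equiv \lambda(S,n)_0 \pmod{\varpi^{n-1}}, \qquad \lambda(S,n)^\flat(\one) \equiv \lambda(S,n)_1(\one) - a_p(f)\lambda(S,n)_0 \pmod{\varpi^{n-1}}.
\]
Analogous identities hold in characteristic zero with $g$ in place of $f_S$, so via the congruence $g \equiv f_S \pmod{\varpi^n}$ the problem reduces to showing that $\lambda(g)^\sharp(\one) = g^{\JL}(x_0)$ and $\lambda(g)^\flat(\one) = \lambda(g)_1(\one) - a_p(g)\,g^{\JL}(x_0)$ are $p$-adic units. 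For the $\sharp$-case, I would apply the explicit Waldspurger formula of Cai--Su--Tian~\cite{cst} to obtain $g^{\JL}(x_0)^2 = u \cdot \mathcal{L}(g)$ where $\mathcal{L}(g) := L(g/K,1)/(\Omega_g^{\mathrm{can}} \eta_{g,N^+,N^-S})$ and $u\in\cO_L^\times$; under the hypotheses \eqref{item_BI} and \eqref{item_Loc}, the local correction factors at primes dividing $N^+N^-S$ are themselves $p$-adic units. Hence if $\mathcal{L}(g)$ is a $p$-adic unit, then so is $\lambda(g)^\sharp(\one)$.

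For the $\flat$-case, I would combine the identities $\lambda(g)_{\clubsuit,0}(\one) = \lambda(g)^\sharp(\one) + \lambda(g)^\flat(\one)/\clubsuit$ for $\clubsuit\in\{\alpha,\beta\}$ (derived from the definition of the $p$-stabilization) with the split analog of \eqref{eqn_2023_09_06_1914}, which should take the form $\lambda(g)_{\clubsuit,0}(\one)^2 = u(1-\clubsuit^{-1})^2\mathcal{L}(g)$ for a common $p$-adic unit $u$ serving both $\clubsuit = \alpha, \beta$. Clearing denominators yields $(\clubsuit\lambda(g)^\sharp(\one) + \lambda(g)^\flat(\one))^2 = u(\clubsuit - 1)^2 \mathcal{L}(g)$. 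Subtracting the $\alpha$- and $\beta$-versions and using $\alpha+\beta = a_p(g)$ together with the identity $(\alpha-1)^2-(\beta-1)^2 = (\alpha-\beta)(a_p(g)-2)$, the common factor $\alpha-\beta$ cancels to give
\[
\lambda(g)^\sharp(\one)\bigl(a_p(g)\lambda(g)^\sharp(\one) + 2\lambda(g)^\flat(\one)\bigr) = u(a_p(g) - 2)\mathcal{L}(g).
\]
Since $\ord_\varpi(a_p(g))>0$ and $p\geq 5$, the factor $a_p(g) - 2$ is a $p$-adic unit, so when $\mathcal{L}(g)$ and $\lambda(g)^\sharp(\one)$ are units, the expression $a_p(g)\lambda(g)^\sharp(\one) + 2\lambda(g)^\flat(\one)$ is forced to be a unit; since the first summand lies in $(\varpi)$, this yields that $\lambda(g)^\flat(\one)$ is a $p$-adic unit.

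The main obstacle is establishing the split analog of \eqref{eqn_2023_09_06_1914} with a common unit factor $u$ serving both $\clubsuit = \alpha, \beta$; this should follow from the same Chida--Hsieh framework adapted to the split case via the Cai--Su--Tian formula~\cite{cst}, but requires careful treatment of the local Euler factor at $p$ to reflect the splitting $p\cO_K = \p\bar\p$. A secondary concern is tracking the $\varpi^{n-1}$-versus-$\varpi^n$ precision in the initial matrix inversion step, which is inconsequential for unit-ness as soon as $n \geq 2$.
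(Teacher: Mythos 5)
Your proof is correct and uses the same key ingredients as the paper's: the Chida--Hsieh interpolation formula (the paper's \eqref{eq:CH-interpolation}, which indeed has a common unit $u$ for both $p$-stabilizations, resolving your stated concern), the Cai--Su--Tian explicit Waldspurger formula to deduce that $\lambda(g)_0(\one) = g^{\JL}(x_0)$ is a $p$-adic unit, and the evaluation of the matrix decomposition at the trivial character to get $\lambda(g)^\sharp(\one) = \lambda(g)_0(\one)$ and $\lambda(g)^\flat(\one) = \lambda(g)_1(\one) - a_p(g)\lambda(g)_0(\one)$.

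The one place you genuinely diverge is the $\flat$-case. The paper first combines the two Chida--Hsieh relations (mirroring \eqref{eqn_2023_09_06_1914}) to obtain \eqref{eq:combined}, namely $\lambda(g)_1(\one)^2 - \alpha\beta\,\lambda(g)_0(\one)^2 = (1 - 2(\alpha+\beta) + \alpha^2+\alpha\beta+\beta^2)\,\fu$, then observes that $\alpha\beta = p$ and the parenthesized factor is a unit to deduce $\lambda(g)_1(\one)$ is a unit, and finally reads off $\lambda(g)^\flat(\one)$. You instead substitute $\lambda(g)_{\clubsuit,0}(\one) = a + b/\clubsuit$ (which is a correct identity, as the definitions of $\lambda(g)_{\clubsuit,0}$, $\lambda(g)^\sharp$, $\lambda(g)^\flat$ confirm), clear denominators, and subtract the two cleared Chida--Hsieh relations to cancel $(\alpha-\beta)$ and land on $a\bigl(a_p(g)\,a + 2b\bigr) = u\,(a_p(g)-2)\,\mathcal{L}(g)$. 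This is algebraically equivalent but gives a cleaner, more direct path to $\lambda(g)^\flat(\one)$ without passing through $\lambda(g)_1(\one)$ as an intermediate; the paper's route has the side benefit of establishing that $\lambda(g)_1(\one)$ itself is a unit.

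Two small points. First, your worry about $\varpi^{n-1}$ vs.\ $\varpi^n$ precision in the matrix inversion is unfounded: the second row of the $m=1$ matrix relation reads $\Phi_1\,\lambda(g)^\sharp \equiv \xi_0\,\lambda(g)_0 \pmod{\omega_1}$, and since $\omega_1 = X\Phi_1$ one can divide out $\Phi_1$ exactly, giving $\lambda(g)^\sharp(\one) = \lambda(g)_0(\one)$ with no loss. (As you note, this is harmless anyway.) Second, you assert the Cai--Su--Tian factor is a $p$-adic unit; the paper is more cautious, claiming only that $\mathcal{L}(g)$ is a $p$-integral multiple of $g^{\JL}(x_0)^2$, which is all that is needed for the implication $\mathcal{L}(g)$ unit $\Rightarrow$ $g^{\JL}(x_0)$ unit.
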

\begin{proof}

 Let $\alpha$ and $\beta$ be the two roots of the Hecke polynomial of $g$ at $p$. Denote by $\lambda(g)_{\alpha,m}$ and $\lambda(g)_{\beta,m}$ the corresponding theta elements attached to the $p$-stabilizations of $g$ with respect to these two roots, given as in \cite[Definition~4.1]{chidahseihanticycloLvaluescrelle}. 
 
 As in the proof of Lemma~\ref{lem:interpolation}, we have
\[\lambda(g)_{\clubsuit,1}(\one)=\lambda(g)_{\clubsuit,0}(\one)\]
for $\clubsuit\in\{\alpha,\beta\}$.
Since $p$ is assumed to be split in $K$, \cite[Theorem~4.6]{chidahseihanticycloLvaluescrelle} says that
\begin{equation}
    \lambda(g)_{\clubsuit,0}(\one)^2=\lambda(g)_{\clubsuit,0}(\one)^2=u(1-\clubsuit^{-1})^2\cdot \frac{L(g/K,1)}{\Omega_g^{\mathrm{can}} \eta_{g,N^+,N^-S}}\label{eq:CH-interpolation}
\end{equation}
for some $p$-adic unit $u$.

Recall the relations
\begin{align*}
 \alpha^2\lambda(g)_{\alpha,1}-\beta^2\lambda(g)_{\beta,1}&=(\alpha-\beta)\lambda(g)_1,\\
\alpha\lambda(g)_{\alpha,0}-\beta\lambda(g)_{\beta,0}&=(\alpha-\beta)\lambda(g)_0.   
\end{align*}
Put 
$$\fu:=u\cdot \dfrac{L(g/K,1)}{\Omega_g^{\mathrm{can}} \eta_{g,N^+,N^-S}},\ 
\fv_\clubsuit:=\lambda(g)_{\clubsuit,1}(\one).
$$ We then have 
\begin{align*}
   (\alpha-\beta)^2 \lambda(g)_1(\one)^2&=(\alpha^4+\beta^4-2\alpha^3-2\beta^3+\alpha^2+\beta^2)\fu-2\alpha^2\beta^2\fv_\alpha\fv_\beta\\
      (\alpha-\beta)^2 \lambda(g)_0(\one)^2&=(\alpha^2+\beta^2-2\alpha-2\beta+2)\fu-2\alpha\beta\fv_\alpha\fv_\beta.
\end{align*}
Combining the two equations to cancel the term $\fv_\alpha\fv_\beta$ gives
\begin{equation}
\lambda(g)_1(\one)^2-\alpha\beta\lambda(g)_0(\one)^2=(1-2(\alpha+\beta)+\alpha^2+\alpha\beta+\beta^2)\fu. \label{eq:combined}   
\end{equation}

Suppose that $\fu$ is a $p$-adic unit. Since $p$ is split, \cite[\S2.4]{BD-mumford} tells us that $x_0$ is equal to $T_p x_0^0-2x_0^0$ up to a $p$-adic unit. Thus, $g(x_0)$, and hence $\lambda(g)_0(\one)$ are $p$-adic units. \setcounter{footnote}{0}
Here we utilize\footnote{In our setting, the new vector is a test vector, and so the toric period of $g$ explicitly relates to the normalized $L$-value. Note that the test vectors in \cite{cst} and \cite{chidahseihanticycloLvaluescrelle} are identical.} the explicit Waldspurger formula \cite[Theorem~1.2]{cst}, which expresses $$\dfrac{L(g/K,1)}{\Omega_g^{\mathrm{can}} \eta_{g,N^+,N^-S}}$$ as a $p$-unit multiple of the square of $g(x_{0})$: this follows in the same way as the deduction of \cite[Corollary 6.2]{zhang14} from the explicit Waldspurger formula \cite[Theorem 6.1]{zhang14} therein. Since $\alpha$ and $\beta$ have positive $p$-adic valuations, it follows from \eqref{eq:combined} that $\lambda(g)_1(\one)$ is a $p$-adic unit.

The elements $\{\lambda(g)_m\}_{m\ge0}$ satisfy a norm relation analogous to \eqref{eq:norm-7.1}. Thus, if we take $m=1$ in the analogous decomposition to Lemma~\ref{lem:sharpflatpadicL}, we have
\[
\begin{pmatrix}
    a_p(g)&1\\
    -{\Phi_1}&0
\end{pmatrix}\begin{pmatrix}
    \lambda(g)_1^\sharp\\ \lambda(g)_1^\flat
\end{pmatrix}=\begin{pmatrix}
    \lambda(g)_1\\ -\xi_1\lambda(g)_0
\end{pmatrix},
\]
which implies
\begin{align*}
\lambda(g)_1^\sharp(\one)&= \lambda(g)_0(\one),\\  
\lambda(g)_1^\flat(\one)&=\lambda(g)_1(\one)-a_p(g)\lambda(g)_0(\one).
\end{align*}
Thus, if $\lambda(g)_0(\one)$ and $\lambda(g)_1(\one)$ are $p$-adic units, so are $\lambda(g)_1^\sharp(\one)$ and $\lambda(g)_1^\flat(\one)$. This concludes the proof.
\end{proof}

\subsection{Main result}

The general machinery of signed bipartite Euler systems introduced in \S\ref{S:generaltheory} and the proof of Theorem~\ref{thm_bipartite_abstract_main}  in \S\ref{subsec_proof_thm_bipartite_abstract_main} 
carries over 
to the current setting. 
Besides the discussion in \S\ref{pmth}, we simply need to replace the symbols $+$ and $-$  by $\sharp$ and $\flat$, respectively. The fact that we work with a more general coefficient ring does not affect the proofs therein.

As $a_p(f)$ is no longer assumed to vanish, the construction of signed bipartite Euler systems carried out in \S\ref{subsec_8_1_2022_09_27_0906} needs to be modified. More specifically, we replace Proposition~\ref{eqn_prop_DIProp4_3} with the following.

\begin{proposition}
    Let $m,n\ge1$ be integers, $S$ an $n$-admissible prime relative to $f$, and $S'\in\fN_n$ which is core-$n$-admissible and divisible by $S$. There exists a unique element
    $$\begin{pmatrix}
        \kappa(S,n)_m^\sharp\\ \kappa(S,n)_m^\flat
    \end{pmatrix}\in \widehat{H}^1_{S',\emptyset}(K_{m},T_{f,n})^{\oplus 2}/\ker(H_{f,m})\cdot \widehat{H}^1_{S',\emptyset}(K_{m},T_{f,n})^{\oplus 2}$$ 
    such that
      \[
    \begin{pmatrix}
        \kappa(S,n)_m\\ -\res_{K_m/K_{m-1}}  \kappa(S,n)_{m-1}
    \end{pmatrix}
    = H_{f,m}
    \begin{pmatrix}
        \kappa(S,n)_m^\sharp\\ \kappa(S,n)_m^\flat
    \end{pmatrix}.
    \]
    Furthermore, these classes are compatible under the corestriction maps as $m$ varies, resulting in classes $\kappa(S,n)^\sharp,\kappa(S,n)^\flat\in\widehat{H}^1_{S',\emptyset}(K_{\infty},T_{f,n})$.
\end{proposition}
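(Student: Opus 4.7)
The plan is to follow the algebraic template of \cite[Theorem~3.4]{BBL1}, which underlies Lemma~\ref{lem:sharpflatpadicL}, and apply it to the system of Heegner classes $\{\kappa(S,n)_m\}_m$ in place of the theta elements. First I would verify the norm relation appropriate to the present setting where $a_p(f)$ may be nonzero:
\[
\cor_{K_{m+1}/K_m}\kappa(S,n)_{m+1} \;=\; a_p(f)\,\kappa(S,n)_m \;-\; \res_{K_m/K_{m-1}}\kappa(S,n)_{m-1}.
\]
This generalizes \eqref{eqn_Heegner_trace_relation} and is the cohomological counterpart of \eqref{eqn_2023_04_17}. It follows from the standard Hecke relation on CM divisors, namely $T_p\,x_{m+1}^0 = \sum_{\sigma}\sigma\,x_{m+2}^0 + x_m^0$ where $\sigma$ runs over $\Gal(K[p^{m+2}]/K[p^{m+1}])$, after passage to the $f$-isotypic quotient, on which $T_p$ acts as $a_p(f) \equiv a_p(f_S)$ modulo $\varpi^n$.

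Given this norm relation, the construction of the pair proceeds by induction on $m$, using the factorization $H_{f,m} = C_{f,m}\cdots C_{f,1}$ with $\det(C_{f,j}) = \Phi_j$. At level $m$, one solves the linear system
\[
\begin{pmatrix}\kappa(S,n)_m\\ -\res_{K_m/K_{m-1}}\kappa(S,n)_{m-1}\end{pmatrix} \;=\; H_{f,m}\begin{pmatrix}\kappa(S,n)_m^\sharp\\ \kappa(S,n)_m^\flat\end{pmatrix}
\]
in $\widehat{H}^1_{S',\{\}}(K_m, T_{f,n})^{\oplus 2}$. The essential input is the freeness of $H^1_{S',\{\}}(K_m, T_{f,n})$ as a $\Lambda_{m,n}$-module, established in the proof of Proposition~\ref{eqn_prop_DIProp4_3}, which permits one to invert each matrix $C_{f,j}$ modulo the appropriate ideal. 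Uniqueness of the pair modulo $\ker(H_{f,m})\cdot\widehat{H}^1_{S',\{\}}(K_m, T_{f,n})^{\oplus 2}$ is then tautological from the defining equation, mirroring the statement in Lemma~\ref{lem:sharpflatpadicL} where congruences are taken modulo $\omega_m$.

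For compatibility under corestriction as $m$ varies, the plan is to exploit the factorization $H_{f,m+1} = C_{f,m+1}H_{f,m}$: applying $\cor_{K_{m+1}/K_m}$ to the defining equation at level $m+1$ and invoking the Heegner norm relation reproduces the defining equation at level $m$ up to elements of $\ker(H_{f,m})$, so uniqueness forces the corestricted pair to coincide with $(\kappa(S,n)_m^\sharp, \kappa(S,n)_m^\flat)$. The hard part will be ensuring that the recursively constructed signed classes genuinely lie in the Selmer-type module $\widehat{H}^1_{S',\{\}}$ rather than in some larger cohomology group: this is precisely where the freeness of $H^1_{S',\{\}}(K_m, T_{f,n})$ over $\Lambda_{m,n}$, together with the divisibility properties encoded by the cyclotomic polynomials $\Phi_j$, must be combined carefully to show that each inversion step respects the local conditions at primes dividing $S'$ as well as at $p$.
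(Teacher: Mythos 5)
Your proposal follows the same route as the paper, which in turn simply invokes \cite[Theorem~8.1 and Lemma~8.3]{BBL1}; the ingredients you list — the Heegner norm relation with $a_p(f)$ in place of $0$, the freeness of $H^1_{S',\{\}}(K_m,T_{f,n})$ over $\Lambda_{m,n}$, and the factorization $H_{f,m}=C_{f,m}H_{f,m-1}$ — are indeed the ones the cited argument turns on. One point should be tightened, however: the phrase ``invert each matrix $C_{f,j}$ modulo the appropriate ideal'' is misleading, since $\det C_{f,j}=\Phi_j$ is a zero divisor in $\Lambda_{m,n}$ and no inverse exists in any reasonable sense. The mechanism that makes the linear system solvable is not invertibility but a divisibility statement supplied by the norm structure: the second entry $-\res_{K_m/K_{m-1}}\kappa(S,n)_{m-1}$ lies in the $\Gal(K_m/K_{m-1})$-invariants of $H^1_{S',\{\}}(K_m,T_{f,n})$, and because this module is $\Lambda_{m,n}$-free those invariants are exactly $\Phi_m\cdot H^1_{S',\{\}}(K_m,T_{f,n})$. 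This is what produces the first entry of a $C_{f,m}$-preimage (with the ambiguity confined to $\mathrm{Ann}(\Phi_m)$, hence absorbed into $\ker H_{f,m}$), and iterating it through the factorization of $H_{f,m}$ — combined with the norm relation to identify the corestriction of that preimage with the level-$(m-1)$ vector — is what yields both existence and the corestriction-compatibility you assert. You gesture at this in your closing paragraph (``divisibility properties encoded by the cyclotomic polynomials''), so the idea is present, but it is the load-bearing step and should be made explicit rather than phrased as inversion.
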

\begin{proof}
    This follows from the same proof as \cite[Theorem~8.1 and Lemma~8.3]{BBL1}.
\end{proof}
For $\bullet\in\{\sharp,\flat\}$, let $\kappa(S,n)^\bullet_m$ also denote the image of $\kappa(S,n)^\bullet$ under the natural map $$\widehat{H}^1_{S',\emptyset}(K_{\infty},T_{f,n})\lra\widehat{H}^1_{S',\emptyset}(K_{m},T_{f,n}).$$
We are equipped with two families of elements
\[
    \left\{\kappa(S,n)^\bullet:S\in\fN_n^\inde\right\},\quad \left\{\lambda(S,n)^\bullet\in \Lambda_n:S\in\fN_n^\de\right\}\,
    \]
    as before. In order to apply the bipartite Euler system machinery, it suffices to verify the properties \eqref{item_T1}--\eqref{item_T4}. When $a_p(f)\ne0$, the verification of \eqref{item_T1} in \S~\ref{sec_Heegner_local_properties_at_p} does not carry over directly. Instead, we employ the techniques of \cite[\S9.2]{BBL1}.

    Putting all the ingredients together, we can then prove the analogues of Theorem~\ref{thm_bipartite_Heegner_main} and Proposition~\ref{prop:equality} by the same method, leading to the following. 
    
\begin{theorem}\label{thm:appendix}
Assume that $p\geq5$ is split in $K$. Assume also that the hypotheses \eqref{item_GHH}, \eqref{item_BI'}, \eqref{item_Loc'} and \eqref{item_iso} hold, and that the level $N_0$ is square-free. Let $\bullet\in \{\sharp,\flat\}$. 
\begin{itemize}
    \item[i)] We have $\rank_\Lambda\,\Sel_\bullet(K_\infty,A_f)^\vee=1$\,. 
    \item[ii)] 
    We have 
    $${\rm char}_{\LL}\left(H^{1,\bullet}(K,\TT_{f}^\ac)/\LL\cdot\kappa(1)^\bullet \right)^2 = {\rm char}_{\LL}(\Sel_\bullet(K_\infty,A_f)^\vee_{\rm tor})\,.$$
   \end{itemize}
\end{theorem}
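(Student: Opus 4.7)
The plan is to mimic the proofs of Theorem~\ref{thm_bipartite_Heegner_main} and Proposition~\ref{prop:equality}, updating only those steps where the split non-ordinary setting differs from the inert setting, and exploiting the key observation that in the split case both signed theta elements have nonvanishing interpolation at the trivial character (Lemma~\ref{lem:app-interpolation}), in contrast to the inert case of Lemma~\ref{lem:interpolation}.

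First, I would assemble the abstract bipartite Euler system framework. The surjective Coleman maps $\col^\sharp, \col^\flat$ furnish the signed local conditions at $p$, and the resulting Selmer groups $\Sel_\bullet(K_\infty, A_f)$ and compact counterparts $H^{1,\bullet}(K,\TT_f^\ac)$ are defined exactly as in \S\ref{sec:Sel}. The abstract theorem \ref{thm_bipartite_abstract_main}, together with its proof in \S\ref{subsec_proof_thm_bipartite_abstract_main}, carries over verbatim to the $(\sharp,\flat)$-setting with $L$-coefficients in place of $\Qp$-coefficients: the only ingredients used were the surjectivity of the Coleman maps, freeness of the local cohomology modules, and self-dual cartesian propagation of the local conditions — all of which remain true by \cite[Lemma~4.12]{BBL1} and the results recalled in Section~\ref{appA}.

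Second, I would construct the signed bipartite Euler system. On the definite side one has the elements $\lambda(S,n)^\sharp, \lambda(S,n)^\flat \in \Lambda_n$ from Lemma~\ref{lem:sharpflatpadicL}. On the indefinite side one defines signed Heegner classes $\kappa(S,n)^\sharp, \kappa(S,n)^\flat$ via the matrix decomposition in the displayed proposition of Section~\ref{appA}, obtaining classes in $\widehat{H}^1_{S',\{\}}(K_\infty, T_{f,n})$ for any core-$n$-admissible $S' \supseteq S$. One then verifies properties \eqref{item_T1}--\eqref{item_T4}. The reciprocity laws \eqref{item_T2} and \eqref{item_T3} follow from the level-raising formulas of \cite{BertoliniDarmon2005} as in Section~\ref{subsubsec_632_2023_09_19}, combined with the matrix identity of Lemma~\ref{lem:sharpflatpadicL} applied simultaneously on both sides. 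Property \eqref{item_T4} is Mazur's conjecture, established in \cite{burungale2020,cv}. The verification of \eqref{item_T1}, namely that $\res_p \kappa(S,n)^\bullet \in \widehat{H}^{1,\bullet}(K_{\infty,p}, T_{f,n})$, is the main technical point; since $a_p(f)$ need not vanish, the pleasant divisibility argument of Lemma~\ref{lemma_local_conditions_Heeg_inert_case} is unavailable and one must instead adapt the approach of \cite[\S9.2]{BBL1}, pushing the matrix factorization through the signed Coleman maps.

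Third, applying the abstract machinery yields rank $1$ in part (i) and the divisibility
$${\rm char}_{\LL}\left(H^{1,\bullet}(K,\TT_{f}^\ac)/\LL\cdot\kappa(1)^\bullet \right)^2 \subset {\rm char}_{\LL}(\Sel_\bullet(K_\infty,A_f)^\vee_{\rm tor})$$
for $\bullet\in\{\sharp,\flat\}$. To upgrade this to equality, I would verify the criterion in the analogue of Theorem~\ref{thm_bipartite_Heegner_main}(iii) by copying the argument of Proposition~\ref{prop:equality}: for any height-one prime $\fP$, let $r = \dim_{\FF_v}\Sel(K, A_{f,1})$, which is odd by the parity conjecture; by Proposition~\ref{prop_sweeting_lifting} choose $S = q_1 \cdots q_{r-1}$ with $q_i$ admissible and a newform $g_S \in S_2(\Gamma_0(N_0 S))$ congruent to $f$ modulo $\varpi^n$ such that $\Sel(K, A_{g_S}) = 0$. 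The $p$-adic BSD formula proved in \cite{bstw,CCSS,FW21} for semistable $g_S$ then forces $L(g_S/K,1)/\Omega_{g_S}^{\mathrm{can}}\eta_{g_S,N^+,N^-S}$ to be a $p$-adic unit. By Lemma~\ref{lem:app-interpolation}, both $\lambda(S,n)^\sharp(\one)$ and $\lambda(S,n)^\flat(\one)$ are $p$-adic units, so the nonvanishing modulo $(\fP, p^n)$ criterion is satisfied for both signs. The hard part of the argument is really just invoking Lemma~\ref{lem:app-interpolation}: the computation via the explicit Waldspurger formula of \cite{cst} is what makes the split case uniformly cleaner than the inert case, where only the minus theta element could be shown to carry the leading $L$-value and thus only the minus main conjecture came out integrally.
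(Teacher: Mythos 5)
Your proposal is correct and follows essentially the same route as the paper: carry over the abstract bipartite Euler system machinery of \S\ref{S:generaltheory}--\S\ref{subsec_proof_thm_bipartite_abstract_main} with $\sharp,\flat$ in place of $+,-$, construct the signed Heegner classes via the matrix factorization replacing Proposition~\ref{eqn_prop_DIProp4_3}, verify \eqref{item_T1}--\eqref{item_T4} with the $p$-local condition \eqref{item_T1} handled by the method of \cite[\S9.2]{BBL1}, and then establish the equality criterion by the argument of Proposition~\ref{prop:equality} using Proposition~\ref{prop_sweeting_lifting}, the $p$-adic BSD input from \cite{bstw,CCSS,FW21}, and Lemma~\ref{lem:app-interpolation}. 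You correctly identify the key structural difference with the inert case, namely that both $\lambda(S,n)^\sharp(\one)$ and $\lambda(S,n)^\flat(\one)$ are $p$-adic units under the nonvanishing hypothesis, which is what yields the equality for both signs.
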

 This can be considered as a natural generalization of \cite[Theorem~A.1]{CHKLL} to the case where $a_p(f)\ne0$. We have the following application.
\begin{corollary}\label{cor:p-conv}
  Let $f$ be a weight two newform of level $\Gamma_0(N_0)$ satisfying \eqref{item_iso}, $p\geq 5$ a non-ordinary prime and $\varpi$ a uniformiser of the completion of the endomorphism ring of the associated abelian variety $\cA_f$ at a prime above $p$. Let $F$ denote the Hecke field of $f$. 
  Suppose that $N_0$ is square-free, $p$ is coprime to the associated Tamagawa numbers
  and the hypothesis \eqref{item_BI'} holds.  Then, 
  $$
{\rm corank}_{\cO_L}\Sel_{\varpi^\infty}(\cA_f/\QQ)=1\ \implies \ \ord_{s=1}L(\cA_f,s)=[F:\QQ].
$$
\end{corollary}
\begin{proof} We proceed as in the proof Corollary~\ref{p-cv}.
 \end{proof}
 
\begin{remark}\label{rk:lastwords} The signed anticyclotomic Iwasawa theory is not yet developed for primes $p$ inert in $K$ when $a_p(f)\neq 0$. 
    A key missing ingredient 
    is the existence of local points ~\cite{BKO1}. In the near future, we hope to construct  local points in $\cA_f(k_m)$ satisfying the desired  trace relation \cite[\S4.2]{BBL1}. 
\end{remark}

\bibliographystyle{alpha}
\bibliography{references}

\end{document}